\def\rr{{\mathbb R}}
\def\rn{{{\rr}^n}}
\def\cc{{\mathbb C}}
\def\nn{{\mathbb N}}
\def\zz{{\mathbb Z}}
\def\cl{{\mathcal L}}
\def\cm{{\mathcal M}}
\def\cp{\mathcal D}
\def\cq{{\mathcal Q}}
\def\crz{{\mathcal R}}
\def\noz{\nonumber}
\def\fz{\infty}
\def\az{\alpha}
\def\supp{{\mathop\mathrm{\,supp\,}}}
\def\loc{{\mathop\mathrm{\,loc\,}}}
\def\BMO{{\mathop\mathrm{\,BMO\,}}}
\def\lz{\lambda}
\def\dz{\delta}
\def\ez{\epsilon}
\def\kz{{\kappa}}
\def\sz{\sigma}
\def\wz{\widetilde}
\def\ls{\lesssim}
\def\boz{\Omega}
\def\pz{{\prime}}
\def\gfz{\genfrac{}{}{0pt}{}}
\def\hs{\hspace{0.3cm}}
\def\dint{\displaystyle\int}
\def\dfrac{\displaystyle\frac}
\def\r{\right}
\def\lf{\left}
\newtheorem{thm}{Theorem}[section]
\newtheorem{lem}{Lemma}[section]
\newtheorem{prop}{Proposition}[section]
\newtheorem{rem}{Remark}[section]
\newtheorem{defn}{Definition}[section]
\numberwithin{equation}{section}
\begin{document}

\arraycolsep=1pt

\title{{\vspace{-5cm}\small\hfill\bf J. Fourier Anal. Appl. (to appear)}\\
\vspace{4.5cm}\bf\Large Equivalent Characterizations for Boundedness
of Maximal Singular Integrals on $ax+b$\,--Groups
\footnotetext{\hspace{-0.22cm}
The first author was supported by the
Fundamental Research Funds for the Central Universities, and the
Research Funds of Renmin University of China (Grant No. 10XNF090).
The second author is partially supported by PRIN 2007 ``Analisi Armonica".
The third (corresponding) author is supported by the National
Natural Science Foundation (Grant No. 10871025) of China
and Program for Changjiang Scholars and Innovative
Research Team in University of China.
\endgraf $^\ast$Corresponding author.}}
\author{Liguang Liu, Maria Vallarino, and Dachun Yang $^\ast$}
\date{ }
\maketitle

\noindent{\bf Abstract}\quad Let $(S, d, \rho)$ be the affine
group $\mathbb{R}^n \ltimes \mathbb{R}^+$ endowed with the
left-invariant Riemannian metric $d$  and the right Haar measure
$\rho$, which is of exponential growth at infinity. In this paper,
for any linear operator $T$ on $(S, d, \rho)$ associated with a
kernel $K$ satisfying certain integral size condition and
H\"ormander's condition, the authors prove that the following four
statements regarding the corresponding maximal singular integral
$T^\ast$ are equivalent: $T^\ast$ is bounded from  $L_c^\infty$ to
$\mathrm{BMO}$, $T^\ast$ is bounded on $L^p$ for all $p\in(1,
\infty)$, $T^\ast$ is bounded on $L^p$ for some $p\in(1, \infty)$
and $T^\ast$ is bounded from $L^1$ to $L^{1,\,\infty}$. As
applications of these results, for spectral multipliers of a
distinguished Laplacian on $(S, d, \rho)$ satisfying certain
Mihlin-H\"ormander type condition, the authors obtain that their
maximal singular integrals are bounded from $L_c^\infty$ to
$\mathrm{BMO}$, from $L^1$ to $L^{1,\,\infty}$, and on $L^p$ for all
$p\in(1, \infty)$.

\bigskip

\noindent{\bf Keywords} Exponential growth group $\cdot$ Maximal
singular integral $\cdot$ Multiplier $\cdot$ Dyadic cube $\cdot$
(Local) Sharp maximal function $\cdot$ Fefferman--Stein type inequality

\bigskip

\noindent{\bf Mathematics Subject Classification (2010)} Primary 42B20
$\cdot$ Secondary 22E30

\vspace{0.5cm}

\section{Introduction}\label{s1}

Let $S$ be the {\em affine group} $\rn\ltimes \rr^+$
endowed with the {\em product}
$$(x, a)\cdot(x', a')=(x+ax', aa') \qquad\forall\ (x, a), (x', a')\in
S.$$
We call $S$ an $ax+b$\,--{\em group}.
Clearly, $e \equiv  (0, 1)$ is the {\em unit} of $S$. The {\em inverse} of any
$(x, a)\in S$, denoted by $(x, a)^{-1}$, is equal to $(-x/a, 1/a)$.
We endow $S$ with the {\em left-invariant Riemannian metric}
$ds^2 \equiv  a^{-2}(dx_1^2+\cdots+dx_n^2+da^2),$
which induces the following {\em distance function} $d$ on $S\times S$:
\begin{equation}\label{1.1}
d\big((x, a), (x', a')\big)= \cosh^{-1}
\lf(\dfrac{a^2+{a'}^2+|x-x'|^2}{2aa'}\r) \quad\forall\  (x, a), (x',
a')\in S;
\end{equation}
see, for example, \cite[formula (2.18)]{ADY} {or} \cite[pp.\,119--120]{mt}.
The {\em left Haar measure} induced by the
above left-invariant Riemannian metric is $d\lz(x, a)=a^{-n-1}dxda$.
A standard calculation yields that the {\em group modular
function} $\dz(x, a)$ is equal to $a^{-n}$, and hence the right Haar
measure is
\begin{equation}\label{1.2}
d\rho(x, a)=\dz(x, a)^{-1}d\lz(x, a)=a^{-1}dxda.
\end{equation}
Throughout the whole paper, we work on the {\em triple $(S, d, \rho)$},
namely, the group $S$ endowed with the left-invariant Riemannian
metric $d$  and the right Haar measure $\rho$. For all $(x, a)\in S$
and $r>0$, we define the {\em ball} $B\big((x,a), r\big)$ on $S$ by
$$B\big((x,a), r\big) \equiv \lf\{(x', a')\in S:\,d\big((x, a), (x',a') \big)<r\r\}.$$
For any $p\in(0, \fz]$, let $L^p$ be the {\em space} of all
measurable functions $f$ on $(S, d, \rho)$ satisfying
\begin{equation*}
\|f\|_{L^p} \equiv \lf\{\dint_S |f(x)|^p\,d\rho(x)\r\}^{1/p}<\fz\,,
\end{equation*}
with a usual modification made when $p=\fz$, and let $L^{p,\,\fz}$
be the {\em space} of all measurable functions $f$ on $(S, d,
\rho)$ satisfying
\begin{equation*}
\|f\|_{L^{p,\, \fz}}  \equiv \sup_{\az>0}\lf\{\az\lf[\rho\lf(\{x\in
S:\, |f(x)|>\az\}\r)\r]^{1/p}\r\}<\fz.
\end{equation*}
Denote by $L_c^\fz$ the {\em set} of  functions in $L^\fz$
with compact support; and  by $L_{c,0}^\fz$ the {\em set} of
functions $f$ in $L_c^\fz$ such that $\int_S f\,d\rho=0$.
Let $L_\loc^1$ be the {\em set} of locally integrable functions on $S$
(with respect to the measure $\rho$).

It is well known that the space $(S, d, \rho)$ is of {\em exponential
growth at infinity}, namely,
\begin{eqnarray*}
\rho\Big(B\big((0, 1), r\big)\Big)\approx\left\{\begin{array}{ll}
r^{n+1}\quad\quad&\text{if\quad $r<1$},\\
e^{nr}&\text{if\quad $r\ge 1$};
\end{array}\right.
\end{eqnarray*}
see, for example, \cite{mt, va}.
Harmonic analysis on exponential growth
groups including the space $(S, d, \rho)$ currently attracts a lot of
attention. In particular, efforts have been made to study the
Hardy--Littlewood maximal function  (see \cite{gs, gghm}), Riesz transforms (see
\cite{sv, gas, gqs, S99}), spectral multipliers related to a
distinguished Laplacian (see \cite{Astengo, cghm, hs, H98, H97,
mt, va, va2}).

In 2003, Hebisch and Steger \cite{hs} introduced
the notion of {\it Calder\'on--Zygmund spaces},
and developed a variant of the Calder\'on--Zygmund
theory of singular integrals on
Calder\'on--Zygmund spaces, which can be applied to the $ax+b$\,--groups.
Precisely, a {\em space} $M$ endowed with a {\em metric} $d$ and a {\em Borel measure} $\mu$ is said to have the
{\em Calder\'on--Zygmund property} if there exists a positive constant $C$
such that for every $f\in L^1(\mu)$ and each $\alpha>
C\|f\|_{L^1(\mu)}/\mu(M)$, there exist a decomposition $f=g+\sum_{i}b_i$,
sets $\{Q_i\}_i\subset M$, positive numbers $\{r_i\}_{i}$ and points $\{x_i\}_i\subset M$ such that for all $i$,
\begin{enumerate}
\vspace{-0.25cm}
\item[(i)] $\supp b_i\subset Q_i$ and $\int_M b_i\,d\mu=0$,
\vspace{-0.25cm}
\item[(ii)] $\|g\|_{L^\fz(\mu)}\le C\alpha$ and $\sum_i\|b_i\|_{L^1(\mu)}
\le C\|f\|_{L^1(\mu)}$, \vspace{-0.25cm}
\item[(iii)] $Q_i\subset B(x_i, Cr_i)$ and $\sum_i\mu(Q_i^\ast)
\le C\|f\|_{L^1(\mu)}/\alpha$, where $Q_i^\ast\equiv\{x\in M:\, d(x,
Q_i)<r_i\}$; \vspace{-0.7cm}
\end{enumerate}
see \cite[Definition~1.1]{hs}. Hebisch and Steger
\cite[Theorem~1.2]{hs} further proved that, for any Calder\'on--Zygmund space $(M, d,\mu)$, if
$T\equiv\sum_{j\in\zz}K_j$ is bounded on $L^2(\mu)$ and if there
exist positive constants $c\in (0,1)$, $C$, $a$ and $b$ such that
\begin{equation}\label{1.3}
\dint_M|K_j(x, y)|(1+c^jd(x, y))^a\,d\mu(x)\le C
\qquad\forall\, y\in M,
\end{equation}
and
\begin{equation}\label{1.4}
\dint_M|K_j(x, y)-K_j(x, z)|\,d\mu(x)\le C(c^jd(y, z))^b
\qquad\forall\, y,z\in M,
\end{equation}
then $T$ is of weak type $1$ and bounded on $L^p(\mu)$ for all
$p\in(1, 2]$.

Obviously, spaces of homogeneous type in the sense of Coifman and
Weiss \cite{cw2} enjoy the Calder\'on--Zygmund property. An example
of Calder\'on--Zygmund spaces, which is not a space of homogeneous type, is $(S, d, \rho)$
(see \cite[Lemma~5.1]{hs}).
In fact, each integrable function $f$ on  $(S,\rho)$ at any level $\alpha>0$
can be decomposed into a sum $g+\sum_i b_i$ as above, where every
$b_i$ is supported on a set $R_i$ which belongs to a suitable family $\crz$
of sets in $S$: these sets are not balls because of the
exponential growth of the space, but suitable ``rectangles"
in $\mathbb{R}^n \ltimes \mathbb{R}^+ $. More precisely, for any $R\in\crz$, there exists a positive $r_R$ such that $R$ is contained in a ball of radius comparable to $r_R$ and
the measures of $R$ and its dilated set
$R^\ast\equiv\{x\in S:\, d(x, R)<r_R\}$ are comparable. The elements
in $\crz$ are called  {\it Calder\'on--Zygmund sets} on $(S, d,
\rho)$. As an application of the aforementioned Calder\'on--Zygmund
theory of singular integrals, spectral multipliers associated with a
distinguished Laplacian $\Delta$ on $(S, d, \rho)$ which satisfy certain Mihlin-H\"ormander type
condition are of weak type $1$ and bounded on $L^p$ for all $p\in(1, \fz)$ \cite[Theorem~2.4]{hs}.
M\"uller and Thiele \cite{mt} re-obtained the multiplier results of \cite{hs}
by considering estimates of the wave propagator associated with $\Delta$.

A reformulation of Hebisch--Steger \cite[Theorem~1.2]{hs} on $(S,d, \rho)$, using
a condition of H\"ormander's type, is as follows: if  $T$ is a linear operator which
is bounded on $L^2$ and admits a locally integrable kernel $K$
off the diagonal satisfying that
\begin{equation}\label{1.5}
\sup_{R\in\crz}\sup_{y,\, z\in R}\dint_{S\setminus R^\ast} |K(x,
y)-K(x, z)|\,d\rho(x)<\fz,
\end{equation}
then $T$ is bounded from $L^1$ to $L^{1,\, \fz}$ and on
$L^p$ for all $p\in(1, 2]$; see \cite{va2} for the details and see also \cite{st93,
g} for the Euclidean case.
For the endpoint case, Vallarino \cite{va} developed
an $H^1-\BMO$ theory on $(S, d, \rho)$, and proved that singular
integrals whose kernels satisfy the condition \eqref{1.5} are bounded from $H^1$ to
$L^1$ and from $L^\fz$ to $\BMO$. As an application,
spectral multipliers associated with a distinguished Laplacian $\Delta$ which satisfy certain
Mihlin-H\"ormander type condition are bounded from $H^1$ to
$L^1$ and from $L^\fz$ to $\BMO$ (see
\cite[Proposition~4.2]{va}). Moreover, Sj\"ogren and Vallarino \cite{sv} considered $H^1-L^1$
boundedness of various Riesz transforms associated with $\Delta$. In \cite{va3},
the Calder\'on--Zygmund theory of \cite{hs}  is generalized to Damek--Ricci spaces.

In this paper, we study the boundedness of maximal singular integrals on
$(S, d, \rho)$.
The importance of results in this direction is well known, and comes from
the fact that they imply pointwise convergence results (see,
for example, \cite{g} and, in particular, \cite[Theorem 2.1.14]{g}); see also Stein--Weiss \cite[p.\,60, Theorem 3.12]{sw71} or
Stein \cite[p.\,42, Theorem 4]{st70}.
Recall that in the
Euclidean setting, the {\em maximal singular integral} $T^\ast$ associated
with a kernel $K$ is  defined, for all suitable functions $f$ and all $x$ in $\rn$, by
$$T^\ast f(x)=\sup_{\ez>0}\bigg|\dint_{|x-y|>\ez}K(x, y)f(y)\,dy\bigg|.$$
An alternative but equivalent way of expressing this {\em operator}
$T^\ast$ is
$$T^\ast f(x)=\sup_{B\subset \rn,\, B\ni x}
\bigg|\dint_{\rn\setminus 2B}K(x, y)f(y)\,dy\bigg|,$$
where the supremum is taken over all Euclidean balls in $\rn$ containing $x$.

In view of this observation,  in the space $(S, d, \rho)$,
we define the \emph{maximal
singular integral} $T^\ast$ associated with a kernel $K$ as
\begin{equation}\label{1.6}
T^\ast f(x) \equiv \sup_{R\in\crz,\, R\ni x} \bigg|\dint_{S \setminus
R^\ast}K(x, y)f(y)\,d\rho(y)\bigg| \quad\quad\forall\, x\in S,
\end{equation}
where $f\in L_c^\infty$ and the supremum is taken over all
Calder\'on--Zygmund sets $R\in
\crz$ containing $x$; see Section \ref{s4} below for more details.

The main aim of this paper is to prove that, for $T^\ast$, defined as in
\eqref{1.6}, and associated with a kernel $K$ that satisfies an
integral size condition and H\"ormander's condition (see \eqref{4.1}
and \eqref{4.2} below), the following {\em four statements are equivalent}:
\begin{enumerate}
\vspace{-0.25cm}
\item[(i)] $T^\ast$ is bounded from $L_c^\fz$ to  $\BMO$;
\vspace{-0.25cm}
\item[(ii)] $T^\ast$ is bounded on $L^p$ for all $p\in(1, \fz)$;
\vspace{-0.25cm}
\item[(iii)] $T^\ast$ is bounded on $L^p$ for some $p\in(1, \infty)$;
\vspace{-0.25cm}
\item[(iv)] $T^\ast$ is bounded from $L^1$ to $L^{1,\fz}$;
\vspace{-0.25cm}
\end{enumerate}
see Theorem \ref{t4.1} below.
Moreover,  if $T$ is further assumed to be bounded on $L^2$, then the
above boundedness (i) through (iv) hold for $T^\ast$; see Theorem \ref{t4.2} below.

The proof of the main results of the paper, namely,
Theorems \ref{t4.1} and \ref{t4.2},
are presented in Section \ref{s4}.
The main ingredients used in the proof are the
Calder\'on--Zygmund property of $(S, d, \rho)$
and  certain Fefferman--Stein weak type inequalities
related to the local sharp maximal functions in the sense of John
\cite{john}, Str\"omberg \cite{stromberg} and
Jawerth--Torchinsky \cite{jt}; see Propositions \ref{p3.1} and
\ref{p3.2} below.
The proof of the aforementioned Fefferman--Stein type inequalities
relies on the existence of certain ``dyadic" sets on $(S, d, \rho)$,
which are an analogue of the Euclidean dyadic cubes and were constructed in \cite{lvy};
see Lemma \ref{dyadicgrid} below.
We remark that the proof
of Theorem \ref{t4.1} invokes some ideas of \cite{hyy} and Grafakos
\cite{gra}.

Some applications are given in Section \ref{s5}. Precisely,
for certain class of spectral multipliers for the distinguished Laplacian $\Delta$
we prove in Theorem \ref{t5.1} below that the corresponding maximal
singular integral operators are bounded from $L_c^\fz$ to $\BMO$,
from $L^1$ to $L^{1,\fz}$ and  on $L^p$ for all $p\in(1,
\fz)$. The main difficulty in proving Theorem \ref{t5.1} is to show that
the kernels of such spectral multipliers satisfy the integral size condition
\eqref{4.1}, which requires very delicate estimates (see Proposition \ref{p5.1} below).

Our paper is organized as follows. A brief recall of the geometric properties of $S$ and the Calder\'on--Zygmund property is presented in Section \ref{s2}.
In Section \ref{s3}, we establish the Fefferman--Stein (weak)
type inequalities related to
the local sharp maximal functions on $S$. The whole Section
\ref{s4} is devoted to the
proof of Theorems \ref{t4.1} and \ref{t4.2}, which are the main results of the paper.
Finally, an application to the spectral multipliers for the Laplacian
$\Delta$ is studied in Section \ref{s5}.

We make some conventions on notation. Let $\nn \equiv \{0, 1,\,2,\,\cdots\}$ and
$\rr^+ \equiv (0, \fz)$. For any space $X$ and any subset $E$  of $X$, set
$E^\complement \equiv X\setminus E$ and let $\chi_E$ denote the
{\em characteristic function} of $E$. Denote by $C$ a
{\em positive constant} independent of the main parameters involved, which
may vary at different occurrences. {\em Constants with subscripts} do not
change through the whole paper. We use $f\ls g$ to denote $f\le Cg$.
If $f\ls g\ls f$, we write $f\approx g$.
For an operator $T$ defined on the Banach space $\mathcal A$ and taking values in
the Banach space $\mathcal B$, we use $\|T\|_{\mathcal A\to \mathcal B}$ to denote
its {\em operator norm}.

\section{Preliminaries}\label{s2}

We recall the notion of Calder\'on--Zygmund sets,
which appears in \cite{hs} and implicitly in \cite{gs}.
Let $\cq$ be the {\em collection of dyadic cubes in $\rn$}.

\begin{defn}\rm\label{d2.1}
A \emph{Calder\'on--Zygmund set} is a set $R=Q\times[ae^{-r}, ae^r)$, where
$Q\in\cq$ with side length $L$, $a\in\rr^+$, $r>0$ and
\begin{eqnarray*}
&&e^2ar\le L<e^8ar \,\,\, \mbox{ if }\,\,\, r<1;  \quad\quad\quad
ae^{2r}\le L<ae^{8r}\,\,\, \mbox{ if }\,\,\, r\ge1.
\end{eqnarray*}
Set $a_R \equiv  a$, $r_R \equiv  r$ and $x_R \equiv (c_Q, a)$, where
$c_Q$ is the \emph{center} of $Q$. Denote by $\crz$ the \emph{family of all
Calder\'on--Zygmund sets on $S$}.
For any  $x\in S$, let $\crz(x)$  be the \emph{collection of
all $R\in\crz$ containing $x$}.
\end{defn}

The following lemma presents some properties of the Calder\'on--Zygmund sets
(see \cite{hs, va}).

\begin{lem}\label{l2.1}
There exists $\kz_0\in[1, \fz)$ such that for all $R\in\crz$, the
following hold:
\begin{enumerate}
\vspace{-0.2cm}
\item[\rm(i)] $B(x_R, r_R)\subset R\subset B(x_R, \kz_0r_R)$;
\vspace{-0.25cm}
\item[\rm(ii)] $\rho(R^\ast)\le\kz_0\rho(R)$, where
$R^\ast\equiv\{x\in S:\, d(x, R)<r_R\}$ is the dilated set of
$R\in\crz$;
\vspace{-0.25cm}
\item[\rm (iii)] every $R\in\crz$ can be decomposed into
mutually disjoint sets $\{R_i\}_{i=1}^k\subset\crz$ with $k=2$ or
$k=2^n$ such that $R=\cup_{i=1}^k R_i$ and
$\rho(R_i)=\rho(R)/k$ for all $i\in\{1,\cdots, k\}$.
\end{enumerate}
\end{lem}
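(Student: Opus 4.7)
The plan is to prove all three parts directly from the defining structure $R = Q \times [ae^{-r}, ae^r)$, the distance formula \eqref{1.1}, and the density \eqref{1.2} for $\rho$, always splitting into the two regimes $r<1$ (where $L\approx ar$) and $r\ge 1$ (where $L\approx ae^{r}$); the constant $\kz_0$ will be chosen at the end large enough to absorb all implicit constants appearing in these computations.

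For (i), I would take $(x',a')\in R$ and rewrite the argument of $\cosh^{-1}$ in \eqref{1.1} as $\cosh(\log(a'/a)) + |c_Q-x'|^2/(2aa')$. The first summand is bounded by $\cosh r$ because $|\log(a'/a)|<r$; the second is also controlled by a multiple of $\cosh r$, since $|c_Q-x'|\ls L$ and the upper bound $L<e^{8}ar$ (small regime) or $L<ae^{8r}$ (large regime) gives exactly such an estimate for $L^2/(aa')$. Taking $\cosh^{-1}$ yields $R\subset B(x_R,\kz_0 r_R)$. The opposite inclusion $B(x_R,r_R)\subset R$ follows by running the same computation in reverse, now using the lower bounds $L\ge e^{2}ar$ and $L\ge ae^{2r}$ to guarantee that the ball is narrow enough in both directions to fit inside $Q\times[ae^{-r},ae^r)$.

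For (ii), the crude bound $R^{\ast}\subset B(x_R,(\kz_0+1)r_R)$ furnished by (i) is too loose when $r\ge 1$, so I would instead show directly from \eqref{1.1} that $R^{\ast}\subset Q^{\sharp}\times[ae^{-\eta r},ae^{\eta r})$ for some absolute $\eta>1$ and a Euclidean dilate $Q^{\sharp}$ of $Q$ whose side length is still comparable to $L$ (the maximal $x$-spread at any height $b\in[ae^{-\eta r},ae^{\eta r})$ of a point within $r$ of $R$ can, using \eqref{1.1} together with the defining constraint between $L$, $a$ and $r$, be absorbed into a constant multiple of $L$). Then \eqref{1.2} gives $\rho(R^{\ast})\ls L^{n}\cdot r\approx\rho(R)$.

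For (iii), the strategy is a dichotomy on the position of $L$ inside the admissible window. If $L$ is at least twice the lower endpoint (that is, $L\ge 2e^{2}ar$ in the small regime, $L\ge 2ae^{2r}$ in the large regime), I would split $Q$ dyadically into $2^n$ subcubes of side $L/2$ while keeping $a$ and $r$ fixed; the defining inequalities are preserved, giving $k=2^n$ Calder\'on--Zygmund subsets. Otherwise $L$ lies in the lower half of the window, and I would split the interval $[ae^{-r},ae^r)$ at its geometric midpoint $a$, producing $Q\times[ae^{-r},a)$ and $Q\times[a,ae^r)$; rewriting each piece as $Q\times[a'e^{-r'},a'e^{r'})$ with $a'=ae^{\mp r/2}$ and $r'=r/2$, a short case check (handling the crossover $r'=1$) verifies the defining inequalities for the new pair $(a',r')$. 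In either case $\rho(R_i)=\rho(R)/k$ is immediate from \eqref{1.2}. The main obstacle will be the bookkeeping in (iii): confirming, across every sub-regime, that the halved parameters still land in the narrow Calder\'on--Zygmund window, especially near the transitional scale $r\approx 1$.
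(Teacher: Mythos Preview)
The paper does not prove Lemma~\ref{l2.1} at all: it is stated with the attribution ``see \cite{hs, va}'' and no argument is given. Your proposal, by contrast, sketches a direct verification from the definition, and the outline is sound. A few comments on the details you would need to fill in:

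For part~(i), the delicate point is the inclusion $B(x_R,r_R)\subset R$ in the $x$-direction when $r$ is small: the naive bound $|c_Q-x'|^2<2aa'\cosh r$ is far too crude, and you must instead use $\cosh r-1\approx r^2/2$ to extract $|c_Q-x'|\ls ar$, which then matches the lower bound $L\ge e^2 ar$. Your phrase ``running the same computation in reverse'' is a bit vague here; make sure you exploit the subtraction $\cosh r-1$ explicitly.

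For part~(iii), your dichotomy is exactly the construction used in \cite{hs, va}. The transitional case $1\le r<2$, where splitting the interval forces $r'=r/2<1$ and the child rectangles cross from the large-$r$ regime into the small-$r$ regime, does work out numerically, but the specific powers $e^2$ and $e^8$ in Definition~\ref{d2.1} are what make the inequalities close up with room to spare; your bookkeeping remark is apt, and you should carry out at least one such check explicitly.

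In short: your approach is correct and is precisely the content of the cited references; the paper itself simply imports the result.
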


Using the geometric properties of the Calder\'on--Zygmund sets,
the authors, in \cite{lvy}, constructed certain ``dyadic" sets on $(S, d, \rho)$,
which are analogues of the Euclidean.

\begin{lem}\label{dyadicgrid}
There exists a sequence $\{\cp_j\}_{j\in\zz}$ such that each $\cp_j$
consists of pairwise disjoint Calder\'on--Zygmund sets, and
\begin{enumerate}
\vspace{-0.25cm}
\item[\rm (i)] for any given $j\in\zz$,  $S=\cup_{R\in\cp_j} R$;
\vspace{-0.25cm}
\item[\rm (ii)] if $\ell\le k$, $R\in\cp_\ell$ and $R^\pz\in\cp_k$, then
either $R\subset R^\pz$ or $R\cap R^\pz=\emptyset$; \vspace{-0.25cm}
\item[\rm (iii)] for any given $j\in\zz$ and $R\in\cp_j$, there
exists a unique $R^\pz\in\cp_{j+1}$ such that $R\subset R^\pz$ and
$\rho(R^\pz)\le \max\{2^n, 3\}\rho(R)$; \vspace{-0.25cm}
\item[\rm (iv)] for any $j\in\zz$, every $R\in\cp_j$ can be decomposed into
mutually disjoint sets $\{R_i\}_{i=1}^k\subset\cp_{j-1}$ with $k=2$
or $k=2^n$ such that $R=\cup_{i=1}^k R_i$ and
$\rho(R)/2^n \le\rho(R_i)\le 2\rho(R)/3$ for all $i\in\{1,\cdots, k\}$.
\vspace{-0.25cm}
\end{enumerate}
From now on, we set $\cp \equiv \{\cp_j\}_{j\in\zz}$.
\end{lem}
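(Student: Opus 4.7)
The plan is to build $\{\mathcal{P}_j\}_{j\in\zz}$ by induction on $|j|$ from a carefully chosen generation-zero partition $\mathcal{P}_0$, using Lemma~\ref{l2.1} as the main engine. For $\mathcal{P}_0$ I would select a reference CZ set $R_0=Q_0\times[a_0e^{-r_0}, a_0e^{r_0})$ with parameters $(L_0, a_0, r_0)$ lying well inside the admissible range of Definition~\ref{d2.1}, and tile $S$ by congruent translates of $R_0$: partition $\rr^+$ into geometric slabs $I_k=[a_0e^{(2k-1)r_0}, a_0e^{(2k+1)r_0})$ for $k\in\zz$, and in each slab tile $\rn$ by a dyadic grid of cubes of side length compatible with $I_k$. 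The result is a partition of $S$ by CZ sets of uniform ``shape''.

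For $j<0$, define $\mathcal{P}_{j-1}$ from $\mathcal{P}_j$ by subdivision: Lemma~\ref{l2.1}(iii) decomposes each $R\in\mathcal{P}_j$ into $k\in\{2, 2^n\}$ pairwise disjoint CZ children of equal measure $\rho(R)/k$, and $\mathcal{P}_{j-1}$ is the union of all such decompositions. Since $\rho(R)/k\in[\rho(R)/2^n, 2\rho(R)/3]$ and $k\le 2^n$, properties~(i), (iii) and~(iv) for the consecutive pair $(j-1, j)$ hold automatically. For $j>0$ one reverses this by merging CZ sets of $\mathcal{P}_{j-1}$ into larger CZ sets via two admissible operations: (a) merging $2^n$ spatial dyadic siblings into a single dyadic parent with fixed vertical interval, and (b) concatenating two vertically adjacent CZ sets with parameters $(a, r)$ and $(ae^{2r}, r)$ into the single CZ set with parameters $(ae^r, 2r)$, which doubles the measure. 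Interleaving~(a) and~(b) at appropriate scales keeps the parameters in the admissible range of Definition~\ref{d2.1}. Finally, property~(ii) follows by induction on $k-\ell$: siblings of a common parent are disjoint and cover it exactly, so any two sets from different generations are either nested or disjoint.

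The main obstacle is the upward construction, namely verifying that each merge produces a valid CZ set. The vertical merge~(b) yields parameters $(\tilde a, \tilde r)=(ae^r, 2r)$, and for the same side length $L$ to satisfy Definition~\ref{d2.1} one needs $e^2\cdot ae^r\cdot 2r\le L<e^8\cdot ae^r\cdot 2r$ when $2r<1$, which is incompatible with the previous bounds on $L$ unless $L$ is enlarged simultaneously via~(a). Orchestrating this alternation explains both why $\mathcal{P}_0$ must be initialized strictly inside the admissible range and why the ratio in~(iii) is $\max\{2^n, 3\}$ rather than merely~$2^n$: the factor~$3$ accommodates the unbalanced split case $k=2$ with $\rho(R_i)\in\{\rho(R)/3, 2\rho(R)/3\}$ permitted by~(iv), which the merging procedure may need to invoke at the generations where only a vertical, but not a spatial, doubling has been performed.
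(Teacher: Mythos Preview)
The paper does not prove this lemma: it is quoted from the companion paper \cite{lvy}, and the text immediately preceding the statement says so explicitly. So there is no in-paper proof to compare against; your proposal should be measured against the construction in \cite{lvy}, which does proceed by building a reference partition and then refining/coarsening it, much as you describe.

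Your downward step is fine: Lemma~\ref{l2.1}(iii) does the work. Your diagnosis that the upward (merging) direction is the real issue is also correct, and your two primitive moves --- spatial doubling of the dyadic cube and vertical concatenation of adjacent $a$-intervals --- are the right ones. Two points in the write-up need tightening. First, the phrase ``congruent translates of $R_0$'' is misleading: as you yourself note a sentence later, the dyadic side length must vary with the vertical slab $I_k$ to keep each product set inside the admissible range of Definition~\ref{d2.1}, so the level-zero cells are not congruent. This is harmless once stated correctly, but it means $\mathcal P_0$ must be built slab-by-slab with a $k$-dependent dyadic grid, and one must check that adjacent slabs can be merged later despite having different side lengths. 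Second, your explanation of the constant $\max\{2^n,3\}$ via an unbalanced split $\{\rho(R)/3,\,2\rho(R)/3\}$ cannot be right as stated: for $n=1$, where the $3$ actually bites, such a split would violate the lower bound $\rho(R_i)\ge \rho(R)/2^n=\rho(R)/2$ in~(iv); and for $n\ge 2$ the $3$ is dominated by $2^n$ anyway. The constant in~(iii) is simply a uniform bound accommodating both primitive merges across all $n$, not the trace of an unequal bisection. These are cosmetic issues in an otherwise sound outline; the substantive omission is the bookkeeping that shows the interleaved merges keep every intermediate set inside the admissibility window of Definition~\ref{d2.1}, which is exactly what \cite{lvy} carries out.
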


Hardy--Littlewood maximal functions on groups with exponential growth have been
investigated in a series of works; see, for example, \cite{gghm, gs, va1}.
For any $f\in L_\loc^1$, we define the {\em Hardy--Littlewood
maximal function} $\cm f$
and the {{\em dyadic  Hardy--Littlewood maximal function}} $\cm_\cp f$
respectively
by the formulae
\begin{equation}\label{2.1}
\cm f(x) \equiv \sup_{R\in\crz(x)}\dfrac1{\rho(R)}\dint_R|f|\,d\rho
\quad\forall\  x\in S,
\end{equation}
and
\begin{equation}\label{2.2}
\cm_\cp f(x) \equiv \sup_{R\in\crz(x),\,R\in\cp}\dfrac1{\rho(R)}\dint_R|f|\,d\rho
\quad\forall\  x\in S.
\end{equation}
The maximal function $\mathcal M$ has the following boundedness properties \cite{va1}.
\begin{prop}\label{p2.1}
 $\cm$ is bounded from $L^1$ to $L^{1,\fz}$, and on $L^p$ for
all $p\in(1, \fz]$.
\end{prop}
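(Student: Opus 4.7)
The plan is to interpolate between the two endpoints $L^\fz\to L^\fz$ and $L^1\to L^{1,\fz}$ via the Marcinkiewicz theorem. The $L^\fz$ bound is immediate from the definition, since for every $R\in\crz(x)$ we have $\frac{1}{\rho(R)}\int_R|f|\,d\rho\le\|f\|_{L^\fz}$, hence $\cm f(x)\le\|f\|_{L^\fz}$. The real content is therefore the weak-type $(1,1)$ estimate.

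To prove weak-type $(1,1)$, I would use a Vitali-style covering argument tailored to Calder\'on--Zygmund sets. Fix $\alpha>0$ and $f\in L^1$, and let $E_\alpha := \{x\in S:\cm f(x)>\alpha\}$. For each $x\in E_\alpha$ select some $R_x\in\crz(x)$ with $\frac{1}{\rho(R_x)}\int_{R_x}|f|\,d\rho>\alpha$, which in particular forces $\rho(R_x)\le\alpha^{-1}\|f\|_{L^1}$. From $\{R_x\}_{x\in E_\alpha}$ I would then extract a countable, pairwise disjoint subfamily $\{R_i\}$ such that $E_\alpha\subset\bigcup_i R_i^\ast$; together with Lemma \ref{l2.1}(ii), this gives
\[
\rho(E_\alpha)\le\sum_i\rho(R_i^\ast)\le\kz_0\sum_i\rho(R_i)\le\frac{\kz_0}{\alpha}\sum_i\dint_{R_i}|f|\,d\rho\le\frac{\kz_0}{\alpha}\|f\|_{L^1},
\]
which is the desired weak-type inequality.

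The main obstacle is constructing the disjoint subcollection, because $(S,d,\rho)$ is not of homogeneous type and the classical $5r$-covering lemma is not directly available. There are two reasonable ways around this. First, one can exploit the rigid product structure $R=Q\times[ae^{-r},ae^r)$ of Calder\'on--Zygmund sets together with the uniform enlargement bound $\rho(R^\ast)\le\kz_0\rho(R)$ to mimic Vitali's selection ordered by decreasing $r_R$; a direct geometric check then shows that any discarded $R_x$ is contained in $R_i^\ast$ for the selected $R_i$ that forced its discard. Alternatively, one can first establish the weak-type $(1,1)$ inequality for the dyadic maximal function $\cm_\cp$ in \eqref{2.2} via the standard Calder\'on--Zygmund stopping-time argument (which is essentially automatic since the sets in $\cp$ are either nested or disjoint by Lemma \ref{dyadicgrid}(ii)), and then dominate $\cm$ pointwise by a finite sum of maximal functions of dyadic type. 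Either route yields the required estimate, and Marcinkiewicz interpolation with the $L^\fz$ bound then delivers $L^p$ boundedness for all $p\in(1,\fz)$.
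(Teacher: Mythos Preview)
The paper does not actually prove Proposition~\ref{p2.1}; it simply cites \cite{va1}, where the weak-type $(1,1)$ bound for $\cm$ is established by a Vitali-type covering argument adapted to the family $\crz$. Your overall strategy---weak $(1,1)$ plus the trivial $L^\infty$ bound, then Marcinkiewicz---is the right one and is what is done in \cite{va1}.

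There is, however, a real gap in your sketch of the covering step. Your claim that ``any discarded $R_x$ is contained in $R_i^\ast$'' does not follow from Lemma~\ref{l2.1} alone. If $R_x$ meets a selected $R_i$ with $r_{R_x}\le r_{R_i}$, then for $y\in R_x$ one only gets $d(y,R_i)\le\diam R_x\le 2\kz_0 r_{R_x}\le 2\kz_0 r_{R_i}$ from Lemma~\ref{l2.1}(i); since $\kz_0\ge1$, this does \emph{not} place $y$ in $R_i^\ast=\{d(\cdot,R_i)<r_{R_i}\}$. To close the argument you must show that the larger enlargement $\{d(\cdot,R_i)<C r_{R_i}\}$ still has measure $\ls\rho(R_i)$ for the relevant constant $C$. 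This is true, but it requires the explicit product form $R=Q\times[ae^{-r},ae^r)$ and a direct computation with $d\rho=a^{-1}\,dx\,da$; Lemma~\ref{l2.1}(ii) gives the bound only for the single scale $r_R$, and since $(S,d,\rho)$ is not doubling you cannot bootstrap from it. This extra geometric input is exactly what \cite{va1} supplies. Your alternative route---dominating $\cm$ pointwise by finitely many dyadic-type maximal functions---is likewise left unjustified: no decomposition of $\crz$ into finitely many nested grids is established here, and it is not clear one exists in this non-doubling setting.
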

From
\eqref{2.1}, \eqref{2.2} and the differentiation theorem for
integrals, it follows that
$$f(x)\le\cm_\cp f(x)\le\cm f(x) \quad\quad
\forall\  f\in L_\loc^1,\,\mbox{ almost  every}\  x\in S.$$
Thus,  the operator $\cm_\cp$
is bounded from $L^1$ to $L^{1,\infty}$ and for any $p\in(1, \infty]$,
$$\|\cm_\cp(f)\|_{L^p}\approx \|f\|_{L^p} \approx
\|\cm(f)\|_{L^p} \quad \forall\, f\in L^p. $$

It was proved in \cite[Lemma~5.1]{hs} that $(S, d, \rho)$ possesses
the Calder\'on--Zygmund property. Indeed, the boundedness
properties of $\cm_\cp$ and the differentiation theorem for integrals,
together with Lemma \ref{dyadicgrid}  and a
standard stopping-time argument, imply the following dyadic version of
the Calder\'on--Zygmund property; see also \cite[Proposition~2.4]{va}.

\begin{prop}\label{p2.2}
Let $f\in L^p$ with $p\in[1, \fz)$. For any $\az>0$,
there exists a sequence of disjoint sets, $\{R_i\}_i\in\cp$, such
that $f$ can be decomposed into $f=g+\sum_i b_i$, where
\begin{enumerate}
\vspace{-0.25cm}
\item[\rm (i)] $|g(x)|\le C_1\az$ for almost every $x\in S$;
\vspace{-0.25cm}
\item[\rm (ii)] for all $i$, $\supp b_i\subset R_i\in\cp$ and $\int_S b_i\,d\rho=0$;
\vspace{-0.25cm}
\item[\rm (iii)] for all $i$, $\az\le\{\frac1{\rho(R_i)}
\int_{R_i}|f|^p\,d\rho\}^{1/p}\le C_1\az$;
\vspace{-0.25cm}
\item[\rm (iv)] for all $i$, $\|b_i\|_{L^p}\le C_1\az[\rho(R_i)]^{1/p}$,
\vspace{-0.25cm}
\end{enumerate}
where $C_1$ is a positive constant independent of $\az$ and $f$.
\end{prop}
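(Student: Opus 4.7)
The plan is to prove Proposition \ref{p2.2} by a standard stopping-time argument adapted to the dyadic grid $\cp$ from Lemma \ref{dyadicgrid}. First, I would introduce the dyadic $L^p$-maximal function
\begin{equation*}
\cm_{\cp,p}f(x) \equiv \sup_{R\in\cp,\,R\ni x}\lf\{\dfrac1{\rho(R)}\dint_R|f|^p\,d\rho\r\}^{1/p}
\end{equation*}
and observe that $\cm_{\cp,p}f(x)=[\cm_\cp(|f|^p)(x)]^{1/p}$. Since $|f|^p\in L^1$ and $\cm_\cp$ is of weak type $(1,1)$ (as remarked after Proposition \ref{p2.1}), the level set $E_\az \equiv \{x\in S:\,\cm_{\cp,p}f(x)>\az\}$ has finite $\rho$-measure, controlled by $\|f\|_{L^p}^p/\az^p$.

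Next I would perform the stopping-time selection. For each $x\in E_\az$, let $\mathcal F(x)$ be the set of $R\in\cp$ with $R\ni x$ and $\{\frac1{\rho(R)}\int_R|f|^p\,d\rho\}^{1/p}>\az$. Because $f\in L^p$ and the measures of the nested parents in $\cp$ grow to infinity, $\mathcal F(x)$ must be finite; thus each $x\in E_\az$ is contained in a \emph{maximal} element of $\mathcal F(x)$. Using the nesting property (ii) of Lemma \ref{dyadicgrid}, different maximal elements are either equal or disjoint, so we obtain a countable, pairwise disjoint family $\{R_i\}_i\subset\cp$ whose union is exactly $E_\az$. For each $i$, set $f_{R_i} \equiv \frac1{\rho(R_i)}\int_{R_i} f\,d\rho$, and define
\begin{equation*}
b_i \equiv (f-f_{R_i})\chi_{R_i},\qquad g \equiv f\chi_{S\setminus E_\az}+\sum_i f_{R_i}\chi_{R_i},
\end{equation*}
so that (ii) holds by construction.

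The size estimate (iii) is the crucial use of the dyadic structure. The lower bound $\{\frac1{\rho(R_i)}\int_{R_i}|f|^p\,d\rho\}^{1/p}>\az$ holds by selection. For the upper bound, let $R_i^\pz\in\cp$ be the unique parent of $R_i$ provided by Lemma \ref{dyadicgrid}(iii); by the maximality of $R_i$, one has $\{\frac1{\rho(R_i^\pz)}\int_{R_i^\pz}|f|^p\,d\rho\}^{1/p}\le\az$, and since $\rho(R_i^\pz)\le\max\{2^n,3\}\rho(R_i)$, we conclude $\{\frac1{\rho(R_i)}\int_{R_i}|f|^p\,d\rho\}^{1/p}\le[\max\{2^n,3\}]^{1/p}\az$, which gives (iii) with $C_1 \equiv \max\{2^n,3\}$. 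Claim (iv) follows from (iii) by the triangle inequality applied to $b_i=(f-f_{R_i})\chi_{R_i}$ together with H\"older's inequality to bound $|f_{R_i}|$.

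Finally, I would verify (i) by treating $E_\az$ and its complement separately. On $R_i$, $|g|=|f_{R_i}|$, which is bounded by the $L^p$-average just controlled, hence by $C_1\az$. On $S\setminus E_\az$, every $x$ lies in a strictly decreasing nested sequence of sets in $\cp$ (using Lemma \ref{dyadicgrid}(iv) to shrink) none of which belong to $\mathcal F(x)$, so the $L^p$-averages of $|f|$ over these sets never exceed $\az$; by the differentiation theorem for integrals adapted to the grid $\cp$ (which holds as recalled before Proposition \ref{p2.2}), $|f(x)|\le\az$ for almost every such $x$, yielding $|g(x)|\le\az$ there. The main obstacle is simply making the stopping-time argument precise with respect to the grid $\cp$ rather than Euclidean dyadic cubes; once one has Lemma \ref{dyadicgrid}, in particular the nesting (ii) and the controlled parent (iii), every step of the classical Calder\'on--Zygmund decomposition carries over verbatim.
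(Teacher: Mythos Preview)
Your proposal is correct and follows precisely the approach the paper indicates: the paper does not give a detailed proof of Proposition~\ref{p2.2} but simply states that it follows from the boundedness properties of $\cm_\cp$, the differentiation theorem for integrals, Lemma~\ref{dyadicgrid}, and a standard stopping-time argument, which is exactly what you carry out. Your use of Lemma~\ref{dyadicgrid}(iii) for the upper bound in (iii), of the growth of parent measures (which follows from Lemma~\ref{dyadicgrid}(iv)) to guarantee the existence of maximal stopping sets, and of the differentiation theorem on $S\setminus E_\az$ are all in line with the paper's sketch.
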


\begin{rem}\rm\label{r2.1}
If $f\in L^\fz\cap L^p$ for some $p\in [1,\fz)$ and $f=g+\sum_i b_i$ is a
Calder\'on--Zygmund decomposition of $f$ obtained as in Proposition \ref{p2.2},
then $\|g\|_{L^\fz}\le \|f\|_{L^\fz}$.
Moreover, if $\int_Sf\,d\rho=0$, then $\int_S g\,d\rho=0$.
 \end{rem}

\section{Fefferman--Stein Type Inequalities}\label{s3}

The local maximal functions in  Euclidean
spaces were introduced by John \cite{john} and later investigated by
Str\"omberg \cite{stromberg}, Jawerth--Torchinsky \cite{jt} and Lerner
\cite{lerner}; see also \cite{hyy} for
the setting of spaces of homogeneous type.
Following this pioneering work, we introduce the local
maximal functions on $(S, d, \rho)$.

\begin{defn}\rm\label{d3.1}
(i) The \emph{non-increasing rearrangement} of a measurable function $f$ on
$(S, d, \rho)$ is defined by
$$f^\ast(\xi)  \equiv  \inf\{t>0:\, \rho(\{x\in S:\, |f(x)|>t\})<\xi\}
\quad\forall\ \xi\in(0, \fz).$$

(ii) Let $s\in(0, 1)$ and $\cp$ be the family of dyadic sets. For any
$f\in L_\loc^1$, its \emph{local maximal function}
$\cm_{0, s} f$ is defined by
$$\cm_{0, s} f(x) \equiv
\sup_{R\in\crz(x)} \lf(f\chi_R\r)^\ast\big(s\rho(R)\big)
\quad\forall\ x\in S,$$
and its \emph{dyadic local maximal function} $\cm_{0, s}^\cp f$ is defined by
$$\cm_{0, s}^{\cp} f(x) \equiv
\sup_{R\in\crz(x)\cap \cp}\lf(f\chi_R\r)^\ast\big(s\rho(R)\big)
\quad\forall\ x\in S.$$
\end{defn}

Some properties of these local maximal operators are presented in
the following lemma.

\begin{lem}\label{l3.1}
 For any $f\in L_\loc^1$, the following hold:
\begin{enumerate}
\vspace{-0.25cm}
\item[\rm(i)] for all $s\in(0, 1)$ and $x\in S$,
$\cm_{0, s}^{\cp} f(x)\le \cm_{0, s} f(x)$;
\vspace{-0.25cm}
\item[\rm(ii)] if $0<s_1<s_2<1$, then $\cm_{0, s_2}^\cp f(x)
\le \cm_{0, s_1}^\cp f(x)$ for all $x\in S$;
\vspace{-0.25cm}
\item[\rm(iii)] for all $s\in(0, 1)$ and $x\in S$,
$\cm_{0, s}^{\cp} f(x)\le s^{-1}\cm_\cp f(x)$;
\vspace{-0.25cm}
\item[\rm(iv)] for all $s\in(0, 1)$ and all
measurable functions $f_1$ and $f_2$,
$$\cm_{0, s}^{\cp}(f_1+f_2)(x)\le \cm_{0, s/2}^\cp(f_1)(x)+\cm_{0, s/2}^\cp (f_2)(x)
\qquad\forall\, x\in S.$$
\vspace{-0.8cm}
\item[\rm(v)] Properties (ii)--(iv) hold with
$\cm_{0, s}^{\cp}$ and $\cm_\cp$ replaced by $ \cm_{0, s}$ and $\cm$,
respectively.
\vspace{-0.25cm}
\item[\rm(vi)] For all $s\in(0, 1)$, $\lz\in(0, \fz)$ and $f\in L_\loc^1$,
\begin{eqnarray*}
\{x\in S:\, \cm_\cp(\chi_{\{|f|>\lz\}})(x)>s\}
&&\subset \{x\in S:\, \cm_{0, s}^{\cp} f(x)>\lz\}\\
&&\subset \{x\in S:\, \cm_\cp(\chi_{\{|f|>\lz\}})(x)\ge s\}.
\vspace{-0.25cm}
\end{eqnarray*}
Consequently,
\begin{eqnarray*}
\rho(\{x\in S:\, |f(x)|>\lz\})
&&\le \rho(\{x\in S:\, \cm_{0, s}^{\cp} f(x)>\lz\})\\
&&\le \|\cm_\cp\|_{L^1\to L^{1,\fz}} s^{-1} \rho(\{x\in S:\,
|f(x)|>\lz\}).
\end{eqnarray*}
\vspace{-0.8cm}
\item[\rm(vii)] Similarly,
\begin{eqnarray*}
\{x\in S:\, \cm(\chi_{\{|f|>\lz\}})(x)>s\}
&&\subset \{x\in S:\, \cm_{0, s} f(x)>\lz\}\\
&&\subset \{x\in S:\, \cm(\chi_{\{|f|>\lz\}})(x)\ge s\},
\vspace{-0.25cm}
\end{eqnarray*}
and
\begin{eqnarray*}
\rho(\{x\in S:\, |f(x)|>\lz\})
&&\le \rho(\{x\in S:\, \cm_{0, s} f(x)>\lz\})\\
&&\le \|\cm\|_{L^1\to L^{1,\fz}} s^{-1} \rho(\{x\in S:\,
|f(x)|>\lz\}).
\end{eqnarray*}
\vspace{-0.5cm}
\end{enumerate}
\end{lem}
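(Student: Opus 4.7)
Parts (i)--(v) reduce to elementary manipulations. Part (i) holds because $\crz(x)\cap\cp\subset\crz(x)$, so the supremum defining $\cm_{0,s}^\cp f(x)$ is over a subfamily of that defining $\cm_{0,s}f(x)$. Part (ii) is the monotonicity of the non-increasing rearrangement in its argument. For (iii), apply the Chebyshev-type bound $\xi\cdot g^\ast(\xi)\le\int_S|g|\,d\rho$ with $g=f\chi_R$ and $\xi=s\rho(R)$; dividing by $\rho(R)$ gives $(f\chi_R)^\ast(s\rho(R))\le s^{-1}\rho(R)^{-1}\int_R|f|\,d\rho\le s^{-1}\cm_\cp f(x)$, and take the supremum over $R$. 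For (iv), the standard subadditivity $(h_1+h_2)^\ast(t_1+t_2)\le h_1^\ast(t_1)+h_2^\ast(t_2)$, with $t_1=t_2=s\rho(R)/2$ and $h_i=f_i\chi_R$, yields the pointwise bound. Part (v) is obtained by repeating these arguments with $\crz(x)\cap\cp$ replaced by $\crz(x)$ and $\cm_\cp$ by $\cm$.

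For (vi) and (vii), the engine is the pair of one-sided implications between the non-increasing rearrangement and the distribution function $\rho_{|g|}(t)\equiv\rho(\{|g|>t\})$ of a measurable $g$:
\[
\rho_{|g|}(\lz)>\xi\ \Longrightarrow\ g^\ast(\xi)>\lz,\qquad g^\ast(\xi)>\lz\ \Longrightarrow\ \rho_{|g|}(\lz)\ge\xi.
\]
The first implication exploits right-continuity of $\rho_{|g|}$, which gives $\rho_{|g|}(\lz+\delta)\ge\xi$ for some $\delta>0$, placing $[\,0,\lz+\delta\,]$ outside $\{t:\rho_{|g|}(t)<\xi\}$; the second is immediate from the definition of the rearrangement. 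Applying these with $g=f\chi_R$ and $\xi=s\rho(R)$ and taking unions over $R\in\crz(x)\cap\cp$ produces exactly the sandwich of set inclusions asserted in (vi); part (vii) follows identically by replacing $\crz(x)\cap\cp$ with $\crz(x)$ and $\cm_\cp$ with $\cm$.

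From these set inclusions the two measure estimates drop out. For the lower bound $\rho(\{|f|>\lz\})\le\rho(\{\cm_{0,s}^\cp f>\lz\})$, the Lebesgue differentiation theorem for the dyadic family $\cp$ (a standard consequence of the weak-type bound in Proposition~\ref{p2.1}) shows that $\cm_\cp(\chi_{\{|f|>\lz\}})(x)=1>s$ at almost every $x\in\{|f|>\lz\}$, so by the first set inclusion such $x$ lies in $\{\cm_{0,s}^\cp f>\lz\}$. For the upper bound, invoke the weak-type $(1,1)$ boundedness of $\cm_\cp$ from Proposition~\ref{p2.1} applied to $\chi_{\{|f|>\lz\}}$; the mild obstacle is that the second inclusion involves $\ge s$ rather than $>s$, which is circumvented by writing
\[
\{x\in S:\cm_\cp(\chi_{\{|f|>\lz\}})(x)\ge s\}=\bigcap_{s'<s}\{x\in S:\cm_\cp(\chi_{\{|f|>\lz\}})(x)>s'\}
\]
and letting $s'\uparrow s$ after applying the weak-type bound at each $s'$. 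Part (vii) is identical, using $\cm$ instead of $\cm_\cp$. The only real point of care throughout is this bookkeeping of strict versus non-strict inequalities, handled on one side by right-continuity of the distribution function and on the other by the limit $s'\uparrow s$.
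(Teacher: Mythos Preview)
Your proof is correct and follows essentially the same approach as the paper's: parts (i)--(iii) are handled identically via the definition and a Chebyshev estimate, while for (iv), (vi), and (vii) the paper simply cites \cite{hyy} (Lemmas~2.2 and~2.3), and what you have written is precisely the standard rearrangement-and-distribution-function argument one finds there. Your explicit bookkeeping of strict versus non-strict inequalities (right-continuity of $\rho_{|g|}$ on one side, the limit $s'\uparrow s$ on the other) is exactly the care the cited arguments require.
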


\begin{proof}
Properties (i) and (ii) follow from the
definition of $\cm_{0, s}^{\cp}$. For all $R\in \crz(x)\cap \cp$,
\begin{eqnarray*}
\rho(\{y\in R:\, |f(y)|> s^{-1}\cm_\cp f(x)\}) &&< \int_R\frac{|f(y)|
}{s^{-1}\cm_\cp f(x)}\,d\rho(y)\le s\rho(R),
\end{eqnarray*}
therefore, (iii) holds. Property (iv) follows from an argument similar
to the one used in \cite[Lemmas~2.2]{hyy}, while (v)
is proven as (i)-(iv). Finally, proceeding as in the  proof of
\cite[Lemmas~2.3]{hyy} yields (vi) and (vii).
\end{proof}

Next, we recall the notion of the median value;
see  \cite{stromberg} for the
Euclidean setting.

\begin{defn}\rm\label{d3.2}
Suppose that $f$ is a real function in $L^1_{loc}$ and
$R\in\crz$. A \emph{median value} $m_f(R)$ of $f$ over $R$ is defined to
be one of the real numbers satisfying
$$\rho(\{x\in R:\, f(x)>m_f(R)\})\le\rho(R)/2,$$
and
$\rho(\{x\in R:\, f(x)<m_f(R)\})\le\rho(R)/2.$
In the case when $f$ is complex, define
$$m_f(R)\equiv  m_{\Re(f)}(R)+i m_{\Im(f)}(R),$$
where $\Re(f)$ and $\Im (f)$ denote, respectively, the \emph{real part}
and the \emph{imaginary part} of $f$.
\end{defn}

In the following lemma we show an analogue of the inequality proved by
Jawerth and  Torchinsky in \cite[p.\,238]{jt}, which
will be used in the proof of ``good-$\lz$" inequalities in Lemma \ref{l3.5} below.

\begin{lem}\label{l3.2}
Let $f\in L_\loc^1$ and $R\in\crz$. Then
\begin{equation}\label{3.1}
|m_{f}(R)|\le\sqrt 2\inf\big\{t>0:\, \rho(\{y\in R:\,
|f(y)|>t\})<\rho(R)/2\big\}.
\end{equation}
Consequently,  for all $s\in(0,1/2]$ and $R\in\cp$,
\begin{equation}\label{3.2}
|m_{f}(R)|\le\sqrt 2\inf_{x\in R}\cm_{0, 1/2}^\cp f(x) \le \sqrt
2\inf_{x\in R}\cm_{0, s}^{\cp} f(x).
\end{equation}
\end{lem}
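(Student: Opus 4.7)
The plan is to reduce the real-valued case of \eqref{3.1} to a direct comparison of the super/sublevel sets defining $m_f(R)$ with those defining the right-hand side, and then to recover the factor $\sqrt{2}$ for complex-valued $f$ by Pythagoras in $\mathbb{C}\cong\mathbb{R}^2$. Inequality \eqref{3.2} will then follow from the identification of the infimum in \eqref{3.1} with $(f\chi_R)^\ast(\rho(R)/2)$ and the monotonicity property Lemma \ref{l3.1}(ii).

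More precisely, let $t_0$ denote the infimum on the right-hand side of \eqref{3.1}. First I would treat the case where $f$ is real-valued. For any $t>t_0$ one has $\rho(\{y\in R:|f(y)|>t\})<\rho(R)/2$, equivalently
\[
\rho(\{y\in R:\,-t\le f(y)\le t\})>\rho(R)/2.
\]
Suppose toward a contradiction that $m_f(R)>t$; then $\{y\in R: f(y)\le t\}\subset \{y\in R: f(y)<m_f(R)\}$, and the second set has measure at most $\rho(R)/2$ by definition of the median, contradicting the displayed inequality. The symmetric argument, using $\{y\in R: f(y)\ge -t\}\subset\{y\in R: f(y)>m_f(R)\}$ when $m_f(R)<-t$, rules out $m_f(R)<-t$. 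Hence $|m_f(R)|\le t$ for every $t>t_0$, so $|m_f(R)|\le t_0$ in the real case (with constant $1$, not $\sqrt 2$).

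Next, for complex-valued $f$, both $|\Re f|$ and $|\Im f|$ are pointwise bounded by $|f|$, so $\{y\in R:|\Re f(y)|>t\}\cup\{y\in R:|\Im f(y)|>t\}\subset \{y\in R:|f(y)|>t\}$, and the analogous infima for $\Re f$ and $\Im f$ are each $\le t_0$. Applying the real case to $\Re f$ and $\Im f$ gives $|m_{\Re f}(R)|\le t_0$ and $|m_{\Im f}(R)|\le t_0$, hence
\[
|m_f(R)|^2=|m_{\Re f}(R)|^2+|m_{\Im f}(R)|^2\le 2t_0^2,
\]
which is exactly \eqref{3.1}.

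For \eqref{3.2}, I would observe that by the definition of the non-increasing rearrangement in Definition \ref{d3.1}(i),
\[
(f\chi_R)^\ast(\rho(R)/2)=\inf\{t>0:\rho(\{y\in R:|f(y)|>t\})<\rho(R)/2\}=t_0.
\]
When $R\in\cp$ and $x\in R$, one has $R\in\crz(x)\cap\cp$, so the definition of $\cm_{0,1/2}^{\cp}$ yields $\cm_{0,1/2}^{\cp}f(x)\ge (f\chi_R)^\ast(\rho(R)/2)=t_0$. Taking the infimum over $x\in R$ and combining with \eqref{3.1} gives $|m_f(R)|\le\sqrt 2\,\inf_{x\in R}\cm_{0,1/2}^{\cp}f(x)$. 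The second inequality in \eqref{3.2} for $s\in(0,1/2]$ is immediate from the monotonicity property Lemma \ref{l3.1}(ii), which gives $\cm_{0,1/2}^{\cp}f\le\cm_{0,s}^{\cp}f$ pointwise. I do not foresee a main obstacle here; the only mildly delicate point is keeping the strict inequality $<\rho(R)/2$ in the definition of $t_0$ aligned with the non-strict inequalities $\le\rho(R)/2$ in the definition of the median, which is handled by passing to $t>t_0$ before taking the infimum.
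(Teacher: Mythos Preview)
Your proof is correct and follows essentially the same approach as the paper: the real-valued case is reduced to a direct comparison between the median's defining level sets and those of the right-hand infimum (the paper shows $t_0\ge m_f(R)$ via $t=m_f(R)-\varepsilon$, while you argue the contrapositive $|m_f(R)|\le t$ for $t>t_0$), and the complex case is recovered via Pythagoras and $|\Re f|,|\Im f|\le |f|$. The derivation of \eqref{3.2} from \eqref{3.1} and Lemma~\ref{l3.1}(ii) is likewise the same as the paper's.
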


\begin{proof}
{We first} show \eqref{3.1}. If
$f$ is real and $m_f(R)>0$, then for all $\ez\in\big(0, m_f(R)\big)$,
$$\rho(\{y\in R:\, |f(y)|> m_{f}(R)-\ez\})
\ge\rho(\{y\in R:\, f(y)\ge m_{f}(R)\})\ge\rho(R)/2,$$
which implies that
$$\inf\big\{t>0:\, \rho(\{y\in R:\, |f(y)|>t\})<\rho(R)/2\big\}\ge m_{f}(R)-\ez.$$
Letting $\ez\to0$ yields
\begin{equation}\label{3.3}
\inf\big\{t>0:\, \rho(\{y\in R:\, |f(y)|>t\})<\rho(R)/2\big\}\ge m_{f}(R).
\end{equation}
If $f$ is real and $m_f(R)<0$, applying the above argument to
$-f$ and $-m_f(R)$, we also obtain \eqref{3.3}.

For any complex
function $f$, we have
$$|m_f(R)|^2=|m_{\Re f}(R)|^2+|m_{\Im f}(R)|^2
\le 2\max\{|m_{\Re f}(R)|^2,\, |m_{\Im f}(R)|^2\}.$$
Without loss of generality, we may assume that $|m_{\Re f}(R)|\le
|m_{\Im f}(R)|$. Then, by \eqref{3.3},
\begin{eqnarray*}
|m_f(R)|\le\sqrt 2|m_{\Im f}(R)|
&&\le \sqrt 2 \inf\big\{t>0:\, \rho(\{y\in R:\, |\Im f(y)|>t\})<\rho(R)/2\big\}\\
&&\le \sqrt 2 \inf\big\{t>0:\, \rho(\{y\in R:\, |f(y)|>t\})<\rho(R)/2\big\}.
\end{eqnarray*}
This proves \eqref{3.1}. The inequalities \eqref{3.2} easily follow
from the definition of $\cm_{0, 1/2}^\cp  $ and Lemma \ref{l3.1}(ii).
\end{proof}

Vallarino \cite{va} introduced the {\em space} $\BMO$ of functions
with bounded mean oscillation on $(S, d, \rho)$  as follows.
For any $f\in L_\loc^1$ and $R\in\crz$, set
$f_R \equiv \frac1{\rho(R)}\int_R f\,d\rho$;
the function
$f$ is said to be in $\BMO$ if
\begin{equation*}
 \|f\|_{\BMO} \equiv \sup_{R\in\crz}
\dfrac1{\rho(R)}\dint_R|f-f_R|\,d\rho<\fz.
\end{equation*}
For any $q\in(0, \fz)$ and $f\in L_\loc^1$, set
$$ \|f\|_{\BMO_q} \equiv \sup_{R\in\crz}
\lf\{\dfrac1{\rho(R)}\dint_R|f-f_R|^q\,d\rho\r\}^{1/q}.$$
It was proved in \cite[Section~3]{va} that for any $q\in(1, \fz)$,
there exists a positive constant $C_q$ such that for all $f\in L_\loc^1$,
\begin{equation}\label{3.4}
\dfrac1{C_q}\|f\|_{\BMO_q}\le \|f\|_{\BMO}\le C_q\|f\|_{\BMO_q}.
\end{equation}
It turns out that \eqref{3.4} also holds for $q\in(0, 1)$.
This is proved in the following lemma,
using some ideas of \cite{ly84}.

\begin{lem}\label{l3.3}
For any $\sz\in(0, 1)$ there exists a positive constant $C_\sz$,
which depends only on $\sz$, such that for all $f\in L_\loc^1$,
\begin{equation}\label{3.5}
 C_\sz\|f\|_{\BMO}\le \|f\|_{\ast,\,\sz}
 \le \|f\|_{\BMO_\sz} \le\|f\|_{\BMO},
\end{equation}
where
$$\|f\|_{\ast,\,\sz} \equiv \sup_{R\in\crz}\inf_{c\in\cc}
 \lf\{\dfrac1{\rho(R)}\dint_R|f(x)-c|^\sz\,d\rho(x)\r\}^{1/\sz}.$$
\end{lem}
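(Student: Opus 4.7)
The two rightmost inequalities in \eqref{3.5} are straightforward. For $\|f\|_{\BMO_\sz} \le \|f\|_{\BMO}$, applying Jensen's inequality to the concave function $t \mapsto t^\sz$ on $[0,\fz)$ yields
$$\dfrac{1}{\rho(R)}\dint_R |f - f_R|^\sz \, d\rho \le \lf(\dfrac{1}{\rho(R)}\dint_R |f - f_R| \, d\rho\r)^\sz \le \|f\|_{\BMO}^\sz,$$
and taking the $1/\sz$-th power and the supremum over $R \in \crz$ gives the result. For $\|f\|_{\ast,\,\sz} \le \|f\|_{\BMO_\sz}$, we simply pick $c=f_R$ in the infimum defining $\|f\|_{\ast,\,\sz}$.

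The main task is the left inequality $C_\sz \|f\|_{\BMO} \le \|f\|_{\ast,\,\sz}$. Set $N \equiv \|f\|_{\ast,\,\sz}$ (which we may assume finite), fix $R_0 \in \crz$, and choose $c_0 \in \cc$ with $\{\frac{1}{\rho(R_0)}\int_{R_0} |f - c_0|^\sz \,d\rho\}^{1/\sz} \le 2N$. Since $\frac{1}{\rho(R_0)}\int_{R_0} |f - f_{R_0}| \, d\rho \le 2\cdot\frac{1}{\rho(R_0)}\int_{R_0}|f - c_0| \, d\rho$, it suffices to prove $\frac{1}{\rho(R_0)}\int_{R_0} |f - c_0| \, d\rho \ls N$ with an implicit constant depending only on $\sz$. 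Following the strategy suggested by \cite{ly84}, the plan is to establish exponential decay of the distribution function,
$$\rho\lf(\{x \in R_0 : |f(x) - c_0| > tN\}\r) \ls e^{-ct}\rho(R_0) \qquad \forall\, t > 0,$$
from which the desired $L^1$ bound follows by integrating in $t$.

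To get the decay, we perform a John--Nirenberg type iteration inside $R_0$ using the dyadic grid $\cp$ from Lemma \ref{dyadicgrid}. At the first step, we select the maximal sets $\{R_i^{(1)}\} \subset \cp$ contained in $R_0$ on which $\frac{1}{\rho(R_i^{(1)})}\int_{R_i^{(1)}} |f - c_0|^\sz \, d\rho > (AN)^\sz$, where $A \ge 1$ is a large constant to be fixed. Chebyshev's inequality gives $\sum_i \rho(R_i^{(1)}) \le (2/A)^\sz\rho(R_0) \le \frac{1}{2}\rho(R_0)$ for $A$ large enough; the maximality together with Lemma \ref{dyadicgrid}(iii) ensures that on each $R_i^{(1)}$ the $\sz$-average of $|f-c_0|^\sz$ is $\ls (AN)^\sz$, while outside $\bigcup_i R_i^{(1)}$ the Lebesgue differentiation theorem gives $|f - c_0| \ls AN$ almost everywhere. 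On each $R_i^{(1)}$ we choose a new near-minimizer $c_i^{(1)}$ associated with that set; the key step is the $\sz$-subadditivity $|a+b|^\sz \le |a|^\sz + |b|^\sz$ valid for $\sz\in(0,1)$, which substitutes for the triangle inequality and allows us to conclude $|c_i^{(1)} - c_0| \ls N$. Repeating the selection $k$ times yields a nested collection whose total measure is $\le 2^{-k}\rho(R_0)$, while on its complement in $R_0$ one has $|f - c_0| \ls kN$ by telescoping the successive near-minimizer constants.

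The main obstacle lies in the careful bookkeeping of this iteration: one must verify that the $\sz$-subadditivity, combined with the bounded parent-to-child ratio $\rho(\widetilde R)/\rho(R) \le \max\{2^n, 3\}$ in $\cp$ from Lemma \ref{dyadicgrid}(iii), produces constants uniform in the iteration depth, so that the decay rate $c > 0$ depends only on $\sz$ (and not on $R_0$ or $f$). Once this exponential decay is in hand, integrating the distribution function over $t \in (0,\fz)$ gives $\frac{1}{\rho(R_0)}\int_{R_0}|f - c_0| \, d\rho \le C_\sz N$, and taking the supremum over $R_0 \in \crz$ completes the proof of \eqref{3.5}.
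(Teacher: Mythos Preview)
Your approach is correct in spirit but takes a genuinely different route from the paper, and there is one technical point you should adjust. You run a direct John--Nirenberg iteration, using $\sz$-subadditivity $|a+b|^\sz\le |a|^\sz+|b|^\sz$ in place of the triangle inequality to control the drift of the near-minimizing constants, and then integrate the resulting exponential distributional decay. The paper instead bootstraps from the already-established equivalence \eqref{3.4} for exponents $q>1$: fixing $R$ and a minimizer $A_R$, it observes that the scalar function $g\equiv |f-A_R|^\sz$ belongs to $\BMO$ with $\|g\|_{\BMO}\le 2\|f\|_{\ast,\sz}^\sz$ (via the elementary inequality $|a^\sz-b^\sz|\le|a-b|^\sz$), then writes $|f-A_R|=g^{1/\sz}$ and applies \eqref{3.4} with $q=1/\sz>1$ to $g$. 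This avoids re-running the stopping-time machinery and yields the result in a few lines; your argument is more self-contained but essentially re-proves John--Nirenberg in the $\sz<1$ regime.

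The technical point: you propose to select maximal sets in the global dyadic grid $\cp$ contained in $R_0$, but an arbitrary $R_0\in\crz$ need not belong to $\cp$ and is not in general a union of elements of $\cp$, so the stopping-time selection inside $R_0$ is not well posed as stated. The fix is to run the iteration using Lemma~\ref{l2.1}(iii) instead: every $R\in\crz$ splits into $2$ or $2^n$ disjoint children in $\crz$ of equal measure, and iterating this produces a local ``dyadic'' tree rooted at $R_0$ with parent-to-child measure ratio at most $2^n$. All of your estimates (the Chebyshev bound $\sum_i\rho(R_i^{(1)})\le(2/A)^\sz\rho(R_0)$, the control of the stopped averages, the bound $|c_i^{(1)}-c_0|\ls N$, and the differentiation step on the complement) go through verbatim with this local structure in place of $\cp$.
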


\begin{proof}
The third inequality of \eqref{3.5} follows from H\"older's
inequality, while the second inequality of \eqref{3.5} follows
from the definitions of $\|\cdot\|_{\ast,\,\sz}$ and
$\|\cdot\|_{\BMO_\sz}$.

To prove the first inequality of \eqref{3.5}, we fix $f\in L_\loc^1$. Suppose that
$\|f\|_{\ast,\,\sz}<\fz$; otherwise there is nothing to prove. For
any $R\in\crz$, by the  local integrability of $f$ and the
dominated convergence theorem, we have that
$\{\frac1{\rho(R)}\int_R|f-c|^\sz\,d\rho\}^{1/\sz}$
is continuous with respect to $c$ and  it tends to infinity as
$|c|\to\fz$. Thus, for any fixed $R\in\crz$, there exists a certain
$A_R\in\cc$ such that
\begin{eqnarray}\label{3.6}
\inf_{c\in\cc}
\lf\{\dfrac1{\rho(R)}\dint_R|f-c|^\sz\,d\rho\r\}^{1/\sz}
&=&\lf\{\dfrac1{\rho(R)}\dint_R|f-A_R|^\sz\,d\rho\r\}^{1/\sz}\le\|f\|_{\ast,\,\sz}.
\end{eqnarray}
For all $R'\in\crz$, by \eqref{3.6} and the fact that
$|a^\sz-b^\sz|\le|a-b|^\sz$ for all $a,\, b\in(0, \fz)$, we have
$$\dfrac1{\rho (R')}\dint_{R'}\Big||f-A_R|^\sz
-|A_R-A_{R'}|^\sz\Big|\,d\rho \le \dfrac1{\rho
(R')}\dint_{R'}|f-A_{R'}|^\sz\,d\rho
\le\|f\|_{\ast,\,\sz}^\sz,$$ which implies that
\begin{equation}\label{3.7}
\big\||f-A_R|^\sz\big\|_{\BMO}\le 2\|f\|_{\ast,\,\sz}^\sz.
\end{equation}
For all $a, b\in(0, \fz)$, we have
$(a+b)^{1/\sz}\le2^{1/\sz-1}\lf[a^{1/\sz}+b^{1/\sz}\r]$, which
together with \eqref{3.6} yields that for all $R\in\crz$,
\begin{eqnarray}\label{3.8}
&&\dfrac1{\rho(R)}\dint_R|f-A_R|\,d\rho=\dfrac1{\rho(R)}
\dint_R\lf\{|f-A_R|^\sz\r\}^{1/\sz}\,d\rho(x)\noz\\
&&\hs\le 2^{1/\sz-1}\lf\{\dfrac1{\rho(R)}
\dint_R\lf||f(x)-A_R|^\sz-\dfrac1{\rho(R)}
\dint_R|f(y)-A_R|^\sz\,d\rho(y)\r|^{1/\sz}\,d\rho(x)\r.\noz\\
&&\hs\quad+\lf. \lf[\dfrac1{\rho(R)}
\dint_R|f(y)-A_R|^\sz\,d\rho(y)\r]^{1/\sz}\r\}\noz\\
&&\hs\le
2^{1/\sz-1}\lf\{\lf(\lf\||f-A_R|^\sz\r\|_{\BMO_{1/\sz}}\r)^{1/\sz}
+\|f\|_{\ast,\,\sz}\r\}.\noz
\end{eqnarray}
Observe that \eqref{3.4} and \eqref{3.7} imply that
$$\lf(\lf\||f-A_R|^\sz\r\|_{\BMO_{1/\sz}}\r)^{1/\sz}
\le \lf(C_{1/\sz}\||f-A_R|^\sz\|_{\BMO}\r)^{1/\sz}
\le\lf(2C_{1/\sz}\r)^{1/\sz} \|f\|_{\ast,\, \sz},$$ where
$C_{1/\sz}$ is as in \eqref{3.4}.
Applying this estimate, we see that
$$\frac1{\rho(R)}\int_R|f-f_R|\,d\rho
\le\frac2{\rho(R)}\int_R|f-A_R|\,d\rho
\le 2^{1/\sz} [(2C_{1/\sz})^{1/\sz}+1] \|f\|_{\ast,\, \sz}.$$
By taking the supremum over all
$R\in\crz$ we obtain
$\|f\|_{\BMO}\ls \|f\|_{\ast,\,\sz}$, which completes the proof.
\end{proof}

Now we introduce the (local) sharp maximal functions on $(S, d,
\rho)$; see \cite{g, st93, stromberg} for their definitions in the
Euclidean setting and \cite{hyy} for their definitions
in spaces of homogeneous type.

\begin{defn}\rm\label{d3.3}
(i) For any  $f\in L_\loc^1$,
its \emph{sharp maximal function} $f^\sharp$ is defined by
$$f^\sharp(x) \equiv \sup_{R\in\crz(x)}
\dfrac1{\rho(R)}\dint_R|f(y)-f_R|\,d\rho(y) \quad\forall\  x\in S.$$
(ii) Let $s\in(0, 1)$. For any $f\in L_\loc^1$,
its \emph{local sharp maximal function} $\cm_{0, s}^\sharp f$ is defined by
$$\cm_{0, s}^\# f (x) \equiv
\sup_{R\in\crz(x)}
\inf_{c\in\cc}\big((f-c)\chi_R\big)^\ast\big(s\rho(R)\big)
\quad\forall\  x\in S.$$
\end{defn}

The following lemma summarizes some properties of the (local)
sharp maximal functions on $(S, d, \rho)$. The proofs are easy and
hence omitted; see \cite{g, stromberg, hyy, jt}.

\begin{lem}\label{l3.4}
Let $f,\,g\in L_\loc^1$. Then, for all $x\in S$,
\begin{enumerate}
\vspace{-0.25cm}
\item[\rm(i)]  $f^\sharp(x)\le 2\cm f(x)$, where $\cm$ is defined in \eqref{2.1};
\vspace{-0.25cm}
\item[\rm(ii)]  $|f|^\sharp(x)\le 2f^\sharp(x)$ and
$(f+g)^\sharp(x)\le f^\sharp(x)+g^\sharp(x)$;
\vspace{-0.25cm}
\item[\rm(iii)] $f^\sharp(x)/2\le\sup_{R\in\crz(x)}\inf_{a\in\cc}\frac1{\rho(R)}
\int_R|f(y)-a|\,d\rho(y)\le f^\sharp(x)$; \vspace{-0.25cm}
\item[\rm(iv)] when $s\in(0, 1)$, $\cm_{0, s}^\# f (x)\le \cm_{0, s} f(x)$ and
 $\cm_{0, s}^\# f (x)\le s^{-1}f^\#(x)$, where $\cm_{0, s}$ is defined in \eqref{d3.1};
\vspace{-0.25cm}
\item[\rm(v)] if $s\in(0,  1/2]$ and $R\in\crz$, then
$$\rho(\{y\in R:\, |f(y)-m_f(R)|>2\sqrt2 \inf_{x\in R}\cm_{0, s}^\# f
(x)\})\le s\rho(R).$$
\end{enumerate}
\end{lem}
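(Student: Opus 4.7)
The plan is that items (i)--(iv) are bookkeeping consequences of the definitions in Definitions \ref{d3.1} and \ref{d3.3}, combined with the triangle inequality (including its integral form $|\int f\,d\rho| \le \int |f|\,d\rho$) and Chebyshev's inequality; they go through unchanged from the Euclidean template. Only (v) will require a nontrivial input, namely Lemma \ref{l3.2} applied to a suitable complex translate of $f$.

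Concretely, for (i) I would bound $|f(y)-f_R| \le |f(y)|+|f_R|$ and average over $y\in R$. For the first inequality of (ii), first note that $|f_R| \le |f|_R$ by the integral triangle inequality, so $|f|_R - |f_R| = \frac{1}{\rho(R)}\int_R(|f|-|f_R|)\,d\rho \le \frac{1}{\rho(R)}\int_R|f-f_R|\,d\rho$; combining with $\bigl||f(y)|-|f_R|\bigr| \le |f(y)-f_R|$ and averaging yields $|f|^\sharp \le 2f^\sharp$. Subadditivity of $(\cdot)^\sharp$ is immediate from $(f+g)_R = f_R+g_R$. For (iii), the right-hand bound takes $c = f_R$, while the left-hand bound follows from $|f-f_R| \le |f-c|+|c-f_R|$ with $|c-f_R| \le \frac{1}{\rho(R)}\int_R|f-c|\,d\rho$. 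For (iv), I would take $c = 0$ in the definition of $\cm_{0,s}^\#$ to obtain the first inequality and apply Chebyshev's inequality to get $((f-c)\chi_R)^\ast(s\rho(R)) \le \frac{1}{s\rho(R)}\int_R|f-c|\,d\rho$, then take the infimum over $c$ and invoke (iii) for the second.

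The substantive step, and the only place I expect to encounter real difficulty, is (v). My plan is: fix $R\in\crz$, set $N \equiv \inf_{x\in R}\cm_{0,s}^\# f(x)$, and for arbitrary $\ez>0$ choose $c\in\cc$ with $T_c \equiv ((f-c)\chi_R)^\ast(s\rho(R)) < N+\ez$. Continuity of measure yields $\rho(\{y\in R:\, |f(y)-c|>T_c\}) \le s\rho(R) \le \rho(R)/2$, where the hypothesis $s\le 1/2$ is essential. Applying Lemma \ref{l3.2} to $f-c$ and using translation-equivariance of the complex median, $m_{f-c}(R) = m_f(R)-c$, gives $|c-m_f(R)| \le \sqrt 2\,T_c$. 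The triangle inequality then produces the inclusion
\begin{equation*}
\{y\in R:\, |f(y)-m_f(R)|>2\sqrt 2\,T_c\} \subset \{y\in R:\, |f(y)-c|>T_c\},
\end{equation*}
so the left-hand set has $\rho$-measure at most $s\rho(R)$; since $T_c<N+\ez$, the same bound holds with $2\sqrt 2(N+\ez)$ in place of $2\sqrt 2\,T_c$, and sending $\ez\to 0^+$ by monotone convergence of the nested level sets delivers the claim. The main points requiring care are the strict/non-strict inequality alignment between the definitions of $f^\ast$ and $m_f(R)$, and verifying translation-equivariance of the complex median component by component.
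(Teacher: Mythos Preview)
Your proposal is correct. The paper itself omits the proof entirely, referring the reader to standard references (Grafakos, Str\"omberg, Jawerth--Torchinsky, and Hu--Yang--Yang), so there is no approach in the paper to compare against; your argument follows exactly the expected template from those sources, and in particular your handling of (v) via Lemma~\ref{l3.2} applied to $f-c$ together with the translation-equivariance of the median is the standard route.
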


For any  $p\in(1, \fz)$ and $s\in(0, 1)$, an immediate
consequence of Lemma \ref{l3.4} (i) and (iv) and
the $L^p$-boundedness of $\cm$  is that for all $f\in L^p$,
\begin{equation}\label{3.9}
\|\cm_{0, s}^\# f \|_{L^p}\le 2 s^{-1}\|\cm\|_{L^p\to
L^p}\|f\|_{L^p};
\end{equation}
Lemma \ref{l3.4}(iv) and Lemma \ref{l3.1}(vii) also
imply that for all  $f\in L^{p,\fz}$,
\begin{equation}\label{3.10}
\|\cm_{0, s}^\# f \|_{L^{p,\fz}}\le \|\cm\|_{L^1\to
L^{1,\fz}}^{1/p} \, s^{-1/p}\|f\|_{L^{p,\fz}}.
\end{equation}
Indeed, the converses of \eqref{3.9} and \eqref{3.10} hold for
small $s$. To see this, we need the following  certain kind of {\em ``good-$\lz$" inequalities}
involving the dyadic local maximal function and the local sharp maximal function.

\begin{lem}\label{l3.5}
Let $0<s_1, s_2\le \frac12$. Then, there exists a positive constant
$C_2$ depending only on $S$ such that for all  $f\in L_\loc^1$ and all $\lz>0$,
\begin{eqnarray*}
&&\rho(\{x\in S:\, \cm_{0, s_1}^\cp f(x)>3\lz, \,\cm_{0, s_2}^\# f (x)\le\lz/4\})\\
&&\quad\le C_2 s_1^{-1}s_2\rho\lf(\lf\{x\in S:\,\cm_{0,s_1}^\cp f(x)>\lz\r\}\r).
\end{eqnarray*}
\end{lem}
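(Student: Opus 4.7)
The plan is to mimic the Fefferman--Stein good-$\lz$ template in the dyadic setting, with Euclidean cubes replaced by the family $\cp$ from Lemma \ref{dyadicgrid}. First I would form the level set $\boz_\lz\equiv\{x\in S:\cm_{0,s_1}^\cp f(x)>\lz\}$ and observe that, by the very definition of $\cm_{0,s_1}^\cp$, $\boz_\lz$ is a union of sets of $\cp$: if $R\in\cp$ witnesses $\cm_{0,s_1}^\cp f(x)>\lz$ at one point $x\in R$, the same $R$ witnesses it at every point of $R$. Assuming $\rho(\boz_\lz)<\fz$ (else the conclusion is vacuous), a standard stopping-time argument in the tree $\cp$ produces pairwise disjoint maximal sets $R_j\in\cp$ with $\boz_\lz=\bigcup_j R_j$. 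Let $\wz R_j\in\cp$ denote the parent of $R_j$ supplied by Lemma \ref{dyadicgrid}(iii). Maximality provides $y_j\in\wz R_j\setminus\boz_\lz$, so $(f\chi_{\wz R_j})^\ast(s_1\rho(\wz R_j))\le\lz$. Since $s_1\le 1/2$, this forces $\rho(\{y\in\wz R_j:|f(y)|>\lz+\vp\})<\rho(\wz R_j)/2$ for every $\vp>0$, whence Lemma \ref{l3.2} yields the crucial median bound $|m_f(\wz R_j)|\le\sqrt 2\,\lz$.

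Next, I would fix $R_j$ and the target set $E_j\equiv R_j\cap\{\cm_{0,s_1}^\cp f>3\lz\}\cap\{\cm_{0,s_2}^\# f\le\lz/4\}$, which I may assume is non-empty. Any $x_j\in E_j$ gives $\inf_{z\in\wz R_j}\cm_{0,s_2}^\# f(z)\le\lz/4$, so Lemma \ref{l3.4}(v) applied to $\wz R_j$ (with $s_2\le 1/2$) yields
\begin{equation*}
\rho\bigl(\{y\in\wz R_j:|f(y)-m_f(\wz R_j)|>\sqrt 2\,\lz/2\}\bigr)\le s_2\,\rho(\wz R_j).
\end{equation*}
Combining this with $|m_f(\wz R_j)|\le\sqrt 2\,\lz$, the reverse triangle inequality, and the elementary comparison $3>3\sqrt 2/2$, I then obtain
\begin{equation*}
\rho\bigl(\wz R_j\cap\{|f|>3\lz\}\bigr)\le s_2\,\rho(\wz R_j).
\end{equation*}

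The closing step converts the hypothesis $\cm_{0,s_1}^\cp f(x)>3\lz$ into a dyadic averaging statement inside $R_j$. For each $x\in E_j$, pick $R'(x)\in\cp$ with $x\in R'(x)$ and $(f\chi_{R'(x)})^\ast(s_1\rho(R'(x)))>3\lz$. By the tree property Lemma \ref{dyadicgrid}(ii), either $R'(x)\subset R_j$ or $R'(x)\supset\wz R_j$; the latter is impossible, since it would yield $\cm_{0,s_1}^\cp f(y_j)>3\lz>\lz$, contradicting $y_j\notin\boz_\lz$. Hence $R'(x)\subset R_j$ and $\rho(R'(x)\cap\{|f|>3\lz\})\ge s_1\rho(R'(x))$, which gives $\cm_\cp(\chi_{R_j\cap\{|f|>3\lz\}})(x)\ge s_1$. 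Invoking the weak type $(1,1)$ bound for $\cm_\cp$ from Proposition \ref{p2.1} together with Lemma \ref{dyadicgrid}(iii), I would conclude
\begin{equation*}
\rho(E_j)\ls s_1^{-1}\rho(R_j\cap\{|f|>3\lz\})\le s_1^{-1}\rho\bigl(\wz R_j\cap\{|f|>3\lz\}\bigr)\le s_1^{-1}s_2\rho(\wz R_j)\ls s_1^{-1}s_2\rho(R_j),
\end{equation*}
and summation over the disjoint family $\{R_j\}$ completes the proof with a constant $C_2$ depending only on $S$.

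The main technical obstacle, as is typical for good-$\lz$ inequalities, is the clean justification of $R'(x)\subset R_j$: it requires simultaneously exploiting the maximality of $R_j$ inside $\boz_\lz$ and the tree structure of $\cp$. All remaining ingredients reduce to an arithmetic comparison and direct appeals to Lemmas \ref{l3.2} and \ref{l3.4}(v) together with the weak-type bound of Proposition \ref{p2.1}.
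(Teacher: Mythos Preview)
Your proof is correct and follows essentially the same approach as the paper's. Both arguments decompose $\boz_\lz$ into maximal dyadic sets $R_j$, pass to the parent $\wz R_j$, use Lemma~\ref{l3.2} to bound $|m_f(\wz R_j)|\le\sqrt{2}\lz$, invoke Lemma~\ref{l3.4}(v) for the factor $s_2$, and finish with the weak-type $(1,1)$ bound for the dyadic maximal function (Lemma~\ref{l3.1}(vi) in the paper, which is the same content as the $\cm_\cp$ bound you cite). The only cosmetic difference is that the paper keeps $f-m_f(\wz R_j)$ as the object and derives the pointwise inequality $\cm_{0,s_1/2}^{\cp}\big((f-m_f(\wz R_j))\chi_{\wz R_j}\big)>\lz$ via Lemma~\ref{l3.1}(iv), whereas you first pass to $\{|f|>3\lz\}$ by the triangle inequality and the numerical check $3>3\sqrt{2}/2$; the two routes are interchangeable.
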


\begin{proof}
Take $\lambda >0$ and set $\boz_\lz \equiv \lf\{x\in S:\,\cm_{0, s_1}^\cp f(x)>\lz\r\}$. We may
assume that $\rho(\boz_\lz)<\fz$, otherwise there is nothing to
prove. For any fixed $x\in\boz_\lz$, there exists a ``maximal dyadic
cube" $R_x\in\cp$ containing $x$ such that
\begin{equation}\label{3.11}
\inf\big\{t>0:\, \rho(\{y\in R_x:\, |f(y)|>t\})<s_1\rho(R_x)\big\}>\lz.
\end{equation}
Here ``maximal dyadic cube" means that if $R'\in\cp\cap \mathcal R(x)$
satisfies \eqref{3.11},
 then $R'\subset R_x$. Such a ``maximal
dyadic cube" exists since $R_x\subset\boz_\lz$ and
$\rho(\boz_\lz)<\fz$. Let $\{R_j\}_{j\in I}$ be the collection of
all such ``maximal dyadic cubes" obtained by running $x$ over $\boz_\lz$.
From the maximality, it follows that any two ``maximal dyadic
cubes" are disjoint. Moreover, $\boz_\lz=\cup_{j\in I} R_j$.
Therefore,  to show Lemma \ref{l3.5}, it suffices to
prove that there exists a positive constant $C_2$ such that for all
$j\in I$,
\begin{equation}\label{3.12}
\rho\lf(\lf\{x\in R_j:\, \cm_{0, s_1}^\cp f(x)>3\lz,\, \cm_{0, s_2}^\# f
(x)\le\lz/4\r\}\r) \le C_2 s_1^{-1}s_2 \rho\lf(R_j\r).
\end{equation}

Fix $j\in I$. We may assume that there exists $x_0\in R_j$ such that
$\cm_{0, s_2}^\# f (x_0)\le\lz/4$; otherwise \eqref{3.12} holds
trivially. Suppose that $R_j\in\cp_{j_0}$ for some $j_0\in\zz$.
Using Lemma \ref{dyadicgrid}(iii), we take $\wz R_j$ to be the unique
Calder\'on--Zygmund set in $\cp_{j_0+1}$ that contains $R_j$. Then,
$\rho(\wz R_j)\le \max\{2^n, 3\}\rho(R_j)$. By the maximality of
$R_j$ and \eqref{3.11}, we have
\begin{equation}\label{3.13}
\inf\lf\{t>0:\, \rho(\{y\in \wz R_j:\, |f(y)|>t\})<s_1\rho(\wz
R_j)\r\}\le \lz.
\end{equation}
From this and Lemma \ref{l3.2} together with the hypothesis $s_1\le 1/2$,
it follows that
$$|m_{f}(\wz R_j)|
\le\sqrt 2\inf\{t>0:\, \rho(\{y\in \wz R_j:\, |f(y)|>t\})<\rho(\wz
R_j)/2\} \le \sqrt 2 \lz.$$ Thus,
$$\cm_{0, s_1/2}^\cp (m_{f}(\wz R_j)\chi_{\wz R_j})(x)\le |m_f(\wz R_j)|\le\sqrt 2\lz.$$

If $\cm_{0, s_1}^\cp (f)(x)>3\lz$, then by \eqref{3.13} and the definition of $\wz R_j$,
we have
$$\cm_{0, s_1}^\cp (f\chi_{\wz R_j})(x)=\cm_{0, s_1}^\cp (f)(x)>3\lz.$$
So applying Lemma \ref{l3.1}(iv) yields that for all $x\in R_j$
satisfying $\cm_{0, s_1}^\cp (f)(x)>3\lz$,
\begin{eqnarray}\label{triangineq}
&&\cm_{0, s_1/2}^\cp \lf((f-m_{f}(\wz R_j))\chi_{\wz R_j}\r)(x)\noz\\
&&\quad\ge
\cm_{0, s_1}^\cp \lf(f\chi_{\wz R_j}\r)(x)
-\cm_{0, s_1/2}^\cp \lf(m_{f}(\wz R_j)\chi_{\wz R_j}\r)(x)>\lz.
\end{eqnarray}
By using \eqref{triangineq}, Lemma \ref{l3.1}(vi),  $\cm_{0, s_2}^\# f (x_0)\le\lz/4$ and
Lemma \ref{l3.4}(v), we  obtain
\begin{eqnarray*}
&&\rho\lf(\lf\{x\in R_j:\, \cm_{0, s_1}^\cp f(x)>3\lz,\,
\cm_{0, s_2}^\# f (x)\le\lz/4\r\}\r)\\
&&\hs \le \rho\lf(\lf\{x\in R_j:\, \cm_{0, s_1/2}^\cp \lf((f-m_{f}(\wz R_j))\chi_{\wz R_j}\r)(x)>\lz\r\}\r)\\
&&\hs\le 2\|\cm\|_{L^1\to L^{1,\fz}} s_1^{-1}
\rho\lf(\lf\{x\in \wz R_j:\, |f(x)-m_{f}(\wz R_j)|>\lz\r\}\r)\\
&&\hs\le 2\|\cm\|_{L^1\to L^{1,\fz}} s_1^{-1}
\rho\bigg(\bigg\{x\in \wz R_j:\, |f(x)-m_{f}(\wz R_j)|>2\sqrt 2
\inf_{z\in\wz R_j} \cm_{0, s_2}^\# f (z)\bigg\}\bigg)\\
&&\hs \le 2\|\cm\|_{L^1\to L^{1,\fz}} s_1^{-1} s_2\rho(\wz
R_j).
\end{eqnarray*}
Hence, \eqref{3.12} holds with $C_2\equiv 2\max\{2^n, 3\} \|\cm\|_{L^1\to
L^{1,\fz}}$. This finishes the proof.
\end{proof}

Applying the previous``good-$\lz$" inequality, we prove the following Fefferman--Stein type inequality.

\begin{prop}\label{p3.1}
Let $p_0\in(0, \fz)$ and $C_2$ be as in Lemma \ref{l3.5}. Then, for
any $p\in(p_0, \fz)$ and $s\in(0, 1/2]$ satisfying $s< (2^23^p
C_2)^{-1} $, there exists a positive constant $C$ such that
\begin{equation*}
\|f\|_{L^p}\le C\|\cm_{0, s}^\# f \|_{L^p} \quad\forall\ f\in L^{p_0,\fz}.
\end{equation*}
\end{prop}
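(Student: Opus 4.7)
The approach is a two-step local Fefferman--Stein scheme: first dominate $\|f\|_{L^p}$ by the $L^p$ norm of the dyadic local maximal function $\cm_{0, 1/2}^{\cp} f$, and then bound the latter by $\|\cm_{0, s}^\# f\|_{L^p}$ through a good-$\lz$/absorption argument built on Lemma \ref{l3.5}. For the first reduction, the measure estimate $\rho(\{|f|>\lz\}) \le \rho(\{\cm_{0, 1/2}^{\cp} f>\lz\})$, which is the first inequality of Lemma \ref{l3.1}(vi) with $s=1/2$, combined with the layer cake formula $\|g\|_{L^p}^p = p\int_0^\fz \lz^{p-1}\rho(\{|g|>\lz\})\,d\lz$, yields $\|f\|_{L^p}\le \|\cm_{0, 1/2}^{\cp} f\|_{L^p}$. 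It therefore suffices to produce a constant $C$ with $\|\cm_{0, 1/2}^{\cp} f\|_{L^p}\le C\|\cm_{0, s}^\# f\|_{L^p}$.

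For this main bound, introduce the truncated integrals
$$I_N \equiv p\int_0^N \lz^{p-1}\rho(\{x\in S:\, \cm_{0, 1/2}^{\cp} f(x)>\lz\})\,d\lz \qquad \forall\, N\in(0,\fz),$$
which increase monotonically to $\|\cm_{0, 1/2}^{\cp} f\|_{L^p}^p$ as $N\to\fz$. The substitution $\lz=3\mu$, the decomposition
$$\rho(\{\cm_{0, 1/2}^{\cp} f>3\mu\}) \le \rho(\{\cm_{0, 1/2}^{\cp} f>3\mu,\, \cm_{0, s}^\# f\le \mu/4\}) + \rho(\{\cm_{0, s}^\# f>\mu/4\}),$$
Lemma \ref{l3.5} applied with $(s_1,s_2)=(1/2,s)$ to the first piece, and the further substitution $\nu=\mu/4$ in the second, together produce
$$I_N \le 2\cdot 3^p C_2\, s\, I_N + 12^p \|\cm_{0, s}^\# f\|_{L^p}^p.$$
The hypothesis $s<(2^2 3^p C_2)^{-1}$ forces the absorption coefficient $2\cdot 3^p C_2\, s$ to lie strictly below $1/2$, so moving the first term to the left yields $I_N\le 2\cdot 12^p \|\cm_{0, s}^\# f\|_{L^p}^p$; sending $N\to\fz$ and combining with the first reduction completes the proof, with final constant $C=12\cdot 2^{1/p}$.

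The sole delicate point---and the place where the hypothesis $f\in L^{p_0,\fz}$ with $p_0<p$ is genuinely used---is the legitimacy of the absorption step, which requires $I_N<\fz$ for every finite $N$. One may assume $\|\cm_{0, s}^\# f\|_{L^p}<\fz$, as otherwise there is nothing to prove. The pointwise bound $\cm_\cp\le \cm$ (noted after Proposition \ref{p2.1}) combined with the second inequality of Lemma \ref{l3.1}(vi) at $s=1/2$ gives
$$\rho(\{\cm_{0, 1/2}^{\cp} f>\lz\}) \le 2\|\cm\|_{L^1\to L^{1,\fz}}\rho(\{|f|>\lz\}) \le 2\|\cm\|_{L^1\to L^{1,\fz}}\|f\|_{L^{p_0,\fz}}^{p_0}\lz^{-p_0},$$
so $I_N \ls \|f\|_{L^{p_0,\fz}}^{p_0} \int_0^N \lz^{p-1-p_0}\,d\lz = \|f\|_{L^{p_0,\fz}}^{p_0} N^{p-p_0}/(p-p_0)<\fz$, the finiteness being exactly the condition $p>p_0$. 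This legitimizes the absorption and finishes the argument.
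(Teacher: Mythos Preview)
Your proof is correct and follows essentially the same approach as the paper: both arguments define a truncated layer-cake integral of the distribution function of $\cm_{0,1/2}^{\cp} f$, apply Lemma~\ref{l3.5} with $(s_1,s_2)=(1/2,s)$ after the substitution $\lz\mapsto 3\lz$, absorb using the smallness of $s$ (which requires the finiteness of $I_N$ coming from $f\in L^{p_0,\fz}$ and $p>p_0$), and then pass to the limit and invoke Lemma~\ref{l3.1}(vi). The only differences are cosmetic (your upper limit is $N$ rather than the paper's $3N$, and you make the constants and the finiteness check more explicit).
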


\begin{proof} Fix $f\in L^{p_0,\fz}$.
For any $N\in\nn$, set
$${\rm I_N} \equiv \int_0^{3N}\rho\lf(\lf\{x\in S:\,\cm_{0, 1/2}^\cp  f(x)>\lz\r\}\r)\,p\lz^{p-1}d\lz.$$ Then, Lemma \ref{l3.1}(vi)
and  $p> p_0$ imply that $\rm I_N<\fz$.
Applying Lemma \ref{l3.5} yields that
\begin{eqnarray*}
\rm I_N&&= 3^p\dint_0^{N}p\lz^{p-1}\rho\lf(\lf\{x\in S:\,
\cm_{0,1/2}^\cp f(x)>3\lz\r\}\r)\,d\lz\\
&&\le3^p\lf[\dint_0^{N}p\lz^{p-1} \rho\lf(\lf\{x\in S:\,\cm_{0,1/2}^\cp
f(x)>3\lz,
\cm_{0, s}^\sharp f (x)\le\lz/4\r\}\r)\,d\lz\r.\\
&&\quad+\lf.\dint_0^{N}p\lz^{p-1}
\rho\lf(\lf\{x\in S:\,\cm_{0, s}^\sharp f(x)>\lz/4\r\}\r)\,d\lz\r]\\
&&\le 3^p 2 C_2 s \dint_0^{N}p\lz^{p-1}\rho \lf(\lf\{x\in
S:\,\cm_{0, 1/2}^\cp  f(x)>\lz\r\}\r)\,d\lz
+3^p4^p \|\cm_{0, s}^\sharp f\|_{L^p}^p\\
&&\le  s 3^p 2 C_2 {\rm I}_N +3^p4^p \|\cm_{0, s}^\sharp f\|_{L^p}^p.
\end{eqnarray*}
Since $s< (3^p 2^2 C_2)^{-1}$,
we have ${\rm I}_N\ls \|\cm_{0, s}^\sharp f \|_{L^p}^p$.
Letting $N\to\fz$ and using  Lemma \ref{l3.1}(vi),
we obtain
$\|f\|_{L^p}\le\|\cm_{0, 1/2}^\cp  f\|_{L^p} \ls\|\cm_{0, s}^\# f
\|_{L^p},$ which completes the proof.
\end{proof}
The corresponding weak-type Fefferman--Stein inequality  is the following.

\begin{prop}\label{p3.2}
Let $p_0 \in (0, \fz)$ and $C_2$ be as in Lemma \ref{l3.5}. Then, for
any $p\in[p_0, \fz)$ and $s\in(0, 1/2]$ satisfying $s< (2^23^p
C_2)^{-1} $, there exists a positive constant $C$ such that
$$\|f\|_{L^{p,\,\fz}}\le C\|\cm_{0, s}^\sharp f\|_{L^{p,\,\fz}}
\quad\quad \forall\ f\in L^{p_0,\fz}.$$
\end{prop}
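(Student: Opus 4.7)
The plan is to mimic the strong-type argument of Proposition~\ref{p3.1}, replacing the layer-cake integration by a supremum truncated at height $N$. A pointwise reduction is built in: Lemma~\ref{l3.1}(vi) (with $s=1/2$) gives $\rho(\{|f|>\lz\})\le \rho(\{\cm_{0,1/2}^\cp f>\lz\})$ for every $\lz>0$, whence $\|f\|_{L^{p,\,\fz}}\le \|\cm_{0,1/2}^\cp f\|_{L^{p,\,\fz}}$. Therefore I would reduce the proposition to the bound $\|\cm_{0,1/2}^\cp f\|_{L^{p,\,\fz}}\ls \|\cm_{0,s}^\# f\|_{L^{p,\,\fz}}$.

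Fix $f\in L^{p_0,\fz}$ and, for each $N>0$, set
$$A_N\equiv \sup_{0<\lz\le N}\lz^p\rho\lf(\lf\{x\in S:\,\cm_{0,1/2}^\cp f(x)>\lz\r\}\r).$$
An a priori finiteness check for $A_N$ is crucial for the absorption below. Combining Lemma~\ref{l3.1}(vi) with the Chebyshev-type estimate $\rho(\{|f|>\lz\})\le \lz^{-p_0}\|f\|_{L^{p_0,\,\fz}}^{p_0}$ yields $\lz^p\rho(\{\cm_{0,1/2}^\cp f>\lz\})\ls \lz^{p-p_0}\|f\|_{L^{p_0,\,\fz}}^{p_0}$, which is uniformly bounded on $\lz\in(0,N]$ since $p\ge p_0$; hence $A_N<\fz$.

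Next, I would apply Lemma~\ref{l3.5} with $s_1=1/2$, $s_2=s$, and $\lz/3$ in place of $\lz$ to obtain
$$\rho(\{\cm_{0,1/2}^\cp f>\lz\})\le 2C_2 s\,\rho(\{\cm_{0,1/2}^\cp f>\lz/3\})+\rho(\{\cm_{0,s}^\# f>\lz/12\}).$$
Multiplying by $\lz^p=3^p(\lz/3)^p=12^p(\lz/12)^p$, using $\lz/3\le N$ to bound the first term on the right by $2\cdot 3^p C_2 s\,A_N$, and the trivial estimate $(\lz/12)^p\rho(\{\cm_{0,s}^\# f>\lz/12\})\le\|\cm_{0,s}^\# f\|_{L^{p,\,\fz}}^p$ for the second, then taking the supremum over $0<\lz\le N$, I would arrive at
$$A_N\le 2\cdot 3^p C_2 s\,A_N+12^p\|\cm_{0,s}^\# f\|_{L^{p,\,\fz}}^p.$$
The hypothesis $s<(2^2 3^p C_2)^{-1}$ makes the first coefficient strictly less than $1/2$, so the established finiteness of $A_N$ allows absorption and yields $A_N\le 2\cdot 12^p\|\cm_{0,s}^\# f\|_{L^{p,\,\fz}}^p$, with a bound independent of $N$. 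Letting $N\to\fz$ completes the argument.

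The main subtlety, and the chief obstacle, is precisely the a priori finiteness of $A_N$: the estimate $\lz^{p-p_0}$ blows up at infinity when $p>p_0$, so one cannot bypass the absorption by bounding $\|\cm_{0,1/2}^\cp f\|_{L^{p,\,\fz}}$ directly in terms of $\|f\|_{L^{p_0,\,\fz}}$. Introducing the truncation at height $N$, mirroring the role of $\mathrm I_N$ in the proof of Proposition~\ref{p3.1}, is exactly what makes the absorption step legitimate.
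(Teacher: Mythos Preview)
Your argument is correct and essentially the same as the paper's: both truncate the weak-type quantity at a finite level, verify its finiteness via Lemma~\ref{l3.1}(vi) and the hypothesis $f\in L^{p_0,\fz}$ with $p\ge p_0$, apply the good-$\lz$ inequality of Lemma~\ref{l3.5}, absorb using $s<(2^23^pC_2)^{-1}$, and then let the truncation parameter tend to infinity. The only cosmetic difference is that the paper takes the sup over $0<\lz<3N$ and applies Lemma~\ref{l3.5} at level $\lz$, while you take the sup over $0<\lz\le N$ and substitute $\lz/3$ into the lemma.
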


\begin{proof}
For any $N\in\nn$, set $\rm I_N \equiv\sup_{0<\lz<3N}\lz^p\rho(\{x\in S:\,
\cm_{0,1/2}^\cp f(x)>\lz\})$. Combining Lemma \ref{l3.1}(vi) with
 $f\in L^{p_0,\fz}$ and $p\ge p_0$ implies that $\rm I_N<\infty$.
Thus, by Lemma \ref{l3.5},
\begin{eqnarray*}
\rm I_N&&=3^p\sup_{0<\lz<N}\lz^p\rho\lf(\lf\{x\in S:\, \cm_{0,1/2}^\cp f(x)>3\lz\r\}\r)\\
&&\le 3^p\sup_{0<\lz<N}\lz^p\rho\lf(\lf\{x\in S:\,
\cm_{0,1/2}^\cp f(x)>3\lz,\, \cm_{0, s}^\sharp f(x)\le \lz/ 4\r\}\r)\\
&&\quad+3^p\sup_{0<\lz<N}\lz^p\rho\lf(\lf\{x\in S:\,
\cm_{0, s}^\sharp f(x)>\lz/4\r\}\r)\\
&&\le 2sC_23^p\sup_{0<\lz<N}\lz^p\rho\lf(\lf\{x\in S:\,
\cm_{0,1/2}^\cp f(x)>\lz\r\}\r)
+3^p 4^p\|\cm_{0, s}^\sharp f\|_{L^{p,\,\fz}}^p\\
&&\le s23^pC_2{\rm I}_N +3^p 4^p\|\cm_{0, s}^\sharp f\|_{L^{p,\,\fz}}^p.
\end{eqnarray*}
Since $\rm I_N$ is finite, the assumption $s< (2^23^p C_2)^{-1} $ implies that $\rm I_N\ls
\|\cm_{0, s}^\sharp f\|_{L^{p,\,\fz}}^p$. Letting $N\to\fz$ and
using Lemma \ref{l3.1}(vi) yield the desired conclusion.
\end{proof}

\begin{rem}\rm
A Fefferman--Stein inequality involving the sharp maximal function also holds. More precisely, let $p_0$ be in $(1,\infty)$. Then for any $p\in(p_0, \infty)$, by Lemma \ref{l3.4}(iv) and Proposition \ref{p3.1},
\begin{eqnarray}\label{3.15}
\|f\|_{L^p} \le C \|f^\sharp\|_{L^p} \quad\quad \forall\
f\in L^{p_0, \infty}\,.
\end{eqnarray}
As we shall see, such a Fefferman--Stein inequality is not enough for the
proof of Theorem \ref{t4.1} below. This explains why
we studied Fefferman--Stein type inequalities related to the local
sharp maximal function as in Proposition \ref{p3.2},
which is crucial in the proof of
Theorem \ref{t4.1}. However, \eqref{3.15} would be enough to
give a direct proof of Theorem \ref{t4.2} below.
\end{rem}

\section{Maximal Singular Integrals}\label{s4}

In this section, we consider the boundedness of
maximal singular integrals whose kernels satisfy some integral size
condition and H\"ormander's condition.

Assume that $K$ is a locally integrable function on $(S\times S)\setminus\{(x, x):\, x\in S\}$ such that
\begin{equation}\label{4.1}
\sup_{y\in S}\,\sup_{r>0}\dint_{r<d(x, y)\le2r}\big[|K(x, y)|+|K(y, x)|\big]\,d\rho(x) \equiv
\nu_1<\fz,
\end{equation}
and
\begin{equation}\label{4.2}
\sup_{R\in\crz}\sup_{y,\,y'\in R}
\dint_{(R^\ast)^\complement}\lf[|K(x, y)-K(x, y')| +|K(y, x)-K(y',
x)|\r]\,d\rho(x) \equiv  \nu_2<\fz,
\end{equation}
where, for any $R$ in $\mathcal R$, $R^\ast \equiv \{x\in S:\,d(x, R)<r_R\}$ and
$(R^\ast)^\complement \equiv  S\setminus R^\ast$.

Let $T$ be the \emph{linear operator associated with a kernel} $K$ satisfying
\eqref{4.1} and \eqref{4.2}; in particular, for all $f\in L_c^\fz$ and
$x\notin\supp f$,
\begin{equation*}
Tf(x)=\dint_SK(x, y)f(y)\,d\rho(y).
\end{equation*}
We define the {\em maximal singular integral operator} $T^\ast$ by
\begin{equation}\label{4.3}
T^\ast f(x) \equiv \sup_{R\in\crz(x)}|T_R f(x)| \quad\quad
\forall\ f\in L_c^\fz,\, \forall\ x\in S,
\end{equation}
where $T_R$ is the {\em truncated operator} defined by
\begin{equation*}
    T_R f(x) \equiv \dint_{(R^\ast)^\complement} K(x, y)f(y)\,d\rho(y)
    \quad\quad \forall\ x\in S,\, \forall\ R\in\crz(x).
\end{equation*}

The main result concerning such maximal singular integrals is the following.

\begin{thm}\label{t4.1}
Suppose that $T^\ast$ is the maximal singular integral operator as in
\eqref{4.3} associated with a kernel $K$ satisfying \eqref{4.1} and
\eqref{4.2}. The following statements are equivalent:
\begin{enumerate}
\vspace{-0.2cm}
\item[\rm(i)] $T^\ast$ is bounded from $L_c^\fz$ to $\BMO$;
\vspace{-0.25cm}
\item[\rm(ii)] $T^\ast$ is bounded on $L^p$ for all $p\in(1, \fz)$;
\vspace{-0.25cm}
\item[\rm(iii)] $T^\ast$ is bounded on $L^p$ for some $p\in(1, \fz)$;
\vspace{-0.25cm}
\item[\rm(iv)] $T^\ast$ is bounded from $L^1$ to $L^{1,\fz}$.
\end{enumerate}
\end{thm}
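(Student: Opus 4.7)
The plan is to establish the cyclic chain of implications $\mathrm{(ii)} \Rightarrow \mathrm{(iii)} \Rightarrow \mathrm{(iv)} \Rightarrow \mathrm{(i)} \Rightarrow \mathrm{(ii)}$. The first implication is immediate since ``for all $p$'' entails ``for some $p$''. The three remaining steps require genuinely different techniques and are sketched below.

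For $\mathrm{(iii)} \Rightarrow \mathrm{(iv)}$, I would apply the dyadic Calder\'on--Zygmund decomposition of Proposition~\ref{p2.2} to $f\in L^1$ at level $\az>0$, writing $f=g+\sum_i b_i$ with $\supp b_i\subset R_i\in\cp$. The good part satisfies $\|g\|_{L^{p_0}}^{p_0}\ls \az^{p_0-1}\|f\|_{L^1}$, which together with Chebyshev and the $L^{p_0}$-hypothesis yields $\rho(\{T^\ast g>\az\})\ls \|f\|_{L^1}/\az$. Lemma~\ref{l2.1}(ii) gives $\rho(\bigcup_i R_i^\ast)\ls \|f\|_{L^1}/\az$, so it remains to prove $\int_{(R_i^\ast)^\complement}T^\ast b_i\,d\rho\ls \|b_i\|_{L^1}$ for each $i$. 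The subtlety introduced by the supremum defining $T^\ast$ is handled by case analysis on $R\in\crz(x)$: when $R^\ast\supset R_i$ the truncation forces $T_R b_i(x)=0$, while otherwise the cancellation $\int b_i\,d\rho=0$ combined with H\"ormander's condition \eqref{4.2}, together with the size estimate \eqref{4.1} to absorb residual overlap, yields the required bound.

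For $\mathrm{(iv)} \Rightarrow \mathrm{(i)}$, fix $f\in L_c^\fz$ and $R\in\crz$. By Lemma~\ref{l3.3} it suffices to bound $\|T^\ast f\|_{\BMO_q}$ for some $q\in(0,1)$. Decompose $f=f_1+f_2$ with $f_1:=f\chi_{R^\ast}$. For $f_1$, the weak-type $(1,1)$ bound from $\mathrm{(iv)}$ together with Kolmogorov's inequality gives $\{\rho(R)^{-1}\int_R|T^\ast f_1|^q\,d\rho\}^{1/q}\ls \|f_1\|_{L^1}/\rho(R)\ls \|f\|_\fz$. For $f_2$, the smoothness condition \eqref{4.2} applied in the $x$-variable permits a pointwise comparison: for $y,y'\in R$ and any $R'\in\crz(y)$, a companion $R''\in\crz(y')$ of comparable size can be chosen so that $|T_{R'}f_2(y)-T_{R''}f_2(y')|\ls \|f\|_\fz$, which by symmetry yields $|T^\ast f_2(y)-T^\ast f_2(y')|\ls \|f\|_\fz$ uniformly on $R$. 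Selecting $c_R$ to be any value of $T^\ast f_2$ on $R$ and combining the two estimates via the triangle inequality completes the $\BMO_q$ bound.

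The main obstacle is $\mathrm{(i)} \Rightarrow \mathrm{(ii)}$. I would invoke Proposition~\ref{p3.1}, which for $p\in(p_0,\fz)$ and sufficiently small $s$ reduces the strong $L^p$ boundedness of $T^\ast$ to the estimate $\|\cm_{0,s}^\# T^\ast f\|_{L^p}\ls \|f\|_{L^p}$, provided the a priori membership $T^\ast f\in L^{p_0,\fz}$ is first established (which I would obtain by truncation on the dense subclass $L_c^\fz\cap L^p$). The core step is the pointwise inequality $\cm_{0,s}^\#(T^\ast f)(x)\ls \cm f(x)$. Splitting $f=f_1+f_2$ with $f_1:=f\chi_{R^\ast}$ for $R\in\crz(x)$, and using sublinearity $|T^\ast f-T^\ast f_2|\le T^\ast f_1$, the far piece $T^\ast f_2$ is again controlled by the H\"ormander oscillation estimate obtained in $\mathrm{(iv)}\Rightarrow\mathrm{(i)}$, contributing a term of order $\cm f(x)$. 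For the local piece $T^\ast f_1$, hypothesis $\mathrm{(i)}$ gives $\|T^\ast f_1\|_\BMO\ls \|f_1\|_\fz$, which by Lemma~\ref{l3.4}(v) and the median identity of Lemma~\ref{l3.2} transmutes into a local sharp bound of order $\cm f(x)$ at $x$, after a secondary splitting of $f_1$ at level $\cm f(x)$. Combining via Lemma~\ref{l3.1}(iv), applying Proposition~\ref{p3.1}, and invoking the $L^p$-boundedness of $\cm$ (Proposition~\ref{p2.1}) then yields $\|T^\ast f\|_{L^p}\ls \|f\|_{L^p}$. The delicacy lies precisely here: the sublinear nature of $T^\ast$ precludes any $H^1$--$\BMO$ duality argument, and the local splittings must be designed so that the endpoint $\BMO$ hypothesis produces a genuine pointwise $\cm f(x)$-type bound without circular appeal to intermediate $L^p$ boundedness.
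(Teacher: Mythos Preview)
Your treatments of $\mathrm{(iii)}\Rightarrow\mathrm{(iv)}$ and $\mathrm{(iv)}\Rightarrow\mathrm{(i)}$ are essentially the paper's (one caveat: in $\mathrm{(iii)}\Rightarrow\mathrm{(iv)}$ the estimate $\int_{(R_i^\ast)^\complement} T^\ast b_i\,d\rho\lesssim\|b_i\|_{L^1}$ is not literally true, because the ``overlap'' case $R_i\cap R^\ast\neq\emptyset$, $R_i\not\subset R^\ast$ contributes a constant of size $\lz$ to the pointwise bound on $T^\ast b$; this term cannot be integrated over $(R_i^\ast)^\complement$ and must instead be absorbed into the threshold, exactly as in Lemma~\ref{l4.1}).

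The genuine gap is in $\mathrm{(i)}\Rightarrow\mathrm{(ii)}$. Your ``core step'' is the pointwise inequality $\cm_{0,s}^\#(T^\ast f)(x)\lesssim \cm f(x)$, but this cannot be extracted from the integral hypotheses \eqref{4.1}--\eqref{4.2}. For the far piece $f_2=f\chi_{(R^\ast)^\complement}$, the oscillation of $T^\ast f_2$ on $R$ is controlled (via the argument you cite from $\mathrm{(iv)}\Rightarrow\mathrm{(i)}$) only by $\nu_1\|f\|_{L^\fz}+\nu_2\|f\|_{L^\fz}$, \emph{not} by $\cm f(x)$: H\"ormander's condition is an integral condition and gives no pointwise decay of $K(y,\cdot)-K(y',\cdot)$. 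For the near piece, your ``secondary splitting of $f_1$ at level $\cm f(x)$'' leaves the set $\{|f_1|>\cm f(x)\}$ uncontrolled: hypothesis~(i) only says $\|T^\ast(\cdot)\|_\BMO\lesssim\|\cdot\|_{L^\fz}$, which is useless on the large part, and no $L^p$ bound is yet available to appeal to.

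The paper avoids any pointwise inequality. For each level $\az>0$ one applies the Calder\'on--Zygmund decomposition $f=g^\az+b^\az$ with $\|g^\az\|_{L^\fz}\le C_1\az$; then
\[
\cm_{0,s}^\sharp(T^\ast f)\le \cm_{0,s/2}^\sharp(T^\ast g^\az)+\cm_{0,s/2}(T^\ast b^\az),
\]
the first term being $\le 2s^{-1}\|T^\ast g^\az\|_\BMO\le C\az$ by~(i) (so its superlevel set at height $C\az$ is empty), and the second handled distributionally by Lemma~\ref{l4.1} and Lemma~\ref{l3.1}(vii). This yields the weak bound $\|\cm_{0,s}^\sharp(T^\ast f)\|_{L^{p,\fz}}\lesssim\|f\|_{L^p}$, after which Proposition~\ref{p3.2} (the \emph{weak} Fefferman--Stein inequality, not Proposition~\ref{p3.1}) gives $\|T^\ast f\|_{L^{p,\fz}}\lesssim\|f\|_{L^p}$, and Marcinkiewicz interpolation finishes. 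The a~priori membership $T^\ast f\in L^{1,\fz}$ required to invoke Proposition~\ref{p3.2} is itself nontrivial: it is the content of Lemma~\ref{l4.2}, proved for $f\in L^\fz_{c,0}$ by a separate Calder\'on--Zygmund argument, and is not obtainable by ``truncation'' alone.
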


To prove Theorem \ref{t4.1}, we first establish the following lemma
by following some ideas used by Grafakos \cite[Lemma~1]{gra}.

\begin{lem}\label{l4.1}
Let $T^\ast$ and $K$ be as in Theorem \ref{t4.1}. Suppose that
$p\in[1, \fz)$, $\lz>0$, $b\equiv\sum_{i\in I} b_i$ and $\{R_i\}_{i\in I}\subset \crz$
are such that for a fixed positive constant $C_3$ and all $i\in I$, $\supp
b_i\subset R_i$, $\int_Sb_i\,d\rho=0$, $\|b_i\|_{L^p}\le
C_3\lz[\rho(R_i)]^{1/p}$, and $\{R_i\}_{i\in I}$ are
pairwise disjoint. Then
\begin{equation}\label{4.4}
\rho\lf(\lf\{x\notin \bigcup_{i\in I} R_i^\ast:\, T^\ast
b(x)>(C_{\kz_0}C_3\nu_1+3)\lz\r\}\r)\le C_3\lf[\nu_1\wz
C_{\kz_0}+3\nu_2\r] \sum_{i\in I}\rho(R_i),
\end{equation}
where $\kz_0$ is the constant which appears in Lemma \ref{l2.1}, $C_{\kz_0} \equiv  3+\log_2(\kz_0+1)$ and $\wz C_{\kz_0} \equiv
2+\log_2[(4\kz_0+3)\kz_0]$.
\end{lem}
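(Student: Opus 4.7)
The plan is to adapt Grafakos's weak-type approach to maximal singular integrals \cite{gra} for the non-doubling space $(S, d, \rho)$, decomposing $T_R b(x)$ according to how each $R_i$ interacts with $R^\ast$. Fix $x \notin \Omega^\ast := \bigcup_i R_i^\ast$ and $R \in \crz(x)$, and partition $I$ into $I_0(R) := \{i: R_i \subset R^\ast\}$ (which contributes nothing to $T_R b(x)$, as $(R^\ast)^\complement$ misses these sets), $I_1(R) := \{i: R_i \cap R^\ast = \emptyset\}$, and $I_2(R) := I \setminus (I_0(R) \cup I_1(R))$ (the partial-overlap indices). Then
$$T_R b(x) = \sum_{i \in I_1(R)} \int_{R_i} K(x,y) b_i(y)\,d\rho(y) + \sum_{i \in I_2(R)} \int_{R_i \cap (R^\ast)^\complement} K(x,y) b_i(y)\,d\rho(y).$$

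For the $I_1$-sum, I would use $\int_S b_i\,d\rho = 0$ to rewrite each integrand as $[K(x,y) - K(x, x_{R_i})]b_i(y)$. Dominating by absolute values (which no longer depends on $R$) and applying the H\"ormander condition \eqref{4.2} together with $\|b_i\|_{L^1} \le C_3 \lz \rho(R_i)$ (from H\"older's inequality), the $L^1$-norm on $\Omega^{\ast\complement}$ is bounded by $C_3 \nu_2 \lz \sum_i \rho(R_i)$.

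For the more delicate $I_2$-sum, the key geometric observation is that every $i \in I_2(R)$ satisfies $r_{R_i} \le (2\kz_0+1) r_R$: indeed, picking $y^\ast \in R_i \cap R^\ast$ and using $x \in R \subset B(x_R, \kz_0 r_R)$ yield $d(x, y^\ast) \le (2\kz_0+1) r_R$, while $x \notin R_i^\ast$ forces $r_{R_i} \le d(x, R_i) \le d(x, y^\ast)$. Consequently the $I_2$-sets are confined to a bounded annular region $\{y : r_{R_i} \le d(x, y) \le (2\kz_0+1)^2 r_R\}$ around $x$. I would again invoke the zero-mean cancellation to split the integrand into a piece controlled by \eqref{4.2} (contributing another $L^1$-term of order $C_3 \nu_2 \lz \sum_i \rho(R_i)$) and a residual controlled by the symmetric form $\sup_u \sup_r \int_{r < d(u,v) \le 2r}|K(u,v)|\,d\rho(v) \le \nu_1$ of \eqref{4.1}---obtained by renaming variables in the $|K(y,x)|$ term---combined with a dyadic annular decomposition spanning the bounded range of scales. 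This decomposition further separates into a pointwise bound of size $C_{\kz_0} C_3 \nu_1 \lz$ (absorbed into the threshold) and an $L^1$-contribution of order $C_3 \nu_1 \tilde C_{\kz_0} \lz \sum_i \rho(R_i)$, with the logarithmic factors $C_{\kz_0} = 3 + \log_2(\kz_0+1)$ and $\tilde C_{\kz_0} = 2 + \log_2[(4\kz_0+3)\kz_0]$ accounting for the number of dyadic annuli in the relevant range.

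Assembling: one obtains a pointwise decomposition $T^\ast b(x) \le A_0(x) + A_1(x) + A_2(x) + A_3(x)$ with $A_0 \le C_{\kz_0} C_3 \nu_1 \lz$ pointwise and $\|A_j\|_{L^1(\Omega^{\ast\complement})}$ controlled for $j = 1, 2, 3$ by the three contributions above. The event $\{T^\ast b > (C_{\kz_0} C_3 \nu_1 + 3)\lz\}$ forces $A_1 + A_2 + A_3 > 3\lz$, so at least one $A_j$ exceeds $\lz$; a union bound and Chebyshev's inequality then yield \eqref{4.4}. The principal obstacle is the $I_2$-sum: since only an $L^p$-bound on $b_i$ is available, one cannot use any pointwise control of $b_i$, and the proof must carefully balance the $\int b_i = 0$ cancellation, the geometric localization of $R_i$ near $\partial R^\ast$, and the annular integral bounds from \eqref{4.1} to extract the logarithmic constants exactly.
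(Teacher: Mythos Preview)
Your overall strategy matches the paper's (both adapt Grafakos \cite{gra}): partition $I$ by how $R_i$ meets $R^\ast$, use cancellation on the fully exterior pieces, and handle the boundary-straddling pieces by a mix of a pointwise $C_{\kz_0}C_3\nu_1\lz$ term and three $L^1$-controlled functions whose Chebyshev bounds give \eqref{4.4}. However, your treatment of the partial-overlap index set $I_2(R)$ has two genuine gaps.

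\textbf{First gap: the cancellation.} You cannot ``invoke zero-mean cancellation'' directly on $\int_{R_i\cap (R^\ast)^\complement}K(x,y)b_i(y)\,d\rho(y)$, because $b_i\chi_{(R^\ast)^\complement}$ need not have mean zero on $R_i$. The paper's remedy is to subtract the constant
\[
c_i(R)\equiv\frac{1}{\rho(R_i)}\int_{R_i} b_i\,\chi_{(R^\ast)^\complement}\,d\rho,
\]
so that $b_i\chi_{(R^\ast)^\complement}-c_i(R)$ \emph{does} have mean zero on $R_i$, and then observe (via H\"older and $\|b_i\|_{L^p}\le C_3\lz[\rho(R_i)]^{1/p}$) that $|c_i(R)|\le C_3\lz$. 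This splits the integral into $J_1$ (zero-mean, controlled by \eqref{4.2}) and $J_2=\sum_i c_i(R)\int_{R_i}K(x,y)\,d\rho(y)$, where $b_i$ has been replaced by a constant of size $\le C_3\lz$, so only $|K(x,y)|$ remains and \eqref{4.1} becomes usable. Without this step you would need an $L^{p'}$ bound on $K(x,\cdot)$, which is unavailable.

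\textbf{Second gap: the annular geometry.} Your claim that the $I_2$-sets lie in a region with ``bounded range of scales'' is false as stated: the inner radius $r_{R_i}$ can be arbitrarily small compared to $r_R$, so the annulus $\{r_{R_i}\le d(x,y)\le (2\kz_0+1)^2 r_R\}$ has unbounded ratio. The paper resolves this by a dichotomy on $J_2$: set $I_3^{(1)}=\{i:r_R\ge 4\kz_0 r_{R_i}\}$ and $I_3^{(2)}=\{i:r_R<4\kz_0 r_{R_i}\}$. For $i\in I_3^{(1)}$ one shows $r_R/2<d(x,y)<2(\kz_0+1)r_R$ for all $y\in R_i$, so by \emph{disjointness} of the $R_i$'s the whole sum sits inside a single annulus of ratio $4(\kz_0+1)$, yielding the pointwise bound $C_{\kz_0}C_3\nu_1\lz$. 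For $i\in I_3^{(2)}$ one shows $r_{R_i}<d(x,y)\le 2(4\kz_0+3)\kz_0 r_{R_i}$, an annulus (per $i$) of ratio $2(4\kz_0+3)\kz_0$, producing the $L^1$-term $Z_3$ and the constant $\wz C_{\kz_0}$. This split---not a single bounded annulus---is what generates the two logarithmic constants.
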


\begin{proof}
For any fixed $x\notin \bigcup_{i\in I} R_i^\ast$ and $R\in\crz(x)$,
we set ${\rm I}_1(x, R) \equiv \lf\{i\in I:\, R_i\subset R^\ast\r\},$ $
{\rm I}_2(x, R) \equiv \lf\{i\in I:\, R_i\cap R^\ast=\emptyset\r\}$ and
${\rm I}_3(x, R) \equiv \{i\in I:\, R_i\cap R^\ast\neq\emptyset,\, R_i\cap
(R^\ast)^\complement\neq\emptyset\}$. Then, by the definition of
$T_R$,
\begin{equation}\label{4.5}
\lf|T_R b(x)\r|\le \lf|T_R\lf(\sum_{i\in {\rm I}_2(x,
R)}b_i\r)(x)\r| +\lf|T_R\lf(\sum_{i\in {\rm I}_3(x,
R)}b_i\r)(x)\r|.
\end{equation}
Denote by $x_i$ the center of $R_i$. By $\int_Sb_i\,d\rho=0$
and $\supp b_i\subset R_i$, we obtain
\begin{eqnarray*}
\lf|T_R\lf(\sum_{i\in {\rm I}_2(x, R)}b_i\r)(x)\r|
&&=\lf|\sum_{i\in {\rm I}_2(x, R)}\dint_{R_i}[K(x, y)-K(x, x_i)] b_i(y)\,d\rho(y)\r|\\
&&\le\sum_{i\in I}\dint_{R_i}|K(x, y)-K(x, x_i)||b_i(y)|\,d\rho(y)
 \equiv  {\rm Z}_1(x).\noz
\end{eqnarray*}
To estimate the second term in the right-hand side of \eqref{4.5},
for any $i\in {\rm I}_3(x, R)$, we set
$$c_i(R) \equiv  \frac1{\rho(R_i)}
\int_{R_i}b_i(y)\chi_{_{(R^\ast)^\complement}}(y)\,d\rho(y).$$
Notice that
\begin{eqnarray}\label{4.6}
\lf|T_R\lf(\sum_{i\in {\rm I}_3(x, R)}b_i\r)(x)\r|
&&=\lf|\sum_{i\in {\rm I}_3(x, R)}\int_{R_i} K(x, y)b_i(y)
\chi_{_{(R^\ast)^\complement}}(y)\,d\rho(y)\r|\noz\\
&&\le\lf|\sum_{i\in {\rm I}_3(x, R)}\int_{R_i} K(x, y)\lf[b_i(y)
\chi_{_{(R^\ast)^\complement}}(y)-c_i(R)\r]\,d\rho(y)\r|\noz\\
&&\quad+\lf|\sum_{i\in {\rm I}_3(x, R)}\int_{R_i} K(x,
y)c_i(R)\,d\rho(y)\r|\equiv  {\rm J}_1(x)+{\rm J}_2(x).
\end{eqnarray}
For all $i\in {\rm I}_3(x, R)$, by the hypothesis $\|b_i\|_{L^p}\le
C_3\lz[\rho(R_i)]^{1/p}$, we have $|c_i(R)|\le C_3\lz$. From this,
it follows that
\begin{eqnarray*}
{\rm J}_1(x)&&=\lf|\sum_{i\in {\rm I}_3(x, R)}\int_{R_i}\lf[ K(x,
y)-K(x, x_i)\r]\lf[b_i(y)
\chi_{_{(R^\ast)^\complement}}(y)-c_i(R)\r]\,d\rho(y)\r|\\
&&\le {\rm Z}_1(x)+ C_3\lz\sum_{i\in I}\int_{R_i}| K(x, y)-K(x,
x_i)|\,d\rho(y)\equiv {\rm Z}_1(x)+{\rm Z}_2(x).\noz
\end{eqnarray*}
To estimate ${\rm J}_2(x)$, set ${\rm I}_3^{(1)}(x, R) \equiv \{i\in
{\rm I}_3(x, R):\, r_R\ge4\kz_0r_{R_i}\}$ and
$${\rm I}_3^{(2)}(x, R) \equiv \{i\in {\rm I}_3(x, R):\, r_R<4\kz_0r_{R_i}\}.$$
For any $i\in {\rm I}_3(x, R)$, assume that $w_i\in R_i\cap R^\ast$ and
$u_i\in R_i\cap (R^\ast)^\complement$. If $i\in {\rm I}_3^{(1)}(x, R)$, by
the assumption that $x\in R$ and Lemma \ref{l2.1}(i), we obtain
that for all $y\in R_i$,
$$d(x, y)\le d(x, w_i)+d(w_i, y)<(2\kz_0+1)r_R+2\kz_0r_{R_i}<2(\kz_0+1)r_R$$
and
$$d(x, y)\ge d(x, u_i)-d(u_i, y)>r_R-2\kz_0r_{R_i}\ge r_R/2.$$
These facts together with the pairwise disjointness of
$\{R_i\}_{i\in I}$ yield
\begin{eqnarray*}
\lf|\sum_{i\in {\rm I}_3^{(1)}(x, R)}\int_{R_i} K(x, y)c_i(R)\,d\rho(y)\r|
&&\le C_3\lz\sum_{i\in {\rm I}_3^{(1)}(x, R)}\int_{R_i}| K(x, y)|\,d\rho(y)\\
&&\le C_3\lz\int_{r_R/2<d(x, y)\le2(\kz_0+1)r_R}| K(x, y)|\,d\rho(y)\\
&&< [3+\log_2(\kz_0+1)]C_3\nu_1\lz.
\end{eqnarray*}
If $i\in {\rm I}_3^{(2)}(x, R)$ and since $x\in R\setminus \cup_{i\in
I}R_i^\ast$, for all  $y\in R_i$,
$$r_{R_i}<d(x, y)\le d(x, w_i)+d(w_i, y)\le
(2\kz_0+1)r_R+2\kz_0r_{R_i}<2(4\kz_0+3)\kz_0r_{R_i},$$
which implies that
\begin{eqnarray*}
\lf|\sum_{i\in {\rm I}_3^{(2)}(x, R)}\int_{R_i} K(x, y)c_i(R)\,d\rho(y)\r|
&&\le C_3\lz\sum_{i\in I}\int_{r_{R_i}<d(x, y)\le
2(4\kz_0+3)\kz_0r_{R_i}}
| K(x, y)|\chi_{_{R_i}}(y)\,d\rho(y)\\
&& \equiv  {\rm Z}_3(x).
\end{eqnarray*}
Therefore, ${\rm J}_2(x)\le [3+\log_2(\kz_0+1)]C_3\nu_1\lz+{\rm
Z}_3(x)$.

Set $C_{\kz_0} \equiv  3+\log_2(\kz_0+1)$. The estimates of ${\rm
J}_1(x)$ and ${\rm J}_2(x)$ together with \eqref{4.5} and
\eqref{4.6} imply that for all $x\notin\cup_{i\in
I}R_i^\ast$,
\begin{equation}\label{4.7}
T^\ast b(x)\le 2{\rm Z}_1(x)+{\rm Z}_2(x)+C_{\kz_0}C_3\nu_1\lz+{\rm
Z}_3(x).
\end{equation}
By this, \eqref{4.1} and \eqref{4.2}, we obtain
\begin{eqnarray}\label{4.8}
&&\rho\lf(\lf\{x\notin \bigcup_{i\in I} R_i^\ast:\,
T^\ast b(x)>(C_{\kz_0}C_3\nu_1+3)\lz\r\}\r)\noz\\
&&\hs\le\rho\lf(\lf\{x\notin \bigcup_{i\in I} R_i^\ast:\,{\rm
Z}_1(x)>\lz/2\r\}\r)
+\rho\lf(\lf\{x\notin \bigcup_{i\in I} R_i^\ast:\,{\rm Z}_2(x)>\lz\r\}\r)\noz\\
&&\hs\quad+\rho\lf(\lf\{x\notin \bigcup_{i\in I}
R_i^\ast:\,{\rm Z}_3(x)>\lz\r\}\r)\noz\\
&&\hs\le\dfrac{2}{\lz}\sum_{i\in I}\dint_{S\setminus
R_i^\ast}\dint_{R_i}
|K(x, y)-K(x, x_i)||b_i(y)|\,d\rho(y)\,d\rho(x)\noz\\
&&\hs\quad+C_3\sum_{i\in I}\dint_{S\setminus R_i^\ast}
\int_{R_i}| K(x, y)-K(x, x_i)|\,d\rho(y)\,d\rho(x)\noz\\
&&\hs\quad+C_3\sum_{i\in I}\dint_{S\setminus R_i^\ast}
\int_{r_{R_i}<d(x, y)\le 2(4\kz_0+3)\kz_0r_{R_i}}
| K(x, y)|\chi_{_{R_i}}(y)\,d\rho(y)\,d\rho(x) \noz\\
&&\hs\le\dfrac{2\nu_2}{\lz}\sum_{i\in I}\|b_i\|_{L^1}+
{C_3}{\nu_2}\sum_{i\in I}\rho(R_i)
+{C_3}{\nu_1}\Big(2+\log_2[(4\kz_0+3)\kz_0]\Big)\sum_{i\in
I}\rho(R_i).
\end{eqnarray}
The hypothesis  $\|b_i\|_{L^p}\le
C_3\lz[\rho(R_i)]^{1/p}$ implies that
$$\sum_{i\in I}\|b_i\|_{L^1}\le \sum_{i\in I}\|b_i\|_{L^p}
[\rho(R_i)]^{1-1/p} \le C_3\lz\sum_{i\in I}\rho(R_i).$$ Combining
all these facts yields \eqref{4.4}. This finishes the proof.
\end{proof}

\begin{lem}\label{l4.2}
Let $T^\ast$ and $K$ be as in Theorem \ref{t4.1}. If $T^\ast$ is
bounded from $L_c^\fz$ to $\BMO$, then $T^\ast f\in L^{1,\fz}$
for all $f\in L_{c, 0}^\fz$.
\end{lem}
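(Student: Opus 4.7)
The approach is the standard Calderón--Zygmund splitting. Fix $\beta > 0$ and apply Proposition~\ref{p2.2} with $p=1$ at level $\beta/C_5$, where $C_5 \equiv 2(C_{\kz_0}C_1\nu_1+3)$, to write $f = g + \sum_i b_i$. By Remark~\ref{r2.1} we have $\|g\|_{L^\fz} \le C\beta$ and $\int_S g\,d\rho = 0$. Moreover each $R_i$ intersects $\supp f$ and satisfies $\rho(R_i) \le C\|f\|_{L^1}/\beta$; the exponential-growth estimate for $(S, d, \rho)$ therefore forces the parameter $r_{R_i}$ to be bounded uniformly, so each $R_i$ lies in a bounded neighborhood of $\supp f$ and $g\in L_c^\fz$.

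For the bad part, Lemma~\ref{l4.1} (with $p=1$, $C_3=C_1$) combined with Lemma~\ref{l2.1}(ii) directly yields $\rho(\{T^\ast b > \beta/2\}) \le C\|f\|_{L^1}/\beta$. For the good part, the hypothesis gives $\|T^\ast g\|_{\BMO} \le C\beta$, and I split $S$ into a Calderón--Zygmund set $R_g^\ast \supset \supp g$ and its complement. On the bounded piece $R_g^\ast$, the $\BMO$ bound combined with John--Nirenberg-type arguments (in the spirit of the proof of Lemma~\ref{l3.3}) shows $T^\ast g\,\chi_{R_g^\ast}\in L^p$ for every $p<\fz$, hence in $L^{1,\fz}$ with a norm depending on $f$. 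On the complement, I exploit $\int_S g\,d\rho=0$ and the H\"ormander condition \eqref{4.2} applied to $R_g$: for any $R\in\crz(x)$ with $x\notin R_g^\ast$, writing
\[
T_R g(x) = \dint_{(R^\ast)^\complement \cap R_g}[K(x,y)-K(x,y_0)]g(y)\,d\rho(y) + K(x,y_0)\dint_{(R^\ast)^\complement \cap R_g}g\,d\rho
\]
for a fixed $y_0\in R_g$, the first summand is bounded uniformly in $R$ by a function of $x$ whose $L^1((R_g^\ast)^\complement)$ norm is controlled by Hörmander, while the second summand vanishes in the two extreme regimes (either $R^\ast\cap R_g=\emptyset$, or $R^\ast\supset R_g$, the latter using the mean-zero of $g$).

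The main obstacle is the ``intermediate'' case where $R^\ast$ partially overlaps $R_g$. In that regime the remainder $K(x,y_0)\int_{R^\ast\cap R_g} g\,d\rho$ is not a priori in $L^1$ at infinity as a function of $x$, since $|K(\cdot,y_0)|$ is only annulus-integrable via \eqref{4.1} and its global $L^1$ norm diverges. Overcoming this is the technical heart of the argument: I expect to exploit the fact that such intermediate $R$'s live at a single scale $r_R\sim d(x,R_g)$, so the sup over $R$ produces only $O(1)$ relevant annuli, and then to combine this localization with the $\BMO$ bound from the hypothesis to absorb the residual contribution. Summing the bad- and good-part estimates then yields $\rho(\{T^\ast f > \beta\}) \le C(f)/\beta$ and hence $T^\ast f \in L^{1,\fz}$.
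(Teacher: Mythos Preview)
Your Calder\'on--Zygmund splitting is the right instinct, and the bad-part via Lemma~\ref{l4.1} is fine. But the good-part treatment has a genuine gap and a misdiagnosed obstacle.

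\textbf{The near-piece gap.} The set $R_g$ you choose for the near piece depends on $\beta$: each stopping cube $R_i$ intersects $\supp f$ and satisfies $\rho(R_i)\lesssim\|f\|_{L^1}/\beta$, so in this exponential-growth space $r_{R_i}\lesssim C_f+\log(1/\beta)$, and a \emph{single} Calder\'on--Zygmund set $R_g$ covering $\supp g$ must then have $\rho(R_g^\ast)$ growing polynomially in $1/\beta$ (possibly much faster than $1/\beta$). The $\BMO$ hypothesis gives only $\|T^\ast g\|_{\BMO}\lesssim\beta$, which controls oscillation; you have no control on the average $(T^\ast g)_{R_g^\ast}$, hence no bound on $\|(T^\ast g)\chi_{R_g^\ast}\|_{L^{1,\fz}}$ that is uniform in $\beta$. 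Your claimed ``norm depending on $f$'' is therefore unjustified, and the conclusion $\rho(\{T^\ast f>\beta\})\le C(f)/\beta$ does not follow. The paper avoids this by fixing once and for all a Calder\'on--Zygmund set $R_0\supset\supp f$, independent of the level, and applying the $L^1_{\loc}$ consequence of the $\BMO$ hypothesis to $T^\ast f$ itself on $(R_0)^\ast$: this yields a single finite constant depending only on $f$.

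\textbf{The intermediate case.} Your ``main obstacle'' is misdiagnosed. You try to put $K(x,y_0)\int_{R^\ast\cap R_g}g\,d\rho$ into $L^1$ at infinity, which indeed fails---but that is the wrong goal. Since $\|g\|_{L^\fz}\lesssim\beta$, the size condition~\eqref{4.1} gives a \emph{pointwise} bound: when $R_g$ is partially cut by $R^\ast$ and $x\in R\setminus R_g^\ast$, the region $R_g\cap(R^\ast)^\complement$ lies in a bounded number of dyadic annuli about $x$, so $|T_Rg(x)|\le\|g\|_{L^\fz}\int_{\text{annulus}}|K(x,y)|\,d\rho(y)\lesssim\nu_1\beta$, which is absorbed directly into the threshold. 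No appeal to the $\BMO$ hypothesis is needed here. This is precisely the paper's estimate of ${\rm Y}_2$; the Calder\'on--Zygmund decomposition in the paper's proof enters only in the far-region analysis, to manufacture this bounded good part $g^\az$ for the size-condition argument while the corresponding bad part $b^\az$ is handled by the Lemma~\ref{l4.1} machinery (the terms ${\rm Z}_1',{\rm Z}_2',{\rm Z}_3'$).
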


\begin{proof}
Without loss of generality, we may assume that $\supp f\subset
R_0\in\crz$. Since $f\in  L_{c, 0}^\fz$, we have that $T^\ast
f\in\BMO$, which implies the local integrability of $T^\ast f$ and
$$\sup_{\az>0}\az \rho\lf(\lf\{x\in (R_0)^\ast:\,
T^\ast f(x)>\az\r\}\r)\le \|(T^\ast
f)\chi_{(R_0)^\ast}\|_{L^1}<\fz.$$ Therefore, it suffices to prove
that
\begin{eqnarray}\label{4.9}
&&\sup_{\az>0}\az \rho\lf(\lf\{x\notin (R_0)^\ast:\, T^\ast
f(x)>\az\r\}\r)<\fz.
\end{eqnarray}
To this end, fix $x\notin (R_0)^\ast$ and $R\in\crz(x)$. If
$R_0\subset R^\ast$, then $T_R f(x)=0$. If $R_0\cap
R^\ast=\emptyset$, then we denote by $x_0$ the center of $R_0$ and
use the fact that $\int_S f\,d\rho=0$ to obtain
\begin{eqnarray*}
|T_R f(x)|&&\le \int_{R_0} |K(x, y)-K(x, x_0)|
|f(y)|\,d\rho(y) \equiv {\rm Y}_1(x).
\end{eqnarray*}
If $R_0\cap R^\ast\neq\emptyset$ and $R_0\cap
(R^\ast)^\complement\neq\emptyset$, we take a Calder\'on--Zygmund decomposition
of $f$ at level $\az$ and
write $f=g^\az+b^\az$ with $b^\az\equiv\sum_{i\in I_\az} b_i^\az$,
$\|g^\az\|_{L^\fz}\le C_1\az$, $\supp b_i^\az\subset R_i^\az$,
$\{R_i^\az\}_{i\in I_\az}\subset\crz$ are mutually disjoint,
$\int_Sb_i^\az\,d\rho=0$, $\az\le
\frac1{\rho(R_i^\az)}\int_{R_i^\az}|f|\,d\rho\le {C_1}\az$ and
$\|b_i^\az\|_{L^1}\le C_1\az\rho(R_i^\az)$, where $I_\az$ is
a certain index set and $C_1$ is the constant which appears in Proposition \ref{p2.2}.
Then
\begin{eqnarray*}
|T_R f(x)|&&=\lf| \int_{R_0\cap (R^\ast)^\complement} K(x, y)f(y)\,d\rho(y)\r|\\
&&\le \lf| \int_{R_0\cap (R^\ast)^\complement} K(x,
y)g^\az(y)\,d\rho(y)\r|
+\sum_{i\in I_\az}\lf| \int_{R_0\cap (R^\ast)^\complement}
K(x, y)b_i^\az(y)\,d\rho(y)\r|\\
&& \equiv  {\rm Y}_2(x)+{\rm Y}_3(x).
\end{eqnarray*}
Observe that if $R_0\cap R^\ast\neq\emptyset$ and $R_0\cap
(R^\ast)^\complement\neq\emptyset$, then for all $y\in R_0\cap
(R^\ast)^\complement$,
$$ \frac{r_R+r_{R_0}}2\le d(x, y)\le d(x, y_1)+d(y_1, y)
\le (1+2\kz_0) r_R+2\kz_0 r_{R_0}<(2\kz_0+1)(r_R+r_{R_0}),$$
where $y_1$ is some fixed point in $R_0\cap R^\ast$. Thus, for
all $x\notin (R_0)^\ast$,
\begin{eqnarray*}
{\rm Y}_2(x)&&\le \int_{\frac{r_R+r_{R_0}}2\le d(x, y)<
(2\kz_0+1)(r_R+r_{R_0})} |K(x, y)||g^\az(y)|\,d\rho(y)
\le \nu_1 {C_1}\az\lf[3+\log_2(2\kz_0+1)\r].
\end{eqnarray*}
By proceeding as in the proof of Lemma \ref{l4.1}, we obtain that
for all $x\notin \cup_{i\in I_\az}(R_i^\az)^\ast$,
\begin{eqnarray*}
{\rm Y}_3(x)&&\le 2{\rm Z}_1'(x)+{\rm Z}_2'(x)
+C_{\kz_0} {C_1} \nu_1\az+{\rm Z}_3'(x),
\end{eqnarray*}
where $C_{\kz_0}$ is as in Lemma \ref{l4.1}, and
$$ {\rm Z}_1'(x) \equiv  \sum_{i\in I_\az}\dint_{R_i^\az}
|K(x, y)-K(x, x_i)||b_i^\az(y)|\,d\rho(y),$$
$${\rm Z}_2'(x) \equiv  {C_1}\az\sum_{i\in I_\az}
\int_{R_i^\az}|K(x, y)-K(x, x_i)|\,d\rho(y),$$
$${\rm Z}_3'(x) \equiv  {C_1}\az\sum_{i\in I_\az}
\int_{r_{R_i^\az}<d(x, y)\le 2(4\kz_0+3)\kz_0r_{R_i^\az}}
| K(x, y)|\chi_{_{R_i^\az}}(y)\,d\rho(y).$$
Combining all these estimates, we obtain that for all $x\notin \cup_{i\in I_\az}(R_i^\az)^\ast$
$$T^\ast f(x) \le{\rm Y}_1(x)+\nu_1 {C_1} \az[3+\log_2(2\kz_0+1)]+ 2{\rm Z}_1'(x)+{\rm Z}_2'(x) +C_{\kz_0} {C_1}\nu_1\az+ {\rm Z}_3'(x),$$
and hence for all $\az>0$,
\begin{eqnarray*}
&&\rho\big( \{x\notin (R_0)^\ast:\, T^\ast f(x)>(C_{\kz_0}\nu_1 {C_1}+4+ \nu_1 {C_1}[3+\log_2(2\kz_0+1)])\az \}\big)\\
&&\hs\le \rho\big( \cup_{i\in I_\az}(R_i^\az)^\ast\big )+ \rho\big( \{x\notin (R_0)^\ast:\,{\rm Y}_1(x)>\az \}\big) \\
&&\hs \quad+ \rho\big( \{x\notin \cup_{i\in I_\az}(R_i^\az)^\ast:\,
2{\rm Z}_1'(x)+{\rm Z}_2'(x) +{\rm Z}_3'(x)>3\az \}\big).
\end{eqnarray*}
By Lemma \ref{l2.1}(ii), the condition $\az\le
\frac1{\rho(R_i^\az)}\int_{R_i^\az}|f|\,d\rho$ and
the pairwise disjointness of $\{R_i^\az\}_{i\in I_\az}$, we have
$ \rho\lf( \cup_{i\in I_\az}(R_i^\az)^\ast\r)
\le\kz_0 \sum_{i\in I_\az} \rho(R_i^\az)\le\kz_0
\az^{-1}\|f\|_{L^1},$ where $\kz_0$ is the constant which appears in Lemma \ref{l2.1}.
For the second term, applying \eqref{4.2} yields
\begin{eqnarray*}
\rho\lf(\lf\{x\notin (R_0)^\ast:\,{\rm Y}_1(x)>\az\r\}\r)
&&\le \az^{-1}\int_{x\notin (R_0)^\ast} \int_{R_0} |K(x, y)-K(x, x_0)|
|f(y)|\,d\rho(y)\,d\rho(x)\\
&&\le \az^{-1}\nu_2\|f\|_{L^1}.
\end{eqnarray*}
Finally, an argument similar to \eqref{4.8}
yields that
$$ \rho\lf(\lf\{x\notin \cup_{i\in I_\az}(R_i^\az)^\ast:\,
2{\rm Z}_1'(x)+{\rm Z}_2'(x) +{\rm Z}_3'(x)>3\az\r\}\r) \le {C_1}\lf[\nu_1\wz
C_{\kz_0}+3\nu_2\r] \|f\|_{L^1},$$
where $\wz C_{\kz_0}$ is the constant which appears in Lemma \ref{4.1}.
Combining all these estimates gives \eqref{4.9}. Hence, $T^\ast f\in L^{1,\fz}$.
\end{proof}

\begin{proof}[Proof of Theorem \ref{t4.1}]
To show that (i) implies (ii), by the Marcinkiewicz interpolation theorem
(see \cite[Theorem~1.4.19]{g}) and the fact that
$L_{c,0}^\fz$ is dense in $L^q$ when $q\in(1,\infty)$ (see \cite[Lemma~5.3]{va}),
it suffices to show that
for any $p\in(1, \fz)$ and all $f\in L_{c,0}^\fz$,
\begin{equation}\label{4.10}
   \|T^\ast f\|_{L^{p,\infty}}\ls\|f\|_{L^p}.
\end{equation}

To prove \eqref{4.10}, we fix $p\in(1, \infty)$ and $f\in L_{c,0}^\fz$.
By applying Proposition \ref{p2.2} and
Remark \ref{r2.1}, we know that for any given $\az>0$, there exist a
positive constant $C_1$ and a sequence of pairwise disjoint sets
$\{R_i^\az\}_{i\in I_\az}\subset\crz$ such that $f$ is decomposed into $f=g^\az+b^\az=g^\az+\sum_{i\in
I_\az} b_i^\az$ such that
\begin{enumerate}
\vspace{-0.25cm}
\item[(a)] $|g^\az(x)|\le {C_1}\az$ for almost all $x\in S$ and
$\|g^\az\|_{L^\fz}\le\|f\|_{L^\fz}$;
\vspace{-0.25cm}
\item[(b)] for all $i\in I_\az$, $\supp b_i^\az\subset R_i^\az\in\crz$
and $\int_S b_i^\az\,d\rho=0$;
\vspace{-0.25cm}
\item[(c)] for all $i\in I_\az$,
$\az\le\{\frac1{\rho(R_i^\az)}\int_{R_i^\az}|f|^p\,d\rho\}^{1/p}\le {C_1}\az$;
\vspace{-0.25cm}
\item[(d)] for all $i\in I_\az$, $\|b_i^\az\|_{L^p}
\le {C_1}\az[\rho(R_i^\az)]^{1/p}$,
\vspace{-0.25cm}
\end{enumerate}
where $I_\az$ is an index set and $C_1$ is the constant which appears in Proposition \ref{p2.2}.
By proceeding as in the proof of
\cite[(3.5)]{hyy}, we obtain that for all $s\in(0, 1)$, $\az>0$, and $x\in S$,
\begin{equation*}
\cm_{0, s}^\sharp(T^\ast f)(x) =\cm_{0, s}^\sharp\big(T^\ast(g^\az+b^\az)\big)(x) \le \cm_{0, s/2}^\sharp
(T^\ast g^\az)(x)+\cm_{0, s/2}(T^\ast b^\az)(x);
\end{equation*}
therefore, with $C_{\kz_0}$ as in Lemma \ref{l4.1} and $C_4 \equiv  \|T^\ast\|_{L_c^\infty\to\BMO}$,
\begin{eqnarray*}
&&\rho(\{x\in S:\, \cm_{0, s}^\sharp (T^\ast f)(x)>(2C_4{C_1} s^{-1}+C_{\kz_0}{C_1}\nu_1+3)\az\})\\
&&\hs\le\rho(\{x\in S:\, \cm_{0, s/2}^\sharp(T^\ast
g^{\az})(x)>2C_4{C_1} s^{-1}\az\})\nonumber\\
&&\quad\hs+\rho(\{x\in S:\, \cm_{0, s/2}(T^\ast b^{\az})(x)>(C_{\kz_0}{C_1}\nu_1+3)\az\})
 \equiv {\rm I}+{\rm II}.\nonumber
\end{eqnarray*}
By (i), Lemma \ref{l3.4}(iv) and Property (a), we obtain
\begin{equation*}%\label{5.16}
  \|\cm_{0, s/2}^\sharp(T^\ast g^{\az})\|_{L^\fz}\le 2s^{-1} \|(T^\ast g^{\az})^\sharp\|_{L^\fz}
  \le 2s^{-1}\|T^\ast\|_{L_c^\infty\to\BMO} \|g^{\az}\|_{L^\fz}\le 2C_4{C_1} s^{-1}\az,
\end{equation*}
which implies that ${\rm I}=0$.
By  Lemma \ref{l3.1}(vii),  the term ${\rm II}$ can be estimated by
\begin{eqnarray*}
{\rm II}&&\le 2\|\cm\|_{L^1\to L^{1,\fz}} s^{-1}\rho\lf(\lf\{x\in
S:\, T^\ast b^{\az}(x)>(C_{\kz_0}{C_1}\nu_1+3)\az\r\}\r),
\end{eqnarray*}
then, applying Lemma \ref{l4.1}, Lemma \ref{l2.1}(ii)  and Property (c) yields that
\begin{eqnarray*}
{\rm II}&&\ls s^{-1}\lf[\rho\Big(\bigcup_{i\in
I_{\az}}\lf(R_i^{\az}\r)^\ast\Big)+\rho\Big(\Big\{x\notin
\bigcup_{i\in I_{\az}}\lf(R_i^{\az}\r)^\ast:\,
 T^\ast b^{\az}(x)>(C_{\kz_0}C_3\nu_1+3)\az\Big\}\Big)\r]\noz\\
&&\ls s^{-1} \sum_{i\in I_{\az}}\rho\lf(R_i^{\az}\r)\ls s^{-1} \az^{-p}\|f\|_{L^p}^p.\noz
\end{eqnarray*}
The estimates of ${\rm I}$ and ${\rm II}$ above imply that for any given $s\in(0, 1)$,
\begin{equation}\label{4.11}
\|\cm_{0, s}^\sharp (T^\ast f)\|_{L^{p,\fz}}\ls s^{-1/p}\|f\|_{L^p}.
\end{equation}
By the assumption $f\in L_{c,0}^\infty$ and Lemma \ref{l4.2}, $T^*f\in L^{1,\fz}$. Then by applying
 Proposition \ref{p3.2} to the function $T^*f$ and by \eqref{4.11}, we obtain that for $s\in(0, 1/2]$
such that $s<(2^23^p C_2)^{-1} $,
\begin{eqnarray*}
\|T^\ast f\|_{L^{p,\fz}} &&\le C \|\cm_{0, s}^\sharp (T^\ast
f)\|_{L^{p,\fz}}\le C \|f\|_{L^p}.
\end{eqnarray*}
This proves \eqref{4.10}. Thus, (ii) holds.

It is obvious that (ii) implies (iii). Now we assume that (iii) holds for
an index $p\in (1,\fz)$ and show that
(iv) holds. To this end, for any given $f\in L^1$ and $\az>0$,
we use Proposition \ref{p2.2}
to obtain a sequence of mutually disjoint sets,
$\{R_i^\az\}_{i\in I_\az}\subset\crz$, and a decomposition of $f$ as
$f=g^\az+b^\az=g^\az+\sum_{i\in I_\az}b_i^\az$ where $\|g^\az\|_{L^\fz}\ls\az$, every $b_i^\az$ is supported on $R_i^\az$
and has integral $0$,
$\frac1{\rho(R_i^\az)}\int_{R_i^\az}|f|\,d\rho\approx\az$ and
$\|b_i^\az\|_{L^1}\ls\az\rho(R_i^\az)$.
By Lemma \ref{l4.1} and Lemma \ref{l2.1}(ii),
there exists a sufficiently large positive constant $C$ such that for all $\az>0$,
\begin{eqnarray*}
&&\rho\big(\lf\{x\in S:\, T^\ast f(x)>(C+1) \az\r\}\big)\\
&&\hs\le
\rho\big(\lf\{x\in S:\, T^\ast g^\az(x)>\az\r\}\big)
+\rho\lf(\bigcup_{i\in I_\az}\lf(R_i^\az\r)^\ast\r)\\
&&\hs\quad+\rho\lf(\lf\{x\notin \bigcup_{i\in
I_\az}\lf(R_i^\az\r)^\ast:\,
T^\ast b^\az(x)>C\az\r\}\r)\\
&&\hs \ls \az^{-p}\|T^\ast g^\az\|_{L^p}^p+\sum_{i\in
I_\az}\rho(R_i^\az).
\end{eqnarray*}
Notice that $\sum_{i\in I_\az}\rho(R_i^\az)\ls \az^{-1}\|f\|_{L^1}$.
Using the $L^p$-boundedness of $T^\ast$ (by (iii)) and the
properties of $g^\az$, we  have
\begin{eqnarray*}
\az^{-p}\|T^\ast g^\az\|_{L^p}^p \ls \az^{-p} \|T^\ast\|_{L^p\to
L^p}^p\|g^\az\|_{L^p}^p
\ls\az^{-1}\|g^\az\|_{L^1}\ls\az^{-1}\|f\|_{L^1}.
\end{eqnarray*}
Combining all the above estimates yields that
$\|T^\ast f\|_{L^{1,\fz}}\ls\|f\|_{L^1}$.
Hence, (iv) holds.

Finally, we show that (iv) implies (i). Fix $\sz\in(0, 1)$.
By Lemma \ref{l3.3}, it suffices to prove that for all  $f\in L_c^\fz$,
$\|T^\ast f\|_{\ast,\,\sz}\ls\|f\|_{L^\fz}$. To this end, for any given
$f\in L_c^\fz$ and $R\in\crz$, we decompose $f$ into
$f=f\chi_{_{R^\ast}}+f\chi_{_{S\setminus R^\ast}} \equiv  f_1+f_2$.
Notice that for all $c\in \cc$ and $x\in S$, $\lf|T^\ast
f(x)-c\r|\le T^\ast f_1(x)+\lf|T^\ast f_2(x)-c\r|.$ Then, for all
$R\in\crz(x)$,
\begin{eqnarray}\label{4.12}
&&\inf_{c\in\cc}\dfrac1{\rho(R)}\dint_R|T^\ast f(x)-c|^\sz\,d\rho(x)\noz\\
&&\hs\le\dfrac1{\rho(R)}\dint_R|T^\ast f_1(x)|^\sz\,d\rho(x)
+\inf_{c\in\cc}\dfrac1{\rho(R)}\dint_R|T^\ast
f_2(x)-c|^\sz\,d\rho(x)  \equiv  {\rm Z}_1+{\rm Z}_2.
\end{eqnarray}
Using $\sz\in(0, 1)$ and the hypothesis that $T^\ast$ is bounded
from $L^1$ to $L^{1,\fz}$ together with Lemma \ref{l2.1}(ii), we
obtain
\begin{eqnarray*}
{\rm Z}_1 &&=\dfrac1{\rho(R)}\dint_0^\fz \sz t^{\sz-1}\rho\lf(\lf\{
x\in R:\, T^\ast f_1(x)>t\r\}\r)\,dt\\
&&\le \dfrac1{\rho(R)}\lf\{\dint_0^{\|f\|_{L^\fz}} \sz
t^{\sz-1}\rho(R)\,dt +\dint_{\|f\|_{L^\fz}}^\fz \sz t^{\sz-1}
\dfrac{\|T^\ast\|_{L^1\to L^{1,\fz}}\|f_1\|_{L^1}}{t}\,dt\r\}\ls \|f\|_{L^\fz}^{\sz}.
\end{eqnarray*}
By this and \eqref{4.12}, the proof of (i) is reduced to the estimate ${\rm Z}_2\ls \|f\|_{L^\fz}^\sz$.
Since $f_2\in L_c^\infty$, an argument similar to the
one used for ${\rm Z}_1$ yields $|T^\ast f_2|^\sz\in L_\loc^1$;
thus there exists some $z_R\in R$ such that $T^\ast
f_2(z_R)<\fz$. Notice that for all $x\in R$ and $\wz R\in\crz(x)$,
Lemma \ref{l2.1}(i) implies that $\{y\in S:\, d(y, x)>r_{\wz R}\}
=({\wz R}^\ast)^\complement \cup \{ y\in {\wz R}^\ast: \,  d(y, x)>r_{\wz R}\}$.
From this, it follows that for all $x\in R$, we can write $T^\ast f_2(x)$ as follows:
\begin{eqnarray}\label{4.13}
T^\ast f_2(x)=&&\sup_{\wz R\in\crz(x)}\lf|\dint_{d(y, x)>r_{\wz
R}}K(x, y)f_2(y)\,d\rho(y)\r.\noz\\
&&\lf. -\dint_{\gfz{d(y, x)>r_{\wz R}}{y\in \wz
R^\ast}}K(x, y)f_2(y)\,d\rho(y)\r|.
\end{eqnarray}
In particular, the equality \eqref{4.13} holds for $T^\ast f_2(z_R)$. Thus, for all $x\in R$,
\begin{eqnarray*}
&&|T^\ast f_2(x)-T^\ast f_2(z_R)|\\
&&\hs\le\lf|\sup_{\wz R\in\crz(x)}\bigg|\dint_{d(y, x)>r_{\wz R}}
K(x, y)f_2(y)\,d\rho(y)\bigg| -\sup_{\wz
R\in\crz(z_R)}\bigg|\dint_{d(y, z_R)>r_{\wz R}}
K(z_R, y)f_2(y)\,d\rho(y)\bigg|\r|\\
&&\hs\quad+\sup_{\wz R\in\crz(x)} \lf|\dint_{\gfz{d(y, x)>r_{\wz
R}}{y\in \wz R^\ast}}K(x, y)f_2(y)\,d\rho(y)\r| +\sup_{\wz
R\in\crz(z_R)}
\lf|\dint_{\gfz{d(y, z_R)>r_{\wz R}}{y\in \wz R^\ast}}K(z_R, y)f_2(y)\,d\rho(y)\r|\\
&&\hs \equiv  {\rm L}_{1}+{\rm L}_{2}+{\rm L}_{3}.
\end{eqnarray*}
To see this, by symmetry, it suffices to show that
$T^\ast f_2(x)-T^\ast f_2(z_R)\le L_1+L_2+L_3$,
which follows by first writing $T^\ast f_2(x)$ and $T^\ast f_2(z_R)$
as in \eqref{4.13}, then applying
$$\sup_{i\in \Lambda}|a_i-b_i| \le \sup_{i\in \Lambda}|a_i|
+\sup_{i\in \Lambda}|b_i|$$
to the expression of  $T^\ast f_2(x)$, and
$\sup_{i\in \Lambda}|a_i-b_i|\ge \sup_{i\in \Lambda}|a_i|-\sup_{i\in \Lambda}|b_i|$
in the expression of $T^\ast f_2(z_R)$,
where $\Lambda$ denotes an index set which might be uncountable.

When $d(y, x)>r_{\wz R}$ and $y\in \wz R^\ast$,
by $x\in R\cap \wz R$ and Lemma \ref{l2.1}(i), we have
$r_{\wz R}<d(y, x)\le (2\kz_0+1)r_{\wz R}$,
which combined with \eqref{4.1} implies that
 ${\rm L}_{2}\ls \nu_1\|f\|_{L^\fz}$.
Similarly, ${\rm L}_{3}\ls\nu_1\|f\|_{L^\fz}$.
To estimate ${\rm L}_1$, by the properties of Calder\'on--Zygmund
sets, we obtain
$$\sup_{\wz R\in\crz(x)}\lf|\dint_{d(y, x)>r_{\wz R}}K(x, y)f_2(y)\,d\rho(y)\r|
=\sup_{\ez>0}\lf|\dint_{d(y, x)>\ez}K(x,
y)f_2(y)\,d\rho(y)\r|.$$
By this and the inequality $\sup_{i\in \Lambda}|a_i-b_i|\ge \big|\sup_{i\in \Lambda}
|a_i|-\sup_{i\in \Lambda}|b_i|\big|$, we obtain that
\begin{eqnarray*}
{\rm L}_{1}&&\le\sup_{\ez>0} \lf|\dint_{d(y, x)>\ez} K(x,
y)f_2(y)\,d\rho(y)
-\dint_{d(y, z_R)>\ez} K(z_R, y)f_2(y)\,d\rho(y)\r|\\
&&\le \sup_{\ez>0}\dint_{{d(x, y)>\ez}, \,{d(z_R, y)>\ez}}
| K(x, y)-K(z_R, y)||f_2(y)|\,d\rho(y)\\
&&\hs+\sup_{\ez>0}\dint_{d(x, y)>\ez\ge d(z_R, y)}|K(x, y)f_2(y)|\,d\rho(y)\\
&&\hs+\sup_{\ez>0}\dint_{ d(z_R, y)>\ez\ge d(x, y)}|K(z_R,
y)f_2(y)|\,d\rho(y)  \equiv  {\rm J}_1+{\rm J}_2+{\rm J}_3.
\end{eqnarray*}
From \eqref{4.2} and $\supp f_2\subset (R^\ast)^\complement$,
it follows that ${\rm J}_1\ls\nu_2\|f\|_{L^\fz}$.
If $x\in R$,   $y\notin R^\ast$ and $d(x, y)>\ez\ge d(z_R, y)$,
by Lemma \ref{l2.1}(i), we have $r_R<d(y, R)\le d(y,z_R)\le\ez $ and
$$ d(x, y)\le d(x, z_R)+d(z_R, y)<2\kz_0r_R+\ez<(2\kz_0+1)\ez,$$
which together with \eqref{4.1} implies that ${\rm
J}_2\ls\nu_1\|f\|_{L^\fz}$. Similarly, ${\rm
J}_3\ls\nu_1\|f\|_{L^\fz}$. Thus, ${\rm L}_1=\sum_{i=1}^3 {\rm J}_i\ls \|f\|_{L^\fz}$.
Combining the estimates of ${\rm L}_1, {\rm L}_2$ and ${\rm L}_3$ yields
 ${\rm Z}_2\ls\|f\|_{L^\fz}^\sz$. This finishes the proof of (iv) implies (i),
 and hence the proof of Theorem \ref{t4.1}.
\end{proof}

Applying Theorem \ref{t4.1} and the Calder\'on--Zygmund
decomposition, we obtain the following result.

\begin{thm}\label{t4.2}
Let $T$ be the integral operator associated with a kernel $K$ satisfying
\eqref{4.1} and \eqref{4.2}. If $T$ is bounded
 on $L^2$, then the maximal singular integral
$T^\ast$ defined as in \eqref{4.3} is bounded from $L^1$ to
$L^{1,\fz}$, from $L^p$ to $L^p$ for all $p\in(1, \fz)$, and
from $L_c^\fz$ to $\BMO$.
\end{thm}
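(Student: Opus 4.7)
The plan is to invoke Theorem \ref{t4.1}, which reduces the statement to verifying any one of the four equivalent boundedness properties of $T^\ast$. The natural target, given that $T$ is bounded on $L^2$, is $L^2$-boundedness of $T^\ast$, after which Theorem \ref{t4.1} will automatically supply the other three conclusions.

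The first step is to promote $L^2$-boundedness of $T$ to weak-type $(1,1)$ of $T$ (and, by Marcinkiewicz interpolation together with duality applied to the adjoint kernel $K^\vee(x,y) \equiv K(y,x)$, which also satisfies \eqref{4.1} and \eqref{4.2}, to $L^p$-boundedness for all $p \in (1,\infty)$). For $f \in L^1$ and $\alpha > 0$, decompose $f = g^\alpha + b^\alpha$ via Proposition \ref{p2.2}; then $\rho(\{|Tg^\alpha|>\alpha\})$ is controlled by Chebyshev together with the $L^2$-bound on $T$ and the properties $\|g^\alpha\|_{L^\infty} \le C\alpha$ and $\|g^\alpha\|_{L^1} \le C\|f\|_{L^1}$, while the bad part is handled by a computation essentially identical to Lemma \ref{l4.1}, applied with $T$ in place of $T^\ast$ and using only the H\"ormander condition \eqref{4.2}.

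The second step is to upgrade $L^2$-boundedness of $T$ to $L^2$-boundedness of $T^\ast$ via a Cotlar-type pointwise inequality
$$T^\ast f(x) \le C\bigl\{[\cm(|Tf|^q)(x)]^{1/q} + \cm f(x)\bigr\}, \qquad q \in (0,1).$$
The starting point is the a.e.\ identity, valid for $R \in \crz(x)$ and a.e.\ $z \in R$,
$$T_R f(x) = Tf(z) - T(f\chi_{R^\ast})(z) + \bigl[T(f\chi_{(R^\ast)^\complement})(x) - T(f\chi_{(R^\ast)^\complement})(z)\bigr].$$
Raising to the $q$-th power and averaging in $z \in R$, the first term is bounded by $\cm(|Tf|^q)(x)$, the second by $C(\cm f(x))^q$ via Kolmogorov's inequality applied with the weak-$(1,1)$ bound from Step 1, and the third (H\"ormander error) is to be absorbed. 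The desired $L^2$-boundedness of $T^\ast$ then follows from that of $\cm$ (Proposition \ref{p2.1}) and of $T$; Theorem \ref{t4.1} finally delivers all four listed conclusions.

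The main obstacle is the H\"ormander error in the Cotlar inequality: since \eqref{4.2} is only an integrated condition, one obtains directly only the crude bound $\nu_2 \|f\|_{L^\infty}$, which is too weak for an $L^2$ estimate. To obtain the required $\cm f(x)$ control, I would decompose $(R^\ast)^\complement$ into dyadic annular scales built from the Calder\'on--Zygmund family (using Lemmas \ref{l2.1} and \ref{dyadicgrid}), and on each scale combine the size bound \eqref{4.1} with the cancellation from \eqref{4.2} to produce scale-by-scale contributions whose sum is controlled by $\cm f(x)$.
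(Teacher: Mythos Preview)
Your plan has a genuine gap in Step~2. The strong Cotlar inequality
\[
T^\ast f(x)\le C\bigl\{[\cm(|Tf|^q)(x)]^{1/q}+\cm f(x)\bigr\}
\]
cannot be extracted from hypotheses \eqref{4.1} and \eqref{4.2} alone. The H\"ormander error
\[
\int_{(R^\ast)^\complement}|K(x,y)-K(z,y)|\,|f(y)|\,d\rho(y)
\]
is the issue: to dominate it by $\cm f(x)$ via an annular decomposition you need a per-scale bound of the form
\(
\int_{2^k r<d(x,y)\le 2^{k+1}r}|K(x,y)-K(z,y)|\,d\rho(y)\le c_k
\)
with $\sum_k c_k<\infty$. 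But \eqref{4.2} only gives the global bound $\nu_2$ over all of $(R^\ast)^\complement$, with no decay in $k$, and \eqref{4.1} likewise gives $\nu_1$ on each annulus with no decay. ``Combining'' these two scale-free bounds cannot manufacture a summable sequence; you would need a pointwise regularity hypothesis on $K$ (or at least an integrated condition with quantitative decay across scales), which is strictly stronger than what is assumed here. So the only honest bound on the H\"ormander error is $\nu_2\|f\|_{L^\infty}$, exactly what you flagged as ``too weak.''

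The paper's proof exploits precisely this weak bound. Lemma~\ref{l4.3} records the crude Cotlar inequality
\[
T^\ast g(x)\le \cm(Tg)(x)+\bigl[\nu_2+\kappa_0^{1/2}\|T\|_{L^2\to L^2}\bigr]\|g\|_{L^\infty},
\]
and the point is that this is \emph{enough} if one aims directly at the weak-$(1,1)$ bound for $T^\ast$ rather than its $L^2$ bound. In the Calder\'on--Zygmund decomposition $f=g^\alpha+b^\alpha$ at height $\alpha$, the good part satisfies $\|g^\alpha\|_{L^\infty}\le C\alpha$, so the $L^\infty$ error term in Lemma~\ref{l4.3} becomes a harmless additive $C\alpha$; then $\rho(\{T^\ast g^\alpha>C'\alpha\})$ is controlled via the $L^2$ boundedness of $\cm\circ T$. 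The bad part is handled by Lemma~\ref{l4.1}. This yields $T^\ast:L^1\to L^{1,\infty}$ directly, and Theorem~\ref{t4.1} then supplies the remaining conclusions. In short: don't try to prove $L^2$-boundedness of $T^\ast$ first; prove weak-$(1,1)$ for $T^\ast$ by applying the weak Cotlar inequality only to the bounded good part.
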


To prove Theorem \ref{t4.2}, we need the following Cotlar-type
inequality.

\begin{lem}\label{l4.3}
Under the assumptions of Theorem \ref{t4.2}, for all $g\in L_c^\fz$ and $x\in S$,
\begin{equation}\label{4.14}
T^\ast g(x)\le \cm(Tg)(x) +\lf[\nu_2+\kz_0^{1/2}\|T\|_{L^2\to
L^2}\r]\|g\|_{L^\fz},
\end{equation}
where $\cm$ is the Hardy--Littlewood maximal operator defined in \eqref{2.1}.
\end{lem}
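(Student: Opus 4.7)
The plan is a Cotlar-type linearization. Fix $x \in S$, $g \in L_c^\fz$ and an arbitrary $R \in \crz(x)$; by the definition \eqref{4.3} of $T^\ast$, it suffices to bound $|T_R g(x)|$ by the right-hand side of \eqref{4.14} uniformly in $R$. I would split $g = g_1 + g_2$ with $g_1 \equiv g\chi_{_{R^\ast}}$ and $g_2 \equiv g\chi_{_{(R^\ast)^\complement}}$. Since $x \in R \subset R^\ast$ lies outside $\supp g_2$, we have $Tg_2(x) = T_R g(x)$; by the same reasoning, $Tg_2(z) = T_R g(z)$ for every $z \in R$. Subtracting and using $Tg_2 = Tg - Tg_1$ pointwise gives the identity
$$T_R g(x) = Tg(z) - Tg_1(z) + \dint_{(R^\ast)^\complement}[K(x,y)-K(z,y)]g(y)\,d\rho(y)\qquad\forall\, z\in R.$$

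Next I would invoke H\"ormander's condition \eqref{4.2}: since both $x$ and $z$ lie in $R$, the last integral is bounded in absolute value by $\nu_2 \|g\|_{L^\fz}$, uniformly in $z \in R$. Taking absolute values in the identity above and averaging over $z \in R$ (the left-hand side is constant in $z$, so it survives the average unchanged), one sees that the first term yields $\frac{1}{\rho(R)}\int_R|Tg|\,d\rho$, which is at most $\cm(Tg)(x)$ because $R \in \crz(x)$. Hence
$$|T_R g(x)| \;\le\; \cm(Tg)(x) + \dfrac1{\rho(R)}\dint_R |Tg_1(z)|\,d\rho(z) + \nu_2 \|g\|_{L^\fz}.$$

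The remaining $L^1$-average is handled by Cauchy--Schwarz, the $L^2$-boundedness of $T$, and the doubling relation $\rho(R^\ast)\le \kz_0\rho(R)$ from Lemma \ref{l2.1}(ii):
$$\dfrac1{\rho(R)}\dint_R |Tg_1(z)|\,d\rho(z) \;\le\; \rho(R)^{-1/2}\|T\|_{L^2\to L^2}\|g_1\|_{L^2} \;\le\; \|T\|_{L^2\to L^2}\|g\|_{L^\fz}\lf(\dfrac{\rho(R^\ast)}{\rho(R)}\r)^{1/2}\le \kz_0^{1/2}\|T\|_{L^2\to L^2}\|g\|_{L^\fz},$$
after which taking the supremum over $R \in \crz(x)$ yields \eqref{4.14}. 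The only point requiring care is to align the variable slots in H\"ormander's condition \eqref{4.2} with the present situation: we integrate $|K(x,y)-K(z,y)|$ in the exterior variable $y$ while $x,z$ sit in the first slot of $K$ and vary in $R$, which is exactly the second summand of \eqref{4.2} after a relabeling of dummy variables. Beyond this bookkeeping, every ingredient is standard and already furnished by Lemma \ref{l2.1} and the hypotheses on $K$.
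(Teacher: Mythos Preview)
Your proof is correct and follows essentially the same Cotlar-type argument as the paper: both split $g$ along $R^\ast$, use the identity $T_R g(x) = T(g\chi_{(R^\ast)^\complement})(x)$, compare with $T(g\chi_{(R^\ast)^\complement})(z)$ via H\"ormander's condition \eqref{4.2}, average over $z\in R$ to produce $\cm(Tg)(x)$, and control the local piece by Cauchy--Schwarz, the $L^2$-bound for $T$, and Lemma~\ref{l2.1}(ii). Your write-up is slightly more explicit in deriving the pointwise identity before applying the triangle inequality, but the content is identical.
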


\begin{proof}
We only give an outline of the proof because of its similarity to the
argument used in \cite[Theorem~1]{gra}; see also \cite[p.\,295]{g}.
Indeed, for all $x\in S$,  $R\in\crz(x)$, $z\in R$  and all $g\in
L_c^\fz$, we use H\"ormander's condition \eqref{4.2} to obtain
\begin{eqnarray*}
|T_Rg(x)|
&&\le|T(g\chi_{(R^\ast)^\complement})(x)-T(g\chi_{(R^\ast)^\complement})(z)|
+|Tg(z)|+|T(g\chi_{R^\ast})(z)|\\
&&\le\nu_2\|g\|_{L^\fz}+|Tg(z)|+|T(g\chi_{R^\ast})(z)|.
\end{eqnarray*}
Taking the integral average over $R$ with respect to the variable
$z$ in both sides of this inequality yields that
$$|T_Rg(x)|\le\nu_2\|g\|_{L^\fz}+\dfrac1{\rho(R)}\dint_R|Tg(z)|\,d\rho(z)+
\frac1{\rho(R)}\dint_R|T(g\chi_{R^\ast})(z)|\,d\rho(z).$$
By \eqref{2.1},  H\"older's inequality, the
$L^2$-boundedness of $T$ and Lemma \ref{l2.1}(ii), we obtain
\begin{eqnarray*}
|T_Rg(x)|&&\le\nu_2\|g\|_{L^\fz}+\cm(Tg)(x)+
\lf\{\frac1{\rho(R)}\dint_R|T(g\chi_{R^\ast})(z)|^2\,d\rho(z)\r\}^{1/2}\\
&&\le  \cm(Tg)(x)+\lf[\nu_2+\kz_0^{1/2} \|T\|_{L^2\to
L^2}\r]\|g\|_{L^\fz},
\end{eqnarray*}
which completes the proof.
\end{proof}

\begin{proof}[Proof of Theorem \ref{t4.2}]
By Theorem \ref{t4.1}, it suffices to show that $T^\ast$ is bounded
from $L^1$ to $L^{1,\fz}$. To this end, for any $f\in L^1$
with bounded support and $\az>0$,
we decompose $f$ at level $\az$ into
$f=g^\az+b^\az=g^\az+\sum_{i\in I_\az} b_i^\az,$
where $I_\az$ is a certain index set,
$\|g^\az\|_{L^\fz}\le {C_1}\az$, $\supp b_i^\az\subset R_i^\az$,
$\{R_i^\az\}_{i\in I_\az}\subset\crz$ are mutually disjoint,
$\int_Sb_i^\az\,d\rho=0$, $\az\le
\frac1{\rho(R_i^\az)}\int_{R_i^\az}|f|\,d\rho\le {C_1}\az$ and
$\|b_i^\az\|_{L^1}\le {C_1}\az\rho(R_i^\az)$. Here $C_1$ is
the constant which appears in Proposition \ref{p2.2}. For
$C_0>C_{\kz_0}{C_1}\nu_1+3$ sufficiently large, which will be
determined later, we have
\begin{eqnarray*}
\rho\lf(\lf\{x\in S:\, T^\ast f(x)>C_0\az\r\}\r)
&&\le \rho\lf(\lf\{x\in S:\, T^\ast g^\az(x)>(C_0-C_{\kz_0}{C_1}\nu_1-3)\az\r\}\r)\\
&&\hs+\rho\lf(\lf\{x\in S:\, T^\ast
b^\az(x)>(C_{\kz_0}{C_1}\nu_1+3)\az\r\}\r) \equiv  {\rm Z}_1+{\rm Z}_2.
\end{eqnarray*}
Here $C_{\kz_0}$ is as in Lemma \ref{l4.1}. To estimate ${\rm Z}_1$, the inequality \eqref{4.14} applied to to $g^\az$ gives that
$$T^\ast g^\az(x)\le\cm(Tg^\az)(x)+{C_1}\lf[\nu_2+\kz_0^{1/2}
\|T\|_{L^2\to L^2}\r]\az.$$ Set $\wz C \equiv
{C_1}[\nu_2+\kz_0^{1/2}\|T\|_{L^2 \to
L^2}]+C_{\kz_0}{C_1}\nu_1+3$. If  $C_0>\wz C$, by the
$L^2$-boundedness of $\cm$ and $T$, the facts that
$\|g^\az\|_{L^\fz}\ls \az$ and $\|g^\az\|_{L^1}\ls\|f\|_{L^1}$, we obtain
$${\rm Z}_1\ls \az^{-2} \|\cm(Tg^\az)\|_{L^2}^2\ls
\az^{-2} \|g^\az\|_{L^2}^2\ls \az^{-1}\|f\|_{L^1}.$$
Lemma \ref{l4.1} implies that ${\rm Z}_2\ls\sum_{i\in
I_\az}\rho(R_i^\az)\ls \az^{-1}\|f\|_{L^1}$.
By the estimates of ${\rm Z}_1$ and ${\rm Z}_2$, we have that $T^\ast$
maps all $L^1$ functions with bounded support into $L^{1,\fz}$.
A standard density argument implies the boundedness of $T^\ast$
from $L^1$ to $L^{1,\fz}$. This concludes the proof.
\end{proof}

\section{Applications to Multiplier Operators on {$ax+b$\,--Groups}}\label{s5}

The aim of  this section is to apply the results in Section
\ref{s4} to the multipliers of a distinguished Laplacian $\Delta$ on
$(S, d, \rho)$. Let us begin with some known facts related to the
integration formulae and
spherical analysis on $S$; for details we refer the reader to
\cite{ADY, A95, dr, dr1, helgason}.

A {\em radial function} on $S$ is a function that depends only
on the distance from the identity.
If $f$ is radial and $f\in C_c^\infty(S)$, then we have
the following integration formula:
\begin{equation}\label{lambda-formula}
\int_S f(x)\,d\lz(x)=  C \int_0^\infty f(r) A(r)\,dr,
\end{equation}
where $C$ is a positive constant depending only on $S$,
$A(r)= 4^n \sinh ^n \lf(\frac r2\r) \cosh^n\lf(\frac r2\r)$ for all $ r>0$ and
$\lambda$ denotes the {\em left Haar measure}. One easily checks that
\begin{equation}\label{misA}
A(r)\ls \lf(\frac r{1+r}\r)^n e^{nr}\quad\quad\forall\ r>0.
\end{equation}
A radial function $\phi$ is {\em spherical } if it is an
eigenfunction of the {\em Laplace-Beltrami
operator} $\cl \equiv -\mathrm{div}\cdot\mathrm{grad}$
and $\phi(e)=1$. For $s\in\cc$, let $\phi_s$
be the {\em spherical function with eigenvalue} $s^2+n^2/4$.
It is known (\cite{A95}) that the spherical
function $\phi_0$ satisfies the estimate
\begin{equation}\label{phi0}
0<\phi_0(r)\ls (1+r) e^{-nr/2} \quad\quad\forall\ r>0,
\end{equation}
and that for every radial function $f\in C_c^\infty(S)$,
\begin{equation}\label{rho-formula}
\int_S \dz^{1/2}(x) f(x)\,d\rho(x)= \int_0^\infty \phi_0(r) f(r)A(r)\,dr.
\end{equation}

The {\em spherical Fourier transform} of a radial function $f$ in $L^1(\lambda)$
is defined
by the formula
$$\mathcal H f(s) \equiv  \int_S \phi_s(x) f(x)\,d\lz(x) \quad\quad
\forall \ s\in\cc.$$
For radial functions $f\in C_c^\infty(S)$, a {\em Plancherel formula} holds:
\begin{equation}\label{plancherel-formula}
\int_S |f(x)|^2\,d\lz(x)=C\int_0^\infty |\mathcal H f(s)|^2 |\mathbf c (s)|^{-2}\,ds,
\end{equation}
where $C$ is a positive constant depending only on $S$, and $|\mathbf c (s)|^{-2}\,ds$
denotes the {\em Plancherel measure} which satisfies the following estimates
{(see Chapter IV of \cite{helgason})}:
\begin{eqnarray}\label{plancherel-measure}
|\mathbf{c}(s)|^{-2} \le\left\{\begin{array}{ll}
C|s|^2\quad\quad&\text{if $|s|\le1$},\\
C|s|^n&\text{if $|s|>1$},
\end{array}\right.
\end{eqnarray}
where $C$ is a positive constant independent of $s$.
In particular, when $n=1$, the estimate \eqref{plancherel-measure} becomes
\begin{equation}\label{plancherel-measure'}
|\mathbf{c}(s)|^{-2} \le C \min\{|s|^2,\, |s|\} \qquad\forall\, s\in\mathbb \rr^+\,,
\end{equation}
where $C$ is a positive constant independent of $s$.

Denote by $\mathcal A$ the {\em Abel transform} and by $\mathcal A^{-1}$
the {\em inverse Abel transform}. If $n$ is even, then
\begin{equation}\label{abel-even}
\mathcal A^{-1} f(r) = (2\pi)^{-n/2} \lf(-\frac1{\sinh r}
\frac\partial {\partial r}\r)^{n/2} f(r)
\quad\quad\forall\ r>0,
\end{equation}
and if $n$ is odd, then for all $r>0$,
\begin{equation}\label{abel-odd}
\mathcal A^{-1} f(r) = (2\pi)^{-n/2} \int_r^\infty
\lf[\lf(-\frac1{\sinh s} \frac\partial {\partial s}\r)^{(n+1)/2} f(s)\r]
(\cosh s- \cosh r)^{-1/2} \sinh s\, ds.
\end{equation}
Denote by $\mathcal F (g)$ or $\widehat {g}$ the {\em Fourier transform}
of $g$ on $\rr$,
namely, $\mathcal F g(s)=\int_\rr g(r) e^{-isr}\,dr$.
It is known that $\mathcal H= \mathcal F \circ \mathcal A$, and hence
$\mathcal H^{-1}=\mathcal A^{-1}\circ \mathcal F^{-1}$.

Consider the following {\em basis of left-invariant vector fields of the Lie algebra of $S$}:
$$X_0 \equiv  a\partial_a,\quad
X_i \equiv  a\partial_{x_i}, \quad i=1,\, 2,\, \cdots,\, n.$$
The {\em Laplacian }$\Delta
\equiv  -\sum_{i=0}^nX_i^2$ is a left-invariant essentially selfadjoint operator on $L^2(\rho)$.
The operator $\Delta$ has a special relationship with the Laplace--Beltrami operator
$\cl$ associated with the Riemannian structure of $S$.
Indeed, if we denote by $\cl_n$ the {\em shifted  operator} $\cl-n^2/4$,
it is known that
\begin{equation}\label{L-BL}
\dz^{-1/2} \bigtriangleup \dz^{1/2} f = \cl_n f
\end{equation}
for all smooth radial functions $f$ on $S$ (see \cite{A95}), where $\delta$ denotes the {\em modular function}.
The {\em spectra} of both $\Delta$ on $L^2(\rho)$ and
$\cl_n$ on $L^2(\lz)$ are $[0, \infty)$.
Let $E_{\Delta}$ and $E_{\cl_n}$ be the {\em spectral
resolutions of identity} for which
$\bigtriangleup=\int_0^\fz t\,dE_{\bigtriangleup}(t)$ and
$\cl_n=\int_0^\infty t\, dE_{\cl_n}(t).$
For each bounded measurable function $m$ on $\rr^+$, the {\em operators} $m(\bigtriangleup)$
and $m(\cl_n)$, spectrally defined by
\begin{equation*}
m(\bigtriangleup)=\dint_0^\fz m(t)\,dE_\bigtriangleup (t) \quad \mbox{ and }\quad
m(\cl_n)=\dint_0^\fz m(t)\,dE_{\cl_n} (t),
\end{equation*}
are respectively bounded on $L^2(\rho)$ and $L^2(\lz)$ by the spectral theorem.
By \eqref{L-BL} and the spectral theorem,
we see that for all radial functions $f\in C_c^\infty(S)$,
\begin{equation*}
\dz^{-1/2} m(\bigtriangleup) \dz^{1/2} f= m(\cl_n) f.
\end{equation*}
Denote by $k_{m(\bigtriangleup)}$ the {\em convolution kernel} of
$m(\bigtriangleup)$, namely, for all $f\in C_c^\fz(S)$,
\begin{equation}\label{convolution-ker}
m(\bigtriangleup)f(x)=\dint_S f(xy^{-1})k_{m(\bigtriangleup)}(y)\,d\rho(y)
\quad \quad \forall\ x\notin\supp f.
\end{equation}
As in \eqref{convolution-ker}, denote by $k_{m(\cl_n)}$
the {\em convolution kernel} of $m(\cl_n)$.
It was proved in \cite{ADY, A95} that for all bounded measurable function $m$
on $\rr^+$, the convolution kernel $k_{m(\cl_n)}$ is radial,
\begin{equation}\label{sphericaltransform-kernel}
k_{m(\bigtriangleup)} =\dz^{1/2} k_{m(\cl_n)} \,\mbox{ and }\,
\mathcal H k_{m(\cl_n)}(s)= m(s^2) \quad\quad\forall\,\, s\in\rr^+.
\end{equation}
Let $K_{m(\bigtriangleup)}$ be the {\em integral kernel} of
$m(\bigtriangleup)$, namely, for all $f\in C_c^\fz(S)$,
\begin{equation}\label{integral-ker}
m(\bigtriangleup)f(x)=\dint_S K_{m(\bigtriangleup)}(x, y)f(y)\,d\rho(y)
\quad \quad \forall\ x\notin\supp f.
\end{equation}
In view of \eqref{convolution-ker} and \eqref{integral-ker},
by changing variables and using the left-invariant property of $\lz$
and the right-invariant property of $\rho$,
we obtain that for all $x$, $y\in S$,
\begin{equation}\label{kernelrelation}
K_{m(\bigtriangleup)}(x, y)=k_{m(\bigtriangleup)}(y^{-1}x)\dz(y).
\end{equation}

For any $s\in(0, \fz)$, we denote by $H^s(\rr)$ the {\em Sobolev
space} $W^{s, 2}(\rr)$ of order $s$ on $\rr$. Let $\phi\in
C_c^\fz(\rr^+)$ be a function supported in $[1/4, 4]$ such that
\begin{equation}\label{phi-decom}
\sum_{j\in\zz}\phi(2^{-j}t)=1
\quad\forall\ t\in\rr^+.
\end{equation}
For any given $s_0$, $s_\fz\in\rr^+$, a bounded measurable function
$m$ on $\rr^+$ is said to satisfy a {\em mixed Mihlin-H\"ormander
condition of order $(s_0, s_\fz)$} if
\begin{equation*}
\|m\|_{s_0} \equiv \sup_{t<1}\|m(t\cdot)\phi(\cdot)\|_{H^{s_0}(\rr)}<\fz
\ \mbox{ and }\
\|m\|_{s_\fz} \equiv \sup_{t\ge1}\|m(t\cdot)\phi(\cdot)\|_{H^{s_\fz}(\rr)}<\fz.
\end{equation*}
For any $j\in\zz$ and any
bounded measurable function $m$ on $\rr^+$, we
define $m_j$ by
\begin{equation}\label{m_j}
m_j(t) \equiv  m(2^jt)\phi(t)\qquad \forall \,t\in\rr^+.
\end{equation}
Obviously, we have
\begin{equation}\label{oper-decom}
m(\bigtriangleup)=\sum_{j\in\zz}m_j(2^{-j}\bigtriangleup).
\end{equation}
As in \eqref{convolution-ker} and \eqref{integral-ker}, we denote by
$k_{m_j(2^{-j}\bigtriangleup)}$ and $K_{m_j(2^{-j}\bigtriangleup)}$ the {\em convolution kernel} and the {\em integral kernel} of
$m_j(2^{-j}\bigtriangleup)$, respectively.

Assume that $m$ satisfies a mixed
Mihlin-H\"ormander condition of order $(s_0, s_\fz)$ with $s_0>3/2$
and $s_\fz>\max\{3/2, (n+1)/2\}$. Choose $\sz>0$ small enough such
that  $s_0>3/2+\sz$ and $s_\fz>\max\{3/2, (n+1)/2\}+\sz$.
Hebisch and Steger \cite[Theorems~2.4 and 6.1]{hs} proved that there
exists a positive constant $C$ such that for all $j\in\zz$ and $y\in S$,
\begin{eqnarray}\label{size-ker}
\dint_{S} |K_{m_j(2^{-j}\bigtriangleup)}(x, y)|(1+2^{j/2}d(x,
y))^\sz\, d\rho(x) \le \left\{\begin{array}{ll}
C\|m\|_{s_0}\quad\quad&\text{if $j\le0$},\\
C\|m\|_{s_\fz}&\text{if $j>0$};
\end{array}\right.
\end{eqnarray}
and that for all $y$, $z\in S$,
\begin{eqnarray}\label{hormander-ker}
&&\dint_{S} |K_{m_j(2^{-j}\bigtriangleup)}(x, y)-K_{m_j(2^{-j}
\bigtriangleup)}(x, z)|\, d\rho(x)\noz\\
&&\hs\le \left\{\begin{array}{ll}
C2^{j/2}d(y, z)\|m\|_{s_0}\quad&\text{if $j\le0$},\\
C2^{j/2}d(y, z)\|m\|_{s_\fz}&\text{if $j>0$}.
\end{array}\right.
\end{eqnarray}
From \eqref{size-ker} and \eqref{hormander-ker}, it is easy to deduce
that
\begin{equation}\label{hormander}
\sup_{R\in\crz}\sup_{y,\,z\in R}\dint_{S\setminus R^\ast}
|K_{m(\bigtriangleup)}(x, y)-K_{m(\bigtriangleup)}(x, z)|\,
d\rho(x)<\fz;
\end{equation}
see \cite[Remark 1.4]{hs}. From the proofs of \cite[Theorems~2.4 and 6.1]{hs},
it follows that \eqref{size-ker} and
\eqref{hormander-ker} still hold if we interchange the two variables of
$K_{m_j(2^{-j}\bigtriangleup)}$. Thus, $K_{m(\bigtriangleup)}$
satisfies H\"ormander's condition \eqref{4.2}.

The estimates \eqref{size-ker} and \eqref{hormander-ker} imply that
the operator
$m(\bigtriangleup)$ is bounded from $L^1$ to $L^{1,\fz}$
and bounded on $L^p$ for all $p\in(1,\fz)$ \cite[Theorem~2.4]{hs}
and that it is also bounded from $H^1$ to $L^1$ and
from $L^\fz$ to $\BMO$ \cite[Proposition~2.4]{va}.

We now consider the boundedness of the maximal singular integral operator
$\lf(m(\bigtriangleup)\r)^\ast$ as defined in \eqref{4.3}.

\begin{thm}\label{t5.1}
Let $m$ satisfy a mixed Mihlin-H\"ormander condition of order $(s_0,
s_\fz)$ with $s_0>3/2$ and $s_\fz>\max\{3/2, (n+1)/2\}$. Then the
maximal singular integral $\lf(m(\bigtriangleup)\r)^\ast$ is bounded
from $L^1$ to $L^{1,\fz}$, from $L_c^\fz$ to $\BMO$ and bounded
on $L^p$ for all $p\in(1, \fz)$.
\end{thm}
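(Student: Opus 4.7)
\textbf{Proof plan for Theorem \ref{t5.1}.}
The plan is to apply Theorem~\ref{t4.2} to $T\equiv m(\bigtriangleup)$. Three ingredients are needed: (a) $L^2$-boundedness of $m(\bigtriangleup)$; (b) H\"ormander's condition \eqref{4.2} for $K_{m(\bigtriangleup)}$; and (c) the integral size condition \eqref{4.1} for $K_{m(\bigtriangleup)}$. Ingredient (a) is immediate from the spectral theorem, because the dyadic decomposition \eqref{phi-decom} together with the Sobolev embedding $H^{s_0}(\rr)\hookrightarrow L^\fz(\rr)$ (recall $s_0>3/2$) and the analogous one for $s_\fz$ force $\|m\|_{L^\fz(\rr^+)}<\fz$. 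Ingredient (b) is precisely \eqref{hormander}, obtained from \eqref{hormander-ker} after summation over $j\in\zz$; as noted in the discussion preceding the theorem, the same argument carries over after swapping the two variables of $K_{m_j(2^{-j}\bigtriangleup)}$, so both halves of \eqref{4.2} hold.

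The content of Theorem~\ref{t5.1} is therefore ingredient (c), whose proof will be based on a scale-by-scale estimate (this is the ``Proposition~\ref{p5.1}'' announced in the introduction). The goal is to establish, for some $\sz>0$ and a constant $C$ independent of $j$, $y$ and $r$, the annular bound
\begin{equation*}
\dint_{r<d(x,y)\le 2r}\big[|K_{m_j(2^{-j}\bigtriangleup)}(x, y)|+|K_{m_j(2^{-j}\bigtriangleup)}(y, x)|\big]\,d\rho(x)\le C\min\big\{(2^{j/2}r)^{\sz},\,(2^{j/2}r)^{-\sz}\big\}\big(\|m\|_{s_0}+\|m\|_{s_\fz}\big).
\end{equation*}
The factor $(2^{j/2}r)^{-\sz}$ in the ``high-frequency'' regime $2^{j/2}r\ge 1$ is immediate from the weighted $L^1$ estimate \eqref{size-ker}, since on the annulus $(1+2^{j/2}d(x,y))^{\sz}\ge (2^{j/2}r)^{\sz}$. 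The opposite factor $(2^{j/2}r)^{\sz}$ in the ``low-frequency'' regime $2^{j/2}r<1$ should follow by subtracting a suitably chosen reference value $K_{m_j(2^{-j}\bigtriangleup)}(x,y_0)$, which vanishes after integration over a symmetric annulus, and invoking the H\"ormander-type smoothness bound \eqref{hormander-ker}. Summing the resulting geometric series in $j$ against a fixed $r$ then yields \eqref{4.1}. With (a), (b), (c) in hand, Theorem~\ref{t4.2} delivers all three conclusions of Theorem~\ref{t5.1}.

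The main obstacle is the term involving $|K_{m_j(2^{-j}\bigtriangleup)}(y,x)|$. By \eqref{kernelrelation} this equals $|k_{m_j(2^{-j}\bigtriangleup)}(x^{-1}y)|\,\dz(x)$, so the required annular estimate now involves the \emph{convolution} kernel evaluated at $x^{-1}y$ weighted by the exponentially growing modular factor $\dz(x)=a^{-n}$; neither \eqref{size-ker} nor \eqref{hormander-ker} can be applied directly. The natural route is to analyze $k_{m_j(2^{-j}\bigtriangleup)}$ through its relation \eqref{sphericaltransform-kernel} with the radial kernel $k_{m_j(2^{-j}\cl_n)}$ of the shifted Laplace--Beltrami operator, and to exploit the integration formulae \eqref{lambda-formula} and \eqref{rho-formula}, the Plancherel identity \eqref{plancherel-formula}, the bounds \eqref{plancherel-measure}--\eqref{plancherel-measure'}, the pointwise estimate \eqref{phi0} on $\phi_0$, and the inverse Abel transform formulas \eqref{abel-even}--\eqref{abel-odd}. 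The point is that the factor $\dz^{1/2}$ in the identity $k_{m_j(2^{-j}\bigtriangleup)}=\dz^{1/2}k_{m_j(2^{-j}\cl_n)}$ ought to compensate exactly the weight $\dz(x)$ arising from the change of variables, so that a scale-by-scale bound of the same form as above can be recovered. Carrying out this delicate cancellation uniformly over $j\in\zz$ and over all annular scales $r>0$ is what I expect to be the technical heart of the proof.
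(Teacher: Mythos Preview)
Your overall strategy --- verify (a), (b), (c) and then invoke Theorem~\ref{t4.2} --- is exactly the paper's, and your treatment of (a), (b), and the high-frequency half of (c) (the bound $(2^{j/2}r)^{-\sz}$ from \eqref{size-ker}) is correct.

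The gap is in the low-frequency regime $2^{j/2}r<1$. Your ``subtraction trick'' does not work: after writing $K_{m_j}(x,y)=[K_{m_j}(x,y)-K_{m_j}(x,y_0)]+K_{m_j}(x,y_0)$, the difference is indeed controlled by \eqref{hormander-ker}, but the remainder $\int_{r<d(x,y)\le 2r}|K_{m_j}(x,y_0)|\,d\rho(x)$ has no reason to vanish. There is no symmetry of the annulus, and no cancellation property of $K_{m_j}$, that kills this term; you are essentially left with the original integral at a nearby base point. In fact the annular size bound at low frequencies cannot be extracted from \eqref{size-ker} and \eqref{hormander-ker} alone --- those estimates are integrated over all of $S$ and carry no information about how much mass of $K_{m_j}(\cdot,y)$ sits near the diagonal.

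The paper handles (c) differently. First, a change of variables using \eqref{kernelrelation} and right/left invariance shows that \emph{both} halves of \eqref{4.1} reduce to the single convolution-kernel estimate
\[
\sup_{\ez>0}\int_{\ez<d(x,e)\le 2\ez}|k_{m(\bigtriangleup)}(x)|\,d\rho(x)<\fz,
\]
so there is no separate ``transposed kernel'' difficulty --- your concern about the modular weight $\dz(x)$ disappears in this reduction. Second, for the low-frequency sum $2^{j/2}\ez<1$ the paper does not use \eqref{hormander-ker} at all. Instead it writes $m_j(2^{-j}\bigtriangleup)=f^{(j)}(2^{-j/2}\sqrt{\bigtriangleup})$, decomposes $f^{(j)}(2^{-j/2}\cdot)=\sum_{\ell\ge 0}h_{\ell,j}$ with $\supp\widehat{h_{\ell,j}}\subset[-2^{\ell-j/2},2^{\ell-j/2}]$ (Lemma~\ref{l5.2}), and then uses two facts: (i) finite speed of propagation via the inverse Abel transform, so that $k_{h_{\ell,j}(\sqrt{\bigtriangleup})}$ is supported in $B(e,2^{\ell-j/2})$ and the annular integral vanishes unless $2^{\ell-j/2}>\ez$; (ii) a Cauchy--Schwarz plus Plancherel argument (Lemma~\ref{l5.1}) that bounds the remaining annular integrals by $\ez^{(n+1)/2}$ or $\ez^{3/2}$ times an $L^2$-type norm of $h_{\ell,j}$. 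The Sobolev decay $2^{-s\ell}$ from Lemma~\ref{l5.2} then makes the double sum in $(j,\ell)$ converge precisely under the hypotheses $s_0>3/2$ and $s_\fz>\max\{3/2,(n+1)/2\}$. This spectral/Plancherel input is the genuine technical core; your last paragraph points toward the right tools but for the wrong reason.
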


Theorem \ref{t5.1} follows immediately from Theorem \ref{t4.2} if
we know that $K_{m(\bigtriangleup)}$ satisfies
\eqref{4.1} and \eqref{4.2}. We already noticed that the kernel
$K_{m(\bigtriangleup)}$ satisfies H\"ormander's condition \eqref{4.2}.
Condition ({4.1}) for $K_{m(\bigtriangleup)}$ is equivalent
to the following estimate:
\begin{equation}\label{size-convolution-ker}
\sup_{\ez>0}\int_{\ez<d(y, e)\le 2\ez} |k_{m(\bigtriangleup)}(y)|\,d\rho(y) <\infty.
\end{equation}
To see this, by using the right-invariant property of $\rho$ and
the left-invariant property of $\lz$, we have that for all $x,y\in S$,
$$\dz(y)d\rho(y)=d\lz(y)=d\rho(y^{-1})=d\rho(y^{-1}x)\,\,\mbox{ and }
\,\, d(x, y)=d(y^{-1}x, e).$$
We then apply  \eqref{kernelrelation} to obtain
\begin{eqnarray*}
\int_{\ez<d(x, y)\le 2\ez} |K_{m(\bigtriangleup)}(x, y)|\,d\rho(y)
&& = \int_{\ez<d(x, y)\le 2\ez} |k_{m(\bigtriangleup)}(y^{-1}x)|\dz(y)\,d\rho(y)\\
&&= \int_{\ez<d(y^{-1}x, e)\le 2\ez} |k_{m(\bigtriangleup)}(y^{-1}x)|
\,d\rho(y^{-1}x)\\
&&= \int_{\ez<d(y, e)\le 2\ez} |k_{m(\bigtriangleup)}(y)|\,d\rho(y).
\end{eqnarray*}
Similarly, by \eqref{kernelrelation} and
$$\dz(y)d\rho(x)=\dz(y)[\dz(x)]^{-1}d\lz(x)=[\dz(y^{-1}x)]^{-1} d\lz(x)
=[\dz(y^{-1}x)]^{-1} d\lz(y^{-1}x)=d\rho(y^{-1}x),$$
we have
\begin{eqnarray*}
\int_{\ez<d(x, y)\le 2\ez} |K_{m(\bigtriangleup)}(x, y)|\,d\rho(x)
&& = \int_{\ez<d(x, y)\le 2\ez} |k_{m(\bigtriangleup)}(y^{-1}x)|\dz(y)\,d\rho(x)\\
&&= \int_{\ez<d(y^{-1}x, e)\le 2\ez} |k_{m(\bigtriangleup)}(y^{-1}x)|
\,d\rho(y^{-1}x)\\
&&= \int_{\ez<d(y, e)\le 2\ez} |k_{m(\bigtriangleup)}(y)|\,d\rho(y).
\end{eqnarray*}
Therefore, it suffices to show \eqref{size-convolution-ker}.
To this end, we will use some ideas from the proof of \cite[Theorem~4.3]{va2}.
Let us start with an integral estimate of the kernel which
is more delicate than the one proved in \cite[Lemma~4.5]{va2}.

\begin{lem}\label{l5.1}
Let $f$ be a bounded even function on $\rr$ such
that $\supp \widehat f \subset [-r, r]$.
Then, there exists a positive constant $C$ independent of $r$ such
that $k_{f(\sqrt\bigtriangleup)}$ satisfies the following:
\begin{enumerate}
\vspace{-0.2cm}
 \item[\rm(i)] if $\ez\ge r$, then $\int_{{\ez<d(x, e)\le 2\ez}}
|k_{f(\sqrt\bigtriangleup)}(x)|\, d\rho(x)=0$; \vspace{-0.25cm}
 \item[\rm(ii)] for all $\ez\in(0, 1)$ such that $\ez<r$,
\begin{equation}\label{small-epsilon}
\dint_{{\ez<d(x, e)\le 2\ez}} |k_{f(\sqrt\bigtriangleup)}(x)|\,
d\rho(x) \le
C\ez^{(n+1)/2}\lf\{\dint_0^\fz|f(s)|^2(s^2+s^n)\,ds\r\}^{1/2};
\end{equation}
\vspace{-0.3cm}
 \item[\rm(iii)] for all $\ez\in[1, \fz)$ such that  $\ez<r$,
\begin{equation}\label{large-epsilon}
\dint_{{\ez<d(x, e)\le 2\ez}} |k_{f(\sqrt\bigtriangleup)}(x)|\,
d\rho(x) \le
C\ez^{3/2}\lf\{\dint_0^\fz|f(s)|^2(s^2+s^n)\,ds\r\}^{1/2};
\end{equation}
\vspace{-0.3cm}
\item[\rm(iv)] when $n=1$, the right-hand sides of \eqref
{small-epsilon} and \eqref{large-epsilon}
can be  respectively replaced by the better estimates:
$C\ez^{(n+1)/2}\lf\{\int_0^\fz|f(s)|^2\min\{s, s^2\}\,ds\r\}^{1/2}$ and
$$C\ez^{3/2}\lf[\int_0^\fz|f(s)|^2\min\{s, s^2\}\,ds\r]^{1/2}.$$
\end{enumerate}
\end{lem}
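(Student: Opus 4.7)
The starting point is the factorization $k_{f(\sqrt\bigtriangleup)}=\dz^{1/2}\,k_{f(\sqrt{\cl_n})}$ from \eqref{sphericaltransform-kernel}, which combined with the evenness of $f$ gives $\mathcal{H}k_{f(\sqrt{\cl_n})}(s)=f(s)$. Using $\mathcal{H}=\mathcal{F}\circ\mathcal{A}$, we have $\mathcal{A}k_{f(\sqrt{\cl_n})}=\mathcal{F}^{-1}f$, which is supported in $[-r,r]$ by the hypothesis on $\widehat f$. Feeding this support information into the inverse Abel transform formulas \eqref{abel-even} and \eqref{abel-odd} yields $k_{f(\sqrt{\cl_n})}(t)=0$ for all $t>r$: in the even-$n$ case $\mathcal A^{-1}$ is the local differential operator $(2\pi)^{-n/2}(-\tfrac1{\sinh t}\partial_t)^{n/2}$; in the odd-$n$ case $\mathcal A^{-1}g(t)$ involves only values $g(s)$ with $s\ge t$. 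Hence $k_{f(\sqrt\bigtriangleup)}$ is supported in $\{x\in S:\,d(x,e)\le r\}$, proving~(i).

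For (ii) and (iii), the radiality of $k_{f(\sqrt{\cl_n})}$ lets me apply the integration formula \eqref{rho-formula} to the radial function $|k_{f(\sqrt{\cl_n})}|\,\chi_{\{\ez<d(\cdot,e)\le 2\ez\}}$, which gives
\[
\int_{\ez<d(x,e)\le 2\ez}|k_{f(\sqrt\bigtriangleup)}(x)|\,d\rho(x)=\int_\ez^{2\ez}\phi_0(t)\,|k_{f(\sqrt{\cl_n})}(t)|\,A(t)\,dt.
\]
A Cauchy--Schwarz inequality in the measure $A(t)\,dt$ bounds this by
\[
\lf(\int_\ez^{2\ez}\phi_0(t)^2 A(t)\,dt\r)^{1/2}\lf(\int_0^\fz|k_{f(\sqrt{\cl_n})}(t)|^2 A(t)\,dt\r)^{1/2}.
\]
Combining \eqref{phi0} and \eqref{misA} yields $\phi_0(t)^2 A(t)\ls t^n(1+t)^{2-n}$; integrating over $[\ez,2\ez]$ produces a bound $\ls\ez^{n+1}$ when $\ez<1$ and $\ls\ez^{3}$ when $\ez\ge 1$, and this is exactly the source of the transition between the $\ez^{(n+1)/2}$ bound of \eqref{small-epsilon} and the $\ez^{3/2}$ bound of \eqref{large-epsilon}. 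For the second factor, the integration formula \eqref{lambda-formula} rewrites it as $\|k_{f(\sqrt{\cl_n})}\|_{L^2(\lz)}$, which by Plancherel \eqref{plancherel-formula} together with $\mathcal{H}k_{f(\sqrt{\cl_n})}=f$ equals, up to a constant, $\int_0^\fz |f(s)|^2\,|\mathbf{c}(s)|^{-2}\,ds$; the Plancherel-measure bounds \eqref{plancherel-measure} then yield the weight $s^2+s^n$ appearing on the right-hand sides of \eqref{small-epsilon} and \eqref{large-epsilon}.

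Part (iv) is the same argument, with \eqref{plancherel-measure'} replacing \eqref{plancherel-measure} in the Plancherel step; for $n=1$ this turns the $s^2+s$ weight into $\min\{s,s^2\}$. The main technical point throughout is the calibrated splitting in Cauchy--Schwarz: one must not split by volume (which behaves like $e^{n\ez}$ for $\ez\ge 1$ and would destroy the large-$\ez$ estimate) but by $\phi_0^2\,A$, whose exponential factors $e^{-nt}$ and $e^{nt}$ cancel and leave only polynomial growth $t^2$. A secondary subtlety is verifying that the inverse Abel transform \emph{propagates support forward}, i.e., that $\mathcal{A}k$ vanishing on $(r,\fz)$ forces $k$ to vanish there as well; this is immediate from locality in the even-$n$ case and from the shape of the integral \eqref{abel-odd} in the odd-$n$ case.
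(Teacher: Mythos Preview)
Your proof is correct and follows essentially the same strategy as the paper: support of $k_{f(\sqrt{\cl_n})}$ via the inverse Abel transform for (i), then for (ii)--(iv) a Cauchy--Schwarz split followed by the Plancherel formula \eqref{plancherel-formula} and the Plancherel-measure bounds \eqref{plancherel-measure}, \eqref{plancherel-measure'}. The only cosmetic difference is in the Cauchy--Schwarz weight: the paper inserts the weight $w(x)=\dz^{-1/2}(x)e^{nd(x,e)/2}$ before passing to radial coordinates, which puts an extra $(1+\ez)$ into the $J$-factor and correspondingly one less into the $I$-factor, whereas you first pass to the radial integral $\int_\ez^{2\ez}\phi_0\,|k_{f(\sqrt{\cl_n})}|\,A\,dt$ and split symmetrically against $\phi_0$ in the measure $A\,dt$; the two splits are equivalent up to a factor of $(1+t)\approx\phi_0(t)e^{nt/2}$ and yield identical final bounds.
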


\begin{proof}
Let $k_{f(\sqrt{\cl_n}\,)}$ be the {\em convolution kernel}
of the operator $f(\sqrt{\cl_n}\,)$.
By \eqref{sphericaltransform-kernel}, we know that
$k_{f(\sqrt{\cl_n}\,)}$ is radial on $S$,
$k_{f(\sqrt \triangle\,)}= \dz^{1/2} k_{f(\sqrt{\cl_n}\,)}$
and ${\mathcal H } k_{f(\sqrt{\cl_n}\,)} (t) = f(t)$ for all $t\in\rr^+.$
Since $\mathcal H^{-1}=\mathcal A^{-1}\circ \mathcal F^{-1}$ and $f$ is even,
we obtain
\begin{eqnarray}\label{e1}
k_{f(\sqrt{\cl_n}\,)} (t) = { \mathcal H}^{-1} f(t)
= {\mathcal A}^{-1} { \mathcal F}^{-1} f (t)={\mathcal A}^{-1} (\widehat f\,)(t).
\end{eqnarray}
Notice that $\supp \widehat f \subset[-r, r]$ and
the inverse formulae for the Abel transform \eqref{abel-even} and \eqref{abel-odd}
imply that {$\supp {\mathcal A}^{-1} (\widehat f\,) \subset B(e,r)$.}
Then, $\supp k_{f(\sqrt{\cl_n}\,)} \subset B(e, r)$, and hence $\supp k_{f(\sqrt \triangle\,)}\subset B(e, r)$.
So the integral of $ k_{f(\sqrt \triangle\,)}$ in the domain
$\{x\in S:\, \ez<d(x, e)\le2\ez\}$
is $0$ when $\ez\ge r$. This proves (i).

To prove (ii) and (iii), we set
$w(x) \equiv  \dz^{-1/2}(x) e^{nd(x, e)/2}$ for all $x\in S$.
Applying H\"older's inequality yields that
\begin{eqnarray*}
\quad
&&\dint_{{\ez<d(x, e)\le 2\ez}} |k_{f(\sqrt\bigtriangleup\,)}(x)|\,
d\rho(x)\\
&&\quad\le \lf\{
\dint_{{\ez<d(x, e)\le 2\ez}} w(x)^{-1}\,
d\rho(x)
\r\}^{1/2}\lf\{\dint_{{\ez<d(x, e)\le 2\ez}} |k_{f(\sqrt\bigtriangleup\,)}(x)|^2\,
w(x)\,d\rho(x)\r\}^{1/2}\nonumber\\
&&\hs \equiv  {\rm I}^{1/2} \cdot {\rm J}^{1/2},
\end{eqnarray*}
where we denoted by ${\rm I}$ and ${\rm J}$ respectively the integral in the
first and second bracket.
Recall that if $x=(y, a)\in S$ with $y\in\mathbb R^n$ and
$a\in\mathbb R^+$, then $\delta(x)=\delta(y,a)=a^{-n}$.
When $\ez\in(0, 1]$, for all $d(x,e)\le 2\ez$, we have $|\dz(x)|\ls \ez\ls1$, and hence
\begin{eqnarray*}
{\rm I} &&=\dint_{{\ez<d(x, e)\le 2\ez}} \dz^{1/2}(x) e^{-nd(x, e)/2}\,
d\rho(x)\ls\rho\big(B(e, 2\ez)\big) \ls \ez^{n+1}.
\end{eqnarray*}
When $\ez>1$, by \eqref{rho-formula} together with the
estimates \eqref{phi0} and \eqref{misA} of $\phi_0$ and
of the density function $A$, we obtain
\begin{eqnarray*}
{\rm I}&&\leq \int_{d(x, e)\le 2\ez} \dz^{1/2}(x) e^{-nd(x, e)/2} d\rho(x)\\
&&=\int_0^{2\ez} \phi_0(t) e^{-nt/2} A(t)\,dt
\ls\int_0^{2\ez} (1+t)\bigg(\frac t{1+t}\bigg)^{n}\,dt\ls\ez^2.
\end{eqnarray*}
To estimate ${\rm J}$, since $k_{f(\sqrt \triangle\,)}=
\dz^{1/2} k_{f(\sqrt{\cl_n}\,)}$, we have
\begin{eqnarray*}
{\rm J}&&= \dint_{{\ez<d(x, e)\le 2\ez}} \dz(x)|k_{f(\sqrt\cl_n\,)}(x)|^2\,
\dz^{-1/2}(x) e^{nd(x, e)/2} \,d\rho(x).
\end{eqnarray*}
Again, using \eqref{rho-formula} and the estimates \eqref{phi0} and \eqref{misA},
we  estimate $\rm J$ by
\begin{eqnarray*}
{\rm J}&&= \int_\ez^{2\ez} \phi_0(t) |k_{f(\sqrt\cl_n\,) }(t)|^2 e^{nt/2} A(t)\,dt\\
&&\ls (1+\ez) \int_\ez^{2\ez} |k_{f(\sqrt\cl_n\,) }(t)|^2  A(t)\,dt
\ls (1+\ez) \int_S |k_{f(\sqrt\cl_n\,) }(x)|^2\,d\lz(x),
\end{eqnarray*}
where the last inequality is due to \eqref{lambda-formula}.  Applying the Plancherel
formula \eqref{plancherel-formula}
and the estimate for the Plancherel measure \eqref{plancherel-measure}
(when $n=1$ we use \eqref{plancherel-measure'} instead)
yields that
\begin{eqnarray}\label{e2}
\int_S |k_{f(\sqrt\cl_n\,) }(x)|^2\,d\lz(x)
&&\approx\int_0^\infty |{\mathcal H}k_{f(\sqrt\cl_n\,)} (t) |^2 |{\mathbf c} (t)|^{-2}\,dt\nonumber\\
&&\approx\int_0^\infty |f(t)|^2 |{\mathbf c} (t)|^{-2}\,dt\ls\int_0^\infty |f(t)|^2 (t^2+t^n)\,dt,
\end{eqnarray}
which implies that
${\rm J}\ls (1+\ez) \int_0^\infty |f(t)|^2 (t^2+t^n)\,dt.$
Combining the estimate of ${\rm I}$ and ${\rm J}$ yields (ii) and (iii).

The proof for (iv) follows from the same argument
except that in \eqref{e2} we use \eqref{plancherel-measure'}
instead of \eqref{plancherel-measure}.
\end{proof}

The following decomposition of functions with compact support was
proved in \cite[Lemma~1.3]{H98}; see also
\cite[Lemma~4.6]{va2}.

\begin{lem}\label{l5.2}
Let $q,\,Q\in[0, \infty)$. Suppose that $f\in H^s(\rr)$ and
$\supp f\subset[1/2, 2]$. Then there exist even functions
$\{f_\ell\}_{\ell=0}^\fz$ and a positive constant $C$, independent of $f$ and $\ell$,
such that for all $\tau\in\rr^+$,
\begin{enumerate}
\vspace{-0.2cm}
 \item[(i)] $f(\tau\cdot)=\sum_{\ell=0}^\fz f_{\ell, \tau}(\cdot)$ on $\rr^+$,
where $f_{\ell, \tau}(\cdot) \equiv  f_\ell(\tau\cdot)$ and
 $\supp \widehat {f_{\ell, \tau}}\subset [-2^\ell \tau, 2^\ell \tau]$;
\vspace{-0.25cm}
 \item[(ii)] for all $\ell\ge0$ and $\tau\in[1, \fz)$,
\begin{equation}\label{large-dilation}
\int_0^\fz|f_{\ell, \tau}(\xi)|^2(\xi^{2q}+\xi^{2Q})\,d\xi
\le C \tau^{-2\min\{q, Q\}-1}2^{-2s\ell}\|f\|_{H^s(\rr)}^2;
\end{equation}
\vspace{-0.5cm}
\item[(iii)] for all $\ell\ge0$ and $\tau\in(0,1)$,
\begin{equation}\label{small-dilation}
\int_0^\fz|f_{\ell, \tau}(\xi)|^2(\xi^{2q}+\xi^{2Q})\,d\xi
\le C \tau^{-2\max\{q, Q\}-1}2^{-2s\ell}\|f\|_{H^s(\rr)}^2.
\end{equation}
\vspace{-0.5cm}
\end{enumerate}
\end{lem}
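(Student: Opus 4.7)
The plan is to produce the $f_\ell$ by a Littlewood--Paley decomposition in the frequency variable, applied to the even extension of $f$, and then to extract the weighted $L^2$ estimates by combining Plancherel, Leibniz, and the compact support of $f$. First, I would extend $f$ to an even function $\bar f(\xi)\equiv f(|\xi|)$ on $\rr$, which lies in $H^s(\rr)$ with $\|\bar f\|_{H^s}\approx\|f\|_{H^s}$. Picking an even $\eta\in C_c^\fz(\rr)$ with $\eta\equiv 1$ on $[-1/2,1/2]$ and $\supp\eta\subset[-1,1]$, I set $\chi_0\equiv\eta$ and $\chi_\ell(\xi)\equiv\eta(\xi/2^\ell)-\eta(\xi/2^{\ell-1})$ for $\ell\ge1$; this is a smooth even partition of unity on $\rr$ with $\supp\chi_\ell\subset[-2^\ell,2^\ell]$, with $\supp\chi_\ell\subset\{2^{\ell-2}\le|\xi|\le 2^\ell\}$ for $\ell\ge1$, and with $\|\chi_\ell^{(k)}\|_\fz\ls 2^{-k\ell}$ for every $k\in\nn$. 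Define $f_\ell$ by $\widehat{f_\ell}\equiv\chi_\ell\widehat{\bar f}$; then $f_\ell$ is even, $\supp\widehat{f_\ell}\subset[-2^\ell,2^\ell]$, and $\sum_{\ell\ge0}f_\ell=\bar f$ in $L^2(\rr)$, so $\sum_{\ell\ge0}f_\ell=f$ on $\rr^+$. The scaling identity $\widehat{f_{\ell,\tau}}(\xi)=\tau^{-1}\widehat{f_\ell}(\xi/\tau)$ then gives $\supp\widehat{f_{\ell,\tau}}\subset[-2^\ell\tau,2^\ell\tau]$, which proves (i).

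For (ii) and (iii), the evenness of $f_\ell$ and the change of variable $u=\tau\xi$ yield
\[
\dint_0^\fz|f_{\ell,\tau}(\xi)|^2\xi^{2p}\,d\xi=\tfrac12\,\tau^{-2p-1}\dint_\rr|f_\ell(u)|^2|u|^{2p}\,du,\qquad p\in\{q,Q\}.
\]
Thus the problem reduces to the $\tau$-independent estimate
\[
\dint_\rr|f_\ell(u)|^2|u|^{2p}\,du\ls 2^{-2s\ell}\|f\|_{H^s}^2\qquad\forall\,p\ge 0.
\]
Granting this, for $\tau\ge 1$ one has $\tau^{-2p-1}\le\tau^{-2\min\{q,Q\}-1}$ whenever $p\in\{q,Q\}$, and summing the contributions $p=q$ and $p=Q$ gives \eqref{large-dilation}; for $\tau\in(0,1)$ one has $\tau^{-2p-1}\le\tau^{-2\max\{q,Q\}-1}$, which yields \eqref{small-dilation} analogously.

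To prove the weighted estimate for integer $p$, I would use Plancherel to write $\int_\rr|u^p f_\ell|^2\,du=\|\partial^p\widehat{f_\ell}\|_{L^2}^2$ and expand $\partial^p(\chi_\ell\widehat{\bar f})$ via Leibniz; each resulting term has the form $\partial^{p-k}\chi_\ell\cdot\partial^k\widehat{\bar f}$ with $\|\partial^{p-k}\chi_\ell\|_\fz\ls 2^{-(p-k)\ell}$. Since $\bar f$ is compactly supported in $[-2,2]$, the function $u^k\bar f$ again belongs to $H^s$ with norm $\ls\|f\|_{H^s}$, so $\partial^k\widehat{\bar f}=\widehat{(-iu)^k\bar f}$ inherits the annular $L^2$ bound $\|\mathbf{1}_{\supp\chi_\ell}\,\partial^k\widehat{\bar f}\|_{L^2}\ls 2^{-s\ell}\|f\|_{H^s}$ (using $|\xi|\ge 2^{\ell-2}$ on $\supp\chi_\ell$ for $\ell\ge 1$; the case $\ell=0$ is trivial since $2^{-s\cdot 0}=1$). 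Summing over $k$ gives $\|\partial^p\widehat{f_\ell}\|_{L^2}\ls 2^{-s\ell}\|f\|_{H^s}$, which is the desired bound for integer $p$. For non-integer $p$, the elementary pointwise inequality $|u|^{2p}\le 1+|u|^{2\lceil p\rceil}$ reduces the estimate to the integer cases $p=0$ and $p=\lceil p\rceil$.

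The main obstacle is precisely this weighted estimate: one must simultaneously exploit the compactness of $\supp\bar f$ (so that polynomial weights $u^k$ do not destroy the $H^s$-norm of $\bar f$) and the dyadic frequency localisation of $\chi_\ell$ (so that the gain $2^{-s\ell}$ can actually be extracted from the $H^s$ datum). Once this interplay is handled at integer orders, the passage to non-integer $p$ and the scaling reductions producing (ii) and (iii) are routine.
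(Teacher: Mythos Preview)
Your proposal is correct. The paper itself does not give a proof of this lemma but simply cites \cite[Lemma~1.3]{H98} and \cite[Lemma~4.6]{va2}; the Littlewood--Paley construction you describe (even extension of $f$, dyadic frequency cutoffs $\chi_\ell$, Plancherel plus Leibniz to extract the $2^{-s\ell}$ decay, and the pointwise bound $|u|^{2p}\le 1+|u|^{2\lceil p\rceil}$ to reduce to integer weights) is precisely the standard argument behind those references, so your approach matches the intended one.
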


\begin{prop}\label{p5.1} Let $m$ satisfy a mixed Mihlin-H\"ormander condition of order $(s_0,
s_\fz)$ with $s_0>3/2$ and $s_\fz>\max\{3/2, (n+1)/2\}$.
Then $k_{m(\bigtriangleup)}$ satisfies \eqref{size-convolution-ker}.
\end{prop}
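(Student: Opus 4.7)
\noindent\emph{Proof plan for Proposition \ref{p5.1}.}\quad The strategy is to decompose $k_{m(\bigtriangleup)}$ twice---first at the Mihlin--H\"ormander dyadic scale via \eqref{oper-decom}, then more finely via Lemma \ref{l5.2}---so as to produce pieces whose convolution kernels are spatially localized near $e$, and to then estimate the $\rho$-mass of each piece on the annulus $\{x\in S:\epsilon<d(x,e)\le 2\epsilon\}$ using Lemma \ref{l5.1}.

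Concretely, set $\wz m_j(s)\equiv m_j(s^2)$; this is an even function supported in $[-2,-1/2]\cup[1/2,2]$ that satisfies $\wz m_j(2^{-j/2}\sqrt{\bigtriangleup})=m_j(2^{-j}\bigtriangleup)$. I apply Lemma \ref{l5.2} to $\wz m_j$ with $\tau=2^{-j/2}$ to obtain $m_j(2^{-j}\bigtriangleup)=\sum_{\ell\ge 0} F_{j,\ell}(\sqrt{\bigtriangleup})$, where each $F_{j,\ell}$ is even with $\supp\widehat{F_{j,\ell}}\subset[-2^{\ell-j/2},2^{\ell-j/2}]$. Combining with \eqref{oper-decom} yields $k_{m(\bigtriangleup)}=\sum_{j\in\zz}\sum_{\ell\ge 0}k_{F_{j,\ell}(\sqrt{\bigtriangleup})}$, and by Lemma \ref{l5.1}(i) only pairs with $2^{\ell-j/2}>\epsilon$ contribute to the annulus in question. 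For these pairs I apply Lemma \ref{l5.1}(ii) when $\epsilon<1$ and (iii) when $\epsilon\ge 1$, coupled with Lemma \ref{l5.2}(ii) for $j\le 0$ (so $\tau=2^{-j/2}\ge 1$, using $\|\wz m_j\|_{H^{s_0}}\ls\|m\|_{s_0}$) and Lemma \ref{l5.2}(iii) for $j>0$ (using $\|\wz m_j\|_{H^{s_\infty}}\ls\|m\|_{s_\infty}$), choosing $q=1$ and $Q=n/2$. Each piece is then bounded by an explicit product of a power of $\epsilon$, a power of $2^j$, and $2^{-s_0\ell}$ or $2^{-s_\infty\ell}$.

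Summing over $\ell\ge\max\{0,\lceil\log_2\epsilon+j/2\rceil\}$ and then over $j\in\zz$, I split by the sign of $j$: the hypothesis $s_0>3/2$ makes the geometric series converge (with the powers of $\epsilon$ canceling) for the $j\le 0$ contribution, while $s_\infty>(n+1)/2$ does the same for the $j>0$ contribution when $n\ge 2$. The main obstacle is the borderline case $n=1$ with $\epsilon<1$: here the Plancherel bound $|\mathbf c(s)|^{-2}\ls s^2+s$ is too crude on the regime $|s|\le 1$, and the naive combination of Lemma \ref{l5.1}(ii) with Lemma \ref{l5.2}(iii) produces a spurious factor $\epsilon^{-1/2}$ that the $j$-sum cannot absorb. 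I would circumvent this by invoking the sharper Plancherel estimate \eqref{plancherel-measure'}, which upgrades Lemma \ref{l5.1} to the inequality (iv) with weight $\min\{s,s^2\}\le s$; pairing this with Lemma \ref{l5.2} for the improved pair $q=Q=1/2$ gives a smaller power of $\tau^{-1}=2^{j/2}$, restoring the cancellation, and the hypothesis $s_\infty>3/2$ then ensures convergence of the $j$-sum.
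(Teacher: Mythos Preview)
Your approach is essentially the paper's: decompose via \eqref{oper-decom} and Lemma~\ref{l5.2}, then estimate each piece on the annulus by Lemma~\ref{l5.1}. The paper organizes the sum a bit differently: it first splits off the range $\{j:2^{j/2}\epsilon\ge 1\}$ and disposes of it directly by the Hebisch--Steger weighted estimate \eqref{size-ker} (on the annulus one inserts $(1+2^{j/2}\epsilon)^{-\sigma}(1+2^{j/2}d(x,e))^\sigma\ge 1$ and gets a convergent geometric sum in $j$), and only the complementary range $2^{j/2}\epsilon<1$ is treated via Lemmas~\ref{l5.1}--\ref{l5.2}. This shortcut prunes several sub-cases you would otherwise have to check.

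There is a genuine gap in your $n=1$ discussion. You correctly flag the case $n=1$, $\epsilon<1$, $j>0$ and repair it with $\min\{s,s^2\}\le s$ (that is, $q=Q=1/2$). But the case $n=1$, $\epsilon\ge 1$, $j\le 0$ is equally problematic and you have not addressed it: with your original choice $q=1$, $Q=n/2=1/2$ one has $\min\{q,Q\}=1/2$, so Lemma~\ref{l5.2}(ii) only yields $\tau^{-2}$ (i.e.\ $2^{j/2}$ after the square root), and the resulting bound $\epsilon^{3/2}\cdot 2^{j/2}$ summed over $j<-2\log_2\epsilon$ gives $\epsilon^{1/2}$, which is unbounded as $\epsilon\to\infty$. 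Your proposed fix $q=Q=1/2$ has the same $\min$ and does not help here. The correct remedy is to use the \emph{other} half of Lemma~\ref{l5.1}(iv), namely $\min\{s,s^2\}\le s^2$, corresponding to $q=Q=1$; then Lemma~\ref{l5.2}(ii) gives $\tau^{-3}$, i.e.\ $2^{3j/4}$, and the powers of $\epsilon$ cancel exactly as in your $n\ge 2$ computation. (This is precisely what the paper does for its term $J_3$.) So for $n=1$ one must use both sides of $\min\{s,s^2\}$: the bound by $s$ when $j>0$ (where Lemma~\ref{l5.2}(iii) penalizes large $\max\{q,Q\}$) and the bound by $s^2$ when $j\le 0$ (where Lemma~\ref{l5.2}(ii) rewards large $\min\{q,Q\}$).
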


\begin{proof}
Let $m_j$ be as in \eqref{m_j}. By \eqref{oper-decom}, we obtain that for
all $x\in S$ and $\ez>0$,
\begin{eqnarray*}
&&\dint_{\ez<d(x, e)\le2\ez}\lf|k_{m(\bigtriangleup)}(x)\r|\,d\rho(x)\\
&&\hs=\sum_{\{j\in\zz:\,2^{j/2}\ez\ge1\}}
\dint_{\ez<d(x, e)\le2\ez}\lf|k_{m_j(2^{-j}\bigtriangleup)}(x)\r|\,d\rho(x)\\
&&\hs\quad+
\sum_{\{j\in\zz:\,2^{j/2}\ez<1\}} { \dint_{\ez<d(x, e)\le2\ez} }
\lf|k_{m_j(2^{-j}\bigtriangleup)}(x)\r|\,d\rho(x)\equiv  {\rm I} +{\rm J}.
\end{eqnarray*}
Observe that, by \eqref{kernelrelation}, $K_{m_j(2^{-j}\bigtriangleup)}(x, e)=k_{m_j(2^{-j}\bigtriangleup)}(x)$
for all $x\in S$.  From this and \eqref{size-ker}, it follows that
\begin{eqnarray*}
{\rm I}&&= \sum_{\{j\in\zz:\,2^{j/2}\ez\ge1\}}
\dint_{\ez<d(x, e)\le2\ez}\lf|K_{m_j(2^{-j}\bigtriangleup)}(x, e)\r|\,d\rho(x)\\
&&\ls\sum_{\{j\in\zz:\,2^{j/2}\ez\ge1\}}
\dfrac{1}{(1+2^{j/2}\ez)^\sz} \dint_{\ez<d(x, e)\le2\ez}
|K_{m_j(2^{-j}\bigtriangleup)}(x, e)| (1+2^{j/2}d(x, e))^\sz\,
d\rho(x)\ls1.
\end{eqnarray*}
To estimate ${\rm J}$, for each $j\in\zz$, set $f^{(j)}(t) \equiv  m_j(t^2)$ for all $t\in
\rr^+$. Since $\supp f^{(j)}\subset [1/2, 2]$, we use Lemma
\ref{l5.1} to decompose each $f^{(j)}$. Hence, there exists a
sequence of even functions $\{f^{(j)}_\ell\}_{\ell=0}^\infty$ such
that $f^{(j)}(2^{-j/2}\cdot)=\sum_{\ell=0}^\fz f^{(j)}_{\ell,
2^{-j/2}}(\cdot)$ on $\rr^+$ and the Fourier transform of each $f^{(j)}_{\ell,
2^{-j/2}}$ is supported in $[-2^{\ell-j/2}, 2^{\ell-j/2}]$. For
simplicity, in the sequel, we denote $f^{(j)}_{\ell, 2^{-j/2}}$ by
$h_{\ell, j}$. Notice that
$m_j(2^{-j}\bigtriangleup)=f^{(j)}(2^{-j/2}\sqrt\bigtriangleup)$.
Summarizing all these facts, we obtain that for all $j\in\zz$,
\begin{eqnarray}\label{decom-convolution-ker}
\dint_{\ez<d(x,
e)\le2\ez}\lf|k_{m_j(2^{-j}\bigtriangleup)}(x)\r|\,d\rho(x)
&&=\sum_{\ell=0}^\fz\dint_{\ez<d(x, e)\le2\ez} \lf|k_{h_{\ell,
j}(\sqrt\bigtriangleup)}(x)\r|\,d\rho(x).
\end{eqnarray}
By the support condition of $\widehat{h_{\ell, j}}$,
we have $\supp k_{h_{\ell, j}(\sqrt\bigtriangleup)}
\subset B(e, 2^{\ell-\frac j2})$. By this and Lemma
\ref{l5.2}(i), the sum  in
\eqref{decom-convolution-ker} reduces to the sum
for $\ell$ satisfying $2^{\ell-j/2}>\ez$,
therefore,
$${\rm J}=\sum_{\{j\in\zz:\,2^{j/2}\ez<1\}}\
\sum_{\{\ell\in\nn:\,2^{\ell-j/2}>\ez\}} \dint_{\ez<d(x,
e)\le2\ez}\lf|k_{h_{\ell, j}(\sqrt\bigtriangleup)}(x)\r|\,d\rho(x).$$
Set
$$ {\rm W}_1 \equiv \{(j, \ell):\, 2^{j/2}\ez<1,\,2^{\ell-j/2}>\ez,\,
\ell\ge0,\, j>0,\, 2^{\ell-j/2}\ge 1\},$$
$${\rm W}_2 \equiv \{(j, \ell):\, 2^{j/2}\ez<1,\,2^{\ell-j/2}>\ez,\,
\ell\ge0,\, j>0,\, 2^{\ell-j/2}<1\},$$
$${\rm W}_3 \equiv \{(j, \ell):\, 2^{j/2}\ez<1,\,2^{\ell-j/2}>\ez,\,
\ell\ge0,\, j\le0,\, 2^{\ell-j/2}\ge 1\}$$
and
$${\rm W}_4 \equiv \{(j,
\ell):\, 2^{j/2}\ez<1,\,2^{\ell-j/2}>\ez,\, \ell\ge0,\, j\le0,\,
2^{\ell-j/2}< 1\}.$$
Observe
that ${\rm W}_4=\emptyset$.
Correspondingly, for all $k=1, 2, 3$, we set
$${\rm J}_k \equiv \sum_{(j, \ell)\in {\rm W}_k} \dint_{\ez<d(x, e)\le2\ez}
\lf|k_{h_{\ell, j}(\sqrt\bigtriangleup)}(x)\r|\,d\rho(x).$$  Thus, ${\rm J}=\sum_{k=1}^3{\rm J}_k$.

If $(j, \ell)\in {\rm W}_1$, then $j>0$ and hence $\ez<2^{-j/2}<1$. By
this, \eqref{small-epsilon}, \eqref{small-dilation} and the
assumption $s_\fz>\max\{3/2,
(n+1)/2\}$, we obtain
\begin{eqnarray*}
{\rm J}_1 &&\ls \sum_{j>0}\sum_{\{\ell\in\nn:\,2^{\ell-j/2}\ge
1\}}
\ez^{(n+1)/2} \lf\{\dint_0^\fz|h_{\ell, j}(t)|^2(t^2+t^n)\,dt\r\}^{1/2}\\
&&\ls \sum_{{j>0}}\sum_{\ell\ge j/2}
2^{\frac j2\max\{3/2, (n+1)/2\}} 2^{-s_\fz\ell}\|f^{(j)}\|_{H^{s_\fz}(\rr)}\\
&&\ls \sum_{j>0}\sum_{\ell\ge j/2}2^{\frac
j2(\max\{3/2, (n+1)/2\}-s_\fz)} 2^{-s_\fz(\ell-j/2)}
\|m\|_{s_\fz}\ls\|m\|_{s_\fz}.
\end{eqnarray*}

If $(\ell, j)\in {\rm W}_2$, then $\ez\in(0, 1)$. Applying
\eqref{small-epsilon} and \eqref{small-dilation}
yields that when $n\ge2$,
\begin{eqnarray*}
{\rm J}_2 &&\ls
\sum_{j=1}^{-2\log_2\ez}\sum_{\ell<j/2}
\ez^{(n+1)/2}\lf\{\dint_0^\fz|h_{\ell, j}(t)|^2(t^2+t^n)\,dt\r\}^{1/2}\\
&&\ls \sum_{j=1}^{-2\log_2\ez}\sum_{\ell=0}^\fz
\ez^{(n+1)/2}2^{\frac j2\max\{3/2,
(n+1)/2\}}2^{-s_\fz\ell}\|m\|_{s_\fz} \ls\|m\|_{s_\fz};
\end{eqnarray*}
when $n=1$, by Lemma \ref{l5.1}(iv),
 we replace
 $\{\int_0^\fz|h_{\ell, j}(t)|^2(t^2+t^n)\,dt\}^{1/2}$
in the above estimate by $\{\int_0^\fz|h_{\ell, j}(t)|^2t\,dt\}^{1/2}$,
and then a similar argument also implies that ${\rm J}_2\ls\|m\|_{s_\fz}$.

Now we assume $n\ge2$ and estimate ${\rm J}_3$. In this case, when
$\ez\in[1, \fz)$, we use \eqref{large-epsilon}, \eqref{large-dilation} and
$2^{j/2}\ez<1$ to obtain
\begin{eqnarray}\label{J3-epsilon-large}
{\rm J}_3 &&\ls \sum_{\{j\le0:\,2^{j/2}\ez<1\}}\
\sum_{\ell=0}^\fz\ez^{3/2}
\lf\{\dint_0^\fz|h_{\ell, j}(t)|^2(t^2+t^n)\,dt\r\}^{1/2}\noz\\
&&\ls \sum_{\{j\le0:\,2^{j/2}\ez<1\}}\ \sum_{\ell=0}^\fz\ez^{3/2}
2^{3j/4} 2^{-s_0\ell}\|m\|_{s_0}\ls \|m\|_{s_0}.
\end{eqnarray}
When $\ez\in(0,1)$, applying \eqref{large-epsilon} and
\eqref{large-dilation} yields that
\begin{eqnarray}\label{J3-epsilon-small}
{\rm J}_3&&\ls \sum_{j=-\fz}^0 \sum_{\ell=0}^\fz\ez^{(n+1)/2}
\lf\{\dint_0^\fz|h_{\ell, j}(t)|^2(t^2+t^n)\,dt\r\}^{1/2}\noz\\
&&\ls \sum_{j=-\fz}^0 \sum_{\ell=0}^\fz2^{3j/4}
2^{-s_0\ell}\|m\|_{s_0} \ls \|m\|_{s_0}.
\end{eqnarray}
If $n=1$, Lemma \ref{l5.1}(iv) implies that
we can replace $\{\int_0^\fz|h_{\ell, j}(t)|^2(t^2+t^n)\,dt\}^{1/2}$
in \eqref{J3-epsilon-large} and \eqref{J3-epsilon-small}
with $\{\int_0^\fz|h_{\ell, j}(t)|^2t^2\,dt\}^{1/2}$,
and a similar argument also yields ${\rm J}_3\ls\|m\|_{s_0}$.

Combining the estimates of ${\rm J}_1$ through ${\rm J}_3$ yields
${\rm J}\ls1$. From this and the estimate of ${\rm I}$, we deduce
that $k_{m(\bigtriangleup)}$ satisfies \eqref{size-convolution-ker},
which completes the proof.
\end{proof}

\begin{proof}[Proof of Theorem \ref{t5.1}]
By Proposition \ref{p5.1},
the $L^2$-boundedness of $m(\Delta)$ and the fact that the kernel of $m(\Delta)$ satisfies H\"ormander's
condition \eqref{4.2}, the operator $m(\Delta)$ satisfies all the assumptions of Theorem \ref{t4.2}.
Then $(m(\bigtriangleup))^\ast$ satisfies the desired properties.
\end{proof}

\medskip

{\small\noindent{\bf Acknowledgements}\quad The authors would like to
thank the referee very much for her/his very carefully reading
and many valuable remarks which made this article more readable.}

\bigskip

{\sc Liguang Liu}

School of Mathematical Sciences, Beijing Normal University,
Laboratory of Mathematics and Complex Systems, Ministry of
Education, Beijing 100875, China

\smallskip

({\it Present address}) Department of Mathematics,
School of Information,
Renmin University
of China, Beijing 100872, People's Republic of China

{\it E-mail}: \texttt{liuliguang@ruc.edu.cn}

\medskip

{\sc Maria Vallarino}

Dipartimento di Matematica,
Politecnico di Torino,
Corso Duca degli Abruzzi 24,
10129 Torino,
Italy

{\it E-mail}: \texttt{maria.vallarino@polito.it}

\medskip

{\sc Dachun  Yang} (Corresponding author)

School of Mathematical Sciences, Beijing Normal University,
Laboratory of Mathematics and Complex Systems, Ministry of
Education, Beijing 100875, China

{\it E-mail}: \texttt{dcyang@bnu.edu.cn}

\end{document}